\documentclass[12pt,reqno]{amsart}
\usepackage{amssymb}
\usepackage{mathrsfs,mathtools}
\usepackage{palatino}
\usepackage{bbm}
\usepackage{pictex}
\usepackage{amscd}
\usepackage{tikz-cd}
\usepackage{latexsym}
\usepackage{amssymb}
\usepackage{enumitem}
\usepackage{eucal}
\usepackage{MnSymbol}
\usepackage{verbatim}
\usepackage{bbm}
\usepackage{hyperref}
\hypersetup{colorlinks,linkcolor=blue,urlcolor=cyan,citecolor=red}

\newcommand{\CC}{\mathbb{C}}

\newcommand{\NN}{\mathbb{N}}

\newcommand{\ZZ}{\mathbb{Z}}

\newcommand{\cP}{\mathcal{P}}

\newcommand{\Mat}{{\rm Mat}}
\newcommand{\Sym}{{\rm Sym}}
\newcommand{\Y}{{\rm \bf{Y}}}
\newcommand{\W}{{\rm \bf{W}}}
\newcommand{\E}{{\rm \bf{E}}}
\newcommand{\e}{{\rm \bf{e}}}
\newcommand{\U}{{\rm \bf{U}}}

\newcommand{\fg}{\mathfrak{g}}

\newcommand{\fn}{\mathfrak{n}}

\newcommand{\fsl}{\mathfrak{sl}}

\newcommand{\kk}{\mathbbm{k}}

\newcommand{\gl}{\mathfrak{gl}}

\newcommand{\mul}{\mathrm{mul}}
\newcommand{\unl}{\underline}

\numberwithin{equation}{section}
\newtheorem{Theorem}{Theorem}[section]
\newtheorem{Lemma}[Theorem]{Lemma}
\newtheorem{Corollary}[Theorem]{Corollary}
\newtheorem{Proposition}[Theorem]{Proposition}
\newtheorem{Remark}[Theorem]{Remark}
\newtheorem{Definition}[Theorem]{Definition}

\begin{document}
\title[Shuffle algebra realization for modular Yangians]{Shuffle algebra realization for modular Yangians}
\author[Hao Chang, Hongmei Hu, \lowercase{and}  Yue Hu  ]{Hao Chang, Hongmei Hu, \lowercase{and} Yue Hu  }
\address[H. Chang]{School of Mathematics and Statistics, Central China
Normal University, Wuhan 430079, China}
\email{chang@ccnu.edu.cn}
\address[H. Hu]{Department of Mathematics, Shanghai Maritime University, Shanghai 201306, China}
\email{hmhu@shmtu.edu.cn}
\address[Y. Hu]{School of Mathematical Sciences, Beijing University of Posts and Telecommunications, Beijing 102206, China}
\email{huyue@bupt.edu.cn}
\date{\today}
\subjclass[2020]{Primary 17B37}
\makeatother
\begin{abstract}
We study the shuffle algebra realization of positive modular Yangians of classical type over an algebraically closed field of characteristic \(p>3\). We show that, unlike in characteristic zero, the natural shuffle homomorphism has a nontrivial kernel. Its image is characterized by a \(p\)-wheel condition, while its kernel is precisely the ideal generated by the \(p\)-th powers of the Lyndon root vectors. This identifies the corresponding quotient with the small Yangian arising from a \(\mathbb Z[\frac12]\)-integral form. As part of the construction, we establish a PBW basis for the integral form  and obtain the PBW theorem and \(p\)-center results for the Drinfeld presentation of modular Yangians.
\end{abstract}

\keywords{Modular Yangians, shuffle algebras, $p$-center, restricted Yangians, small Yangians}
\maketitle

\section{Introduction}

Shuffle algebras have long provided powerful tools in the study of quantum groups. A particularly fruitful recent development is the Feigin--Odesskii type shuffle algebra \cite{FO89,FO93,FO97,FO98} realization of quantum loop algebras and Yangians, which identifies their positive subalgebras in the Drinfeld presentation with an algebra of symmetric rational functions satisfying certain pole and wheel conditions \cite{Neg13,Tsy21,NT21,HT24,HT26}.  This shuffle algebra realization can be used to provide PBW bases and integral forms, and has also proven useful in the theory of integrable systems, via the construction of Lax matrices from shifted Yangians \cite{FPT22,FT22}, and in representation theory, via the development of category $\mathcal O$ and explicit realizations of simple modules for quantum loop algebras \cite{Neg26}.

The modular Yangian over an algebraically closed field $\kk$ of positive characteristic was studied by Brundan--Topley \cite{BT18} in type $A$, and by Chang--Hu \cite{CH25} in types $B,D$.  A fundamental feature of the modular theory is the existence of a large $p$-center.  In type $A$, Brundan--Topley described it in terms of Gaussian generators, and analogous results for the orthogonal Yangians were obtained in \cite{CH25}.  Thus the representation theory in positive characteristic differs substantially from the characteristic-zero case.  For the $A_1$-type Yangian $Y_2$, the finite-dimensional irreducible restricted representations were classified in \cite{CHT25}.

To advance the theory, it is natural to seek analogues of the powerful tools from modular Lie theory. There, the representation theory is approached via restricted enveloping algebras and the first Frobenius kernel, constructed using the Kostant $\ZZ$-form \cite{Hum77,Jan03}.  The quantum analogue was developed by Lusztig \cite{Lusztig90} and Chari--Pressley \cite{CP97} for quantum groups at roots of unity.  Our goal is to develop a corresponding framework for Yangians, with shuffle algebras providing the main tool.

The existing results for modular Yangians are mainly stated in the RTT presentation,
whereas shuffle algebras and Lyndon root vectors are naturally defined
in the Drinfeld presentation.  
For $p>3$, the Drinfeld presentation retains the same form as in characteristic zero, and the RTT--Drinfeld isomorphism follows from the Gauss decomposition using the characteristic-zero formulas of Jing--Liu--Molev \cite{JLM18}, cf. \cite[Theorem~3.1 and Remark~3.10]{CH25}.  
We use the RTT--Drinfeld
isomorphism to transfer the PBW theorem to the Drinfeld
presentation of the modular Yangian $Y_n(\kk)$. 
This gives a PBW basis
of $Y_n^>(\kk)$ in terms of the Lyndon root vectors $e_\beta(r)$. In types $A$, $B$, and $D$,
using the known Gaussian
$p$-center results of \cite{BT18,CH25}, we show
that
\[
 e_\beta(r)^p\in Z(Y_n(\kk))
 \qquad
 (\beta\in\Delta^+,\ r\in\NN).
\]
Thus the elements $e_\beta(r)^p$ generate a polynomial central
subalgebra of $Y_n^>(\kk)$.  The corresponding statement in type $C$
is formulated as a conjecture.

We next use modified specialization maps of \cite{HT24,HT26} to construct a
$\ZZ[\frac12]$-integral form $\Y_n^>$ of the positive Yangian
$Y_n^>(\CC)$, together with its PBW basis and shuffle algebra
realization.  From its reduction modulo $p$, we obtain the small
Yangian $\bar y_n^>(\kk)$.  We then study the natural homomorphism
\[
 \Psi_{\kk}:Y_n^>(\kk)\longrightarrow W^{(n)}(\kk)
\]
to the modular shuffle algebra.  Unlike in characteristic-zero
case, $\Psi_{\kk}$ is not an isomorphism.  We determine
its image using reduced specialization maps, and prove that
\[
 \ker(\Psi_{\kk})
 =
 \left\langle
 e_\beta(r)^p
 \mid
 \beta\in\Delta^+,\ r\in\NN
 \right\rangle.
\]
Consequently, $\Psi_{\kk}$ factors through the quotient
$Y_n^{>,[p]}$ and yields a commutative diagram of isomorphisms
\[
\begin{tikzcd}
Y^{>,[p]}_n
 \arrow[d, "\bar{\varphi}", "\cong"']
 \arrow[r, "\bar{\Psi}_{\kk}", "\cong"']
&
\tilde{W}^{(n)}(\kk)
\\
\bar{y}^{>}_n(\kk)
 \arrow[r, "\bar{\Psi}"', "\cong"]
&
\bar{W}^{(n)}(\kk)
 \arrow[u, "\phi", "\cong"']
\end{tikzcd}
\]
which identifies $Y_n^{>,[p]}$ with the small Yangian $\bar{y}^{>}_n(\kk)$ via their shuffle algebra realizations (cf. Theorem \ref{maintheorem}). 
In types $A$,
$B$, and $D$, this identifies the restricted positive Yangian with the small Yangian obtained from the
integral form, thereby providing a Yangian analogue of the modular Lie-theoretic
picture described above.

The paper is organized as follows:
\begin{itemize}[leftmargin=0.5cm]
    \item Section \ref{sec:preliminaries} collects the necessary preliminaries. We fix notation for classical simple Lie algebras and Lyndon words, recall the Drinfeld presentation of the positive Yangian and its shuffle algebra realization in characteristic zero, and review the Hall--Littlewood polynomials that will be used later.

\item Section \ref{sec:specialization} constructs a
$\ZZ[\frac12]$-integral form $\Y_n^>$, establishing its PBW basis
and shuffle algebra realization.  The main tool is a modification of
the specialization maps of \cite{HT24,HT26} that remains valid in positive characteristic.
   
   \item Section \ref{sec:modular-yangian-structure} uses the
RTT--Drinfeld isomorphism to transfer known results on modular
Yangians from the RTT realization to the Drinfeld presentation. 
In particular, we obtain the PBW theorem of $Y^{>}_n(\kk)$ in terms of Lyndon root vectors $e_{\beta}(r)$, and in types $A,B,D$ we show all the elements $e_\beta(r)^p$ belong to the center of $Y^{>}_n(\kk)$.

\item Section \ref{sec:modularshufflerealization} establishes the main
results of the paper.  We determine the kernel and image of the modular shuffle homomorphism. 
Using the integral form
constructed in Section \ref{sec:specialization},
we identify $Y_n^{>,[p]}$ with the small Yangian $\bar{y}^{>}_n(\kk)$ via their shuffle realization.
\end{itemize}

\section{Preliminaries}\label{sec:preliminaries}

\subsection{Classical Lie algebras}\label{subsec:classicalliealgebra}

In this subsection, we recall some basic results  
for the classical simple Lie algebras over $\mathbb{C}$.  In type $A_n$ ($n\geq 1$), let $N=n+1$. The Lie algebra $\mathfrak{sl}_N(\mathbb{C})$ is the subalgebra 
of traceless matrices in $\mathfrak{gl}_N(\mathbb{C})$, spanned by
\begin{equation}\label{eq:F_ij-type A}
   F_{ii}=E_{ii}-E_{i+1,i+1}\quad(1\leq i\leq n),\qquad
F_{ij}=E_{ij}\quad(1\leq i\neq j\leq N). 
\end{equation}
The Cartan subalgebra is $\mathfrak{h}=\bigoplus_{i=1}^{n}\mathbb{C}F_{ii}$. Let \(E\) be the \(N\)-dimensional space of diagonal matrices in
\(\mathfrak{gl}_N(\mathbb C)\), and let
\(\{\varepsilon_i\}_{i=1}^N\) be the standard basis of \(E^*\), so that
\[
 \varepsilon_i\bigl(\operatorname{diag}(t_1,\ldots,t_N)\bigr)=t_i.
\]
We equip \(E^*\) with the nondegenerate symmetric bilinear form $(\varepsilon_i,\varepsilon_j)=\delta_{ij}$. Via restriction to \(\mathfrak h\), we identify \(\mathfrak h^*\) with the \(n\)-dimensional subspace
\[
 \left\{
 \sum_{i=1}^N a_i\varepsilon_i
 \;\middle|\;
 \sum_{i=1}^N a_i=0
 \right\}
 =
 \bigoplus_{i=1}^n
 \mathbb C(\varepsilon_i-\varepsilon_{i+1})
 \subset E^*.
\]
The root space decomposition is 
\[\mathfrak{sl}_N(\mathbb C)
 =
 \mathfrak h
 \oplus
 \bigoplus_{1\leq i\neq j\leq N}\mathbb C F_{ij},\]
with root system 
$\Delta=\{\varepsilon_i-\varepsilon_j\mid i\neq j\}$.  Positive roots are 
$\Delta^{+}=\{\varepsilon_i-\varepsilon_j\mid i<j\}$, 
and simple roots are 
$\alpha_i=\varepsilon_i-\varepsilon_{i+1}$ $(1\leq i\leq n)$. A Chevalley basis $\{x_\beta\}_{\beta\in\Delta}\cup\{h_i\}_{i=1}^{n}$ is given by 
\begin{equation}\label{eq:chevalleybasis-A}
 h_i=F_{ii}=E_{ii}-E_{i+1,i+1}\ (1\leq i\leq n),\qquad   x_{\varepsilon_i-\varepsilon_j}=F_{ij}=E_{ij}\ (1\leq i\neq j\leq N) 
\end{equation}

In types $B_n$ ($n\geq 2$), $C_n$ ($n\geq 3$), $D_n$ ($n\geq 4$), 
set $i':=N+1-i$ with $N=2n+1$ for $B_n$, and $N=2n$ for $C_n$ and $D_n$. 
The Lie algebra $\mathfrak{g}\subset\mathfrak{gl}_N(\mathbb{C})$ is spanned by
\begin{equation}\label{eq:F_ij-BCD}
F_{ij}=
\begin{cases}
E_{ij}-E_{j'i'}, &  \text{for types}\ B_n, D_n,\\[2pt]
E_{ij}-\epsilon_i\epsilon_jE_{j'i'}, & \text{for type }\ C_n,
\end{cases}    
\end{equation}
in which $\epsilon_i=1$ for $ 1\leq i\leq n$, $\epsilon_i=-1$ for $n+1\leq i\leq 2n$. The commutation relations are
\begin{equation}\label{eq:commuformulaFij}
  [F_{ij}, F_{k\ell}] = \delta_{kj} F_{i\ell} - \delta_{\ell i} F_{kj}
- \delta_{k i'}\, \theta_i\theta_j\, F_{j'\ell} 
+ \delta_{\ell j'}\, \theta_{i'}\theta_{j'}\, F_{k i'},
\end{equation}
where $\theta_i\equiv 1$ for $B_n,D_n$, and $\theta_i=\epsilon_i$ for $C_n$. In all three types the Cartan subalgebra is 
$\mathfrak{h}=\bigoplus_{i=1}^{n}\mathbb{C}F_{ii}$ with 
$F_{ii}=E_{ii}-E_{i'i'}$.  Define $\varepsilon_i\in\mathfrak{h}^{*}$ by 
$\varepsilon_i(F_{jj})=\delta_{ij}$. Then 
$\mathfrak{g}=\mathfrak{h}\oplus\bigoplus_{\beta\in\Delta}\mathfrak{g}_\beta$ 
with the following positive roots and root spaces:
\begin{equation}\label{eq:positiveroots-BCD}
\begin{array}{c|c|c|c}
& B_n & C_n & D_n\\ \hline
\Delta^{+} & \varepsilon_i\pm\varepsilon_j\;(i<j),\;\varepsilon_i 
& \varepsilon_i\pm\varepsilon_j\;(i<j),\;2\varepsilon_i 
& \varepsilon_i\pm\varepsilon_j\;(i<j)\\[4pt]
\mathfrak{g}_{\varepsilon_i-\varepsilon_j} & \mathbb{C}F_{ij} & \mathbb{C}F_{ij} & \mathbb{C}F_{ij}\\[4pt]
\mathfrak{g}_{\varepsilon_i+\varepsilon_j} & \mathbb{C}F_{ij'} & \mathbb{C}F_{ij'} & \mathbb{C}F_{ij'}\\[4pt]
\mathfrak{g}_{\varepsilon_i} & \mathbb{C}F_{i,n+1} & - & -\\[4pt]
\mathfrak{g}_{2\varepsilon_i} & - & \mathbb{C}F_{ii'} & -
\end{array}
\end{equation}
The simple roots are
\begin{equation}\label{eq:simpleroots-BCD}
\begin{aligned}
B_n&:\; \alpha_i=\varepsilon_i-\varepsilon_{i+1}\;(i<n),\;\alpha_n=\varepsilon_n;\\
C_n&:\; \alpha_i=\varepsilon_i-\varepsilon_{i+1}\;(i<n),\;\alpha_n=2\varepsilon_n;\\
D_n&:\; \alpha_i=\varepsilon_i-\varepsilon_{i+1}\;(i<n),\;\alpha_n=\varepsilon_{n-1}+\varepsilon_n.
\end{aligned}
\end{equation}
We equip $\mathfrak{h}^{*}$ with a nondegenerate symmetric $\mathbb{C}$-bilinear form 
$(\cdot,\cdot)$ such that $\{\varepsilon_i\}$ is an orthonormal basis:
$(\varepsilon_i,\varepsilon_j)=\delta_{ij}$. In terms of the generators $F_{ij}$, a Chevalley basis $\{x_\beta\}_{\beta\in\Delta}\cup\{h_i\}_{i=1}^{n}$ is given by (we list only the basis element corresponding to the Cartan part and positive roots):
\begin{equation}\label{eq:chevalleybasis-BCD}
\begin{array}{c|c|c|c}
& B_n & C_n & D_n\\ \hline
x_{\varepsilon_i-\varepsilon_j} & F_{ij} & F_{ij} & F_{ij}\\
x_{\varepsilon_i+\varepsilon_j} & F_{ij'} & F_{ij'} & F_{ij'}\\
x_{\varepsilon_i} & \sqrt{2}\,F_{i,n+1} & - & -\\
x_{2\varepsilon_i} & - & \frac12 F_{ii'} & -\\[4pt] \hline
h_i\;(i<n) & F_{ii}-F_{i+1,i+1} & F_{ii}-F_{i+1,i+1} & F_{ii}-F_{i+1,i+1}\\
h_n & 2F_{nn} & F_{nn} & F_{n-1,n-1}+F_{nn}
\end{array}
\end{equation}

For any classical simple Lie algebra $\fg$, let $\{\alpha_i\}_{i=1}^{n}$ be the 
simple roots as above. Set $d_i:=\frac{(\alpha_i,\alpha_i)}{2}$. 
Thus in type $A_n$ and type $D_n$, $d_i=1$ for all $1\leq i\leq n$; in type $B_n$, $d_i=1$ for $i<n$ 
and $d_n=\frac12$; in type $C_n$, $d_i=1$ for $i<n$ and $d_n=2$. 
The Cartan matrix $C=(c_{ij})_{i,j=1}^{n}$ is defined by
\begin{equation}\label{eq:cartan-matrix}
c_{ij}:=\frac{2(\alpha_i,\alpha_j)}{(\alpha_i,\alpha_i)}
  ,
\end{equation}
and satisfies $d_i c_{ij}=(\alpha_i,\alpha_j)=d_j c_{ji}$. Conversely, the Lie algebra $\mathfrak{g}$ can be reconstructed from the 
Cartan matrix $C$ via the Chevalley--Serre presentation.  Let $e_i:=x_{\alpha_i}$, $f_i:=x_{-\alpha_i}$ be the root vectors 
corresponding to the simple roots, then $\{e_i,f_i,h_i\}_{i=1}^{n}$ \ (cf. (\ref{eq:chevalleybasis-A}, \ref{eq:chevalleybasis-BCD})) generate $\mathfrak{g}$ and satisfy 
the following defining relations:
\begin{equation}\label{eq:Serre-relations}
\begin{aligned}
& [h_i,h_j]=0,\quad
[h_i,e_j]=c_{ij}e_j,\quad
[h_i,f_j]=-c_{ij}f_j,\quad
[e_i,f_j]=\delta_{ij}h_i,\\[4pt]
& (\mathrm{ad}\  e_i)^{1-c_{ij}}(e_j)=0,\quad
(\mathrm{ad}\ f_i)^{1-c_{ij}}(f_j)=0\qquad (i\neq j).
\end{aligned}
\end{equation}

 Each $\beta\in\Delta^{+}$ can be uniquely expressed as a sum
of simple roots: 
\begin{equation}\label{eq:coefficientpositive}
 \beta=\sum_{1\leq i \leq n}\nu_{\beta,i}\alpha_{i},\qquad  \text{with} \ \nu_{\beta,i}\in\NN.   
\end{equation}
 We shall refer to
$\nu_{\beta,i}$ as the \emph{coefficient of $\alpha_{i}$ in $\beta$}, and  use the following notation: $i\in\beta  \Longleftrightarrow \nu_{\beta,i}\neq 0$. The height of a root $\beta\in \Delta^+$ is 
 $ |\beta|:=\sum_{1\leq i\leq n} \nu_{\beta,i}$. For later use, 
it is convenient to parametrize the positive roots via standard Lyndon words, as in \cite{HT24,HT26} (cf. \cite{Lec04}):

\begin{equation}\label{eq:lyndon-notation}
\begin{array}{c|c|c}
\text{Type} & \text{Positive root} & \text{Standard Lyndon words}\\[4pt] \hline
\rule{0pt}{16pt} 
A_n & \varepsilon_i-\varepsilon_{j+1}\ (1\leq i\leq j\leq n) & [i,j]:=[i,\ldots,j]\\[4pt]
\hline
\rule{0pt}{16pt}
 & \varepsilon_i-\varepsilon_{j+1}\ (1\leq i\leq j\leq n-1) & [i,j]:=[i,\ldots,j]\\[4pt]
B_n & \varepsilon_i\ (1\leq i\leq n) & [i,n]:=[i,\ldots,n]\\[4pt]
 & \varepsilon_i+\varepsilon_j\ (1\leq i<j\leq n) & [i,n,j]:=[i,\ldots,n,n,n-1,\ldots,j]\\[8pt]
\hline
\rule{0pt}{16pt}
 & \varepsilon_i-\varepsilon_{j+1}\ (1\leq i\leq j\leq n-1) & [i,j]:=[i,\ldots,j]\\[4pt]
 & \varepsilon_i+\varepsilon_{n}\ (1\leq i\leq n-1) & [i,n]:=[i,\ldots,n]\\[4pt]
C_n & \varepsilon_i+\varepsilon_j\ (1\leq i<j< n) & [i,n,j]:=[i,\ldots,n-1,n,n-1,\ldots,j]\\[4pt]
 & 2\varepsilon_n & [n]\\[4pt]
 & 2\varepsilon_i\ (1\leq i< n) & [i,n,i]:=[i,\ldots,n-1,i,\ldots,n-1,n]\\[8pt]
\hline
\rule{0pt}{16pt}
 & \varepsilon_i-\varepsilon_{j+1}\ (1\leq i\leq j<n) & [i,j]:=[i,\ldots,j]\\[4pt]
D_n & \varepsilon_i+\varepsilon_n\ (1\leq i\leq n-1) & [i,n]:=[i,\ldots,n-2,n]\\[4pt]
 & \varepsilon_i+\varepsilon_j\ (1\leq i<j<n) & [i,n,j]:=[i,\ldots,n-2,n,n-1,\ldots,j]
\end{array}
\end{equation}
The lexicographical order on the standard Lyndon words gives rise to a convex order $<$ on $\Delta^{+}$, cf. \cite[Proposition 26]{Lec04}. In what follows, we fix this
specific order on $\Delta^{+}$.

\subsection{Drinfeld presentation of the Yangian $Y^{>}_n$}\label{subsec:Drinfeldpre}

Let $\mathfrak{g}$ be a classical simple Lie algebra of rank $n$ over $\mathbb{C}$. Following Drinfeld \cite{Drin88} (cf. \cite{Mol07}), we define 
the {\it positive subalgebra} of the Yangian associated to $\mathfrak{g}$, 
denoted by $Y^{>}_n$, to be the associative $\mathbb{C}$-algebra generated by 
$\{e_{i,r}\mid 1\leq i\leq n,\ r\in\mathbb{N}\}$ subject to the following defining relations:
\begin{align}
&[e_{i,r+1},e_{j,s}]-[e_{i,r},e_{j,s+1}]=\tfrac{d_i c_{ij}}{2}(e_{i,r}e_{j,s}+e_{j,s}e_{i,r})
\qquad \forall\ 1\leq i,j\leq n,\ r,s\in\mathbb{N},\label{eq:Drinfeld-rel1}\\[4pt]
&\mathop{\mathrm{Sym}}_{s_1,\dots,s_{1-c_{ij}}}[e_{i,s_1},[e_{i,s_2},\cdots,[e_{i,s_{1-c_{ij}}},e_{j,r}]\cdots]]=0
\qquad \forall\ 1\leq i\neq j\leq n,\ s_1,\dots,s_{1-c_{ij}},r\in \mathbb{N}.\label{eq:Drinfeld-rel2}
\end{align}

Recall the Lyndon presentation of the positive roots, cf. \eqref{eq:lyndon-notation}. The fixed total order on $\Delta^{+}$  induces a total order on $\Delta^{+}\times\mathbb{N}$ as follows:
\begin{equation}\label{eq:order-Delta-N}
(\beta,r)\leq (\beta',r') \ \text{if}\ \beta<\beta' \ \text{or}\ \beta=\beta',\ r\geq r'.
\end{equation}
Now we introduce the root vectors in $Y^{>}_n$. For  $\beta=[i_1,\dots,i_\ell]\in \Delta^{+}$ and $r\in\NN$, we define 
$e_{\beta}(r)\in Y^{>}_n$ as follows 
(cf. \cite{HT24}--\cite{HT26}):

\begin{itemize}[leftmargin=0.7cm]
\item If $\mathfrak{g}$ is of type $B_n$ and $\beta=[i,n]$ (with $1\leq i\leq n$), we set \footnote{Here the choice of root vectors differs from that of
\cite{HT24}--\cite{HT26}, since it
is convenient for the study of $p$-center of modular Yangian, cf. Subsection \ref{subsec:modular-pcenter}. }
\begin{equation}\label{eq:B-root-vector-special}
e_{\beta}(r):=[\cdots[[e_{i,0},e_{i+1,0}],e_{i+2,0}],\cdots,e_{n,r}],
\end{equation}
\item If $\mathfrak{g}$ is of type $C_n$ and $\beta=[i,n,i]$ (with $1\leq i<n$), we set
\begin{equation}\label{eq:C-root-vector-special}
e_{\beta}(r):=
[[\cdots[e_{i,0},e_{i+1,0}],\cdots,e_{n-1,0}],\,
[[\cdots[e_{i,0},e_{i+1,0}],\cdots,e_{n-1,0}],e_{n,r}]].
\end{equation}
\item In all other cases, we set
\begin{equation}\label{eq:root-vector-standard}
e_{\beta}(r):=[\cdots[[e_{i_1,r},e_{i_2,0}],e_{i_3,0}],\cdots,e_{i_\ell,0}].
\end{equation}
\end{itemize}

Let $\mathcal{H}$ denote the set of all functions 
$h\colon \Delta^{+}\times\mathbb{N}\rightarrow \mathbb{N}$ with finite support. 
For any $h\in\mathcal{H}$, we consider the ordered monomial
\begin{equation}\label{eq:pbwd-yangian}
E_{h}\, =\prod_{(\beta,r)\in\Delta^{+}\times\mathbb{N}}^{\rightarrow} e_{\beta}(r)^{\,h(\beta,r)},
\end{equation}
where the arrow $\rightarrow$ over the product sign refers to the total order 
\eqref{eq:order-Delta-N}. Then, following \cite{Lev93} (cf. \cite[Theorem B.3]{FT19}), we have the PBW theorem for $Y^>_n$:

\begin{Theorem}\label{thm:yangian-basis}
The elements $\{E_{h}\}_{h\in\mathcal{H}}$ defined by \eqref{eq:pbwd-yangian} form 
a $\mathbb{C}$-basis of $Y^{>}_n$.
\end{Theorem}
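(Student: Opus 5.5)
We prove the two halves of Theorem~\ref{thm:yangian-basis} separately: first that the ordered monomials $E_h$ span $Y^{>}_n$, then that they are linearly independent. Both steps use the filtration of $Y^{>}_n$ by total degree in $r$, that is, $\deg e_{i,r}=r$, and its associated graded algebra. The Drinfeld relations \eqref{eq:Drinfeld-rel1}--\eqref{eq:Drinfeld-rel2} are homogeneous for the $Q^+$-grading. They are also filtered: the right-hand side of \eqref{eq:Drinfeld-rel1} has degree $r+s$, strictly below $r+s+1$. Hence $\operatorname{gr}Y^{>}_n$ is a quotient of $U(\fn^+[t])$, where $\fn^+=\bigoplus_{\beta\in\Delta^+}\fg_\beta$, via $e_{i,r}\mapsto e_i\otimes t^r$.

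\emph{Spanning.} Under this surjection, each $e_\beta(r)$ maps to $x\otimes t^r$ for a nonzero root vector $x\in\fg_\beta$. For the iterated commutators \eqref{eq:root-vector-standard}, \eqref{eq:B-root-vector-special} and \eqref{eq:C-root-vector-special}, the image is the corresponding iterated Lie bracket in $\fn^+[t]$. This bracket is nonzero because the bracketing along a standard Lyndon word gives a root vector, by Leclerc's theory of standard Lyndon words. In type $C$ the bracket \eqref{eq:C-root-vector-special} has to be checked by hand; it is a nonzero multiple of $F_{ii'}\otimes t^r$. The classical PBW theorem for $U(\fn^+[t])$ then says that ordered monomials in the elements $x_\beta\otimes t^r$ span. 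A standard induction on filtration degree, together with the $Q^+$-grading, lifts this spanning statement to $Y^{>}_n$.

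\emph{Linear independence.} This is the main obstacle, because spanning only shows $\operatorname{gr}Y^{>}_n$ is a quotient of $U(\fn^+[t])$ and we need the surjection to be an isomorphism. I would argue by a faithful-representation argument, or more conveniently by passing to the full Yangian $Y(\fg)$. There are two routes.
\begin{enumerate}
\item Use Levendorskii's result that $Y^{>}_n$ embeds in $Y(\fg)$ and that $Y(\fg)$ is a flat deformation of $U(\fg[t])$, with $\operatorname{gr}Y(\fg)\cong U(\fg[t])$ (Drinfeld; proved via the $J$-presentation or the RTT presentation in classical types). Then $\operatorname{gr}Y^{>}_n\to U(\fg[t])$ factors through $U(\fn^+[t])\hookrightarrow U(\fg[t])$. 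This forces the kernel of $U(\fn^+[t])\to\operatorname{gr}Y^{>}_n$ to be zero.
\item Alternatively, use the shuffle realization in characteristic zero. Construct specialization maps $\phi_{h}$ that evaluate a shuffle element along the Lyndon-word pattern. Then show triangularity: $\phi_h(\Psi(E_{h'}))=0$ unless $h'\le h$ in a suitable order, and $\phi_h(\Psi(E_h))\neq0$. This gives linear independence of $\Psi(E_h)$, hence of the $E_h$.
\end{enumerate}

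I would follow the first route, citing \cite{Lev93} and \cite[Theorem~B.3]{FT19} for the triangular decomposition $Y(\fg)\cong Y^{<}\otimes Y^{0}\otimes Y^{>}$ and for the injectivity of $Y^{>}_n\to Y(\fg)$. The delicate point is confirming that the PBW property of $\operatorname{gr}Y(\fg)$ restricts correctly to the positive part. This needs compatibility of the degree filtration with the embedding, which follows from homogeneity of the relations.
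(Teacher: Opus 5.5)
Your route (1) is essentially the argument behind the paper's citation of \cite{Lev93} and \cite[Theorem~B.3]{FT19} for this theorem. It is also essentially the scheme the paper carries out for the modular analogue in Theorem~\ref{thm:yangian-basisoverk}: a surjection $\pi\colon U(\fn^+[t])\twoheadrightarrow\operatorname{gr}Y^>_n$, a filtered map into a Yangian with known associated graded, and leading symbols of the $e_\beta(r)$ as in Proposition~\ref{prop:gaussian-lyndon-triangular}. There, nonvanishing of the left-normed brackets is checked directly rather than through Leclerc's standard bracketing.

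The ``delicate point'' you flag is not needed. Injectivity of $\iota\colon Y^>_n\to Y(\fg)$ and compatibility of filtrations play no role, and homogeneity of the relations would not give the latter anyway. All you use is that $\iota$ is filtered and that, under $\operatorname{gr}Y(\fg)\cong U(\fg[t])$, the symbol of $\iota(e_{i,r})$ is $\lambda_i\, e_i t^r$ with $\lambda_i\neq 0$. Then $\operatorname{gr}\iota\circ\pi$ is, up to this rescaling, the natural inclusion $U(\fn^+[t])\hookrightarrow U(\fg[t])$, so $\pi$ is injective.
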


\subsection{Shuffle algebra realization of $Y^>_n$}\label{subses:shuffleyangian}
Following \cite[Section 6]{Tsy21}\cite[Section 5]{HT24}\cite[Section 5]{HT26},  let us now recall the {\it Feigin-Odesskii shuffle algebra} and the shuffle algebra realization of $Y^>_n$.
Let $\Sigma_k$ denote the symmetric group in $k$ elements, and set $\Sigma_{(k_1,\dots,k_{n})}:=\Sigma_{k_1}\times\cdots\times \Sigma_{k_{n}}$ for $k_1,\dots,k_{n}\in\NN$.
Consider the following $\NN^{n}$-graded $\CC$-vector space
\[ 
  W^{(n)}=\bigoplus_{\underline{k}=(k_1,\dots,k_n)\in \mathbb{N}^{n}}W^{(n)}_{\underline{k}},
\]
where $W^{(n)}_{\underline{k}}$ consists of rational functions $F$ in the variables 
$\{x_{i,r}\}_{1\leq i\leq n}^{1\leq r\leq k_{i}}$ such that:
\begin{itemize}[leftmargin=0.7cm]

\item
$F$ is $\Sigma_{\unl{k}}$-symmetric, that is, symmetric in $\{x_{i,r}\}_{r=1}^{k_{i}}$ for each $1\leq i\leq n$,

\medskip
\item
(\emph{pole conditions}) $F$ has the form
\begin{equation}
\label{polecondition}
  F=\frac{f(\{x_{i,r}\}_{1\leq i\leq n}^{1\leq r\leq k_{i}})}
         {\prod_{i<j}^{c_{ij}\neq 0}\prod_{1\leq r\leq k_{i}}^{1\leq s\leq k_{j}}(x_{i,r}-x_{j,s})},
\end{equation}
where $f\in \CC[\{x_{i,r}\}_{1\leq i\leq n}^{1\leq r\leq k_{i}}]^{\Sigma_{\underline{k}}}$,

\medskip
\item
(\emph{wheel conditions}) Let $f$ be the numerator of $F\in W^{(n)}_{\unl{k}}$ from \eqref{polecondition}, then
\begin{equation}
\label{wheel-Y}
  f(\{x_{i,r}\}_{1\leq i\leq n}^{1\leq r\leq k_{i}})=0 \  \text{once}\
  x_{i,s_{1}}=x_{i,s_{2}}+d_{i}=\cdots=x_{i,s_{1-c_{ij}}}-d_{i}c_{ij}=x_{j,r}-\frac{d_{i}c_{ij}}{2}
\end{equation}
for any $ 1\leq i\neq j\leq n$ such that $c_{ij}\neq 0$, pairwise distinct $1\leq s_{1},\dots,s_{1-c_{ij}}\leq k_{i}$, and $1\leq r\leq k_{j}$.
\end{itemize}

\begin{Definition}\label{def:shuffleelements}
For $\unl{k}=(k_1,\dots,k_{n}), \unl{\ell}=(\ell_1,\dots,\ell_{n})\in\NN^{n}$, the set $\text{sh}_{\unl{k},\unl{\ell}}$ of $(\unl{k},\unl{\ell})$-shuffles  is the set of elements $\sigma_1\times \cdots \times \sigma_{n}\in\Sigma_{\unl{k}+\unl{\ell}}$ which satisfy:
\begin{equation*}
 \sigma_i(1)<\cdots <\sigma_i(k_i),\quad \sigma_i(k_i+1)<\cdots <\sigma_i(k_i+\ell_i),\qquad \text{for any}\ 1\leq i\leq n.   
\end{equation*}   
 \end{Definition}

We fix an $n\times n$ matrix of rational functions $(\zeta_{ij}(z))_{1\leq i,j\leq n}\in\Mat_{n\times n}(\CC(z))$ via
\begin{equation}\label{eq:zetafactor}
    \zeta_{ij}(z):=1+\frac{d_ic_{ij}}{2z}.
    \end{equation}
    Given $F\in W^{(n)}_{\underline{k}}$ and $G\in W^{(n)}_{\underline{\ell}}$,
we define the {\it shuffle product}  $\star$ on $W^{(n)}$ via
\begin{equation}\label{shuffle product-1}
\begin{aligned}
W^{(n)}_{\underline{k}+\underline{\ell}}\ni & (F\star G)(x_{1,1},\dots, x_{1,k_1+\ell_1};\dots;x_{n,1},\dots,x_{n,k_{n}+\ell_{n}}):=\\
   & \Sym_{\text{sh}_{\unl{k},\unl{\ell}}}\Big(
   F\big(\{x_{i,r}\}_{1\leq i\leq  n}^{1\leq r\leq k_i}\big) G\big(\{x_{i',r'}\}_{1\leq i'\leq n}^{k_{i'}< r'\leq k_{i'}+\ell_{i'}}\big)\cdot\prod\limits_{1\leq i,i'\leq  n}\prod\limits_{r\leq k_i}^{r'>k_{i'}}\zeta_{ii'}(x_{i,r}-x_{i',r'})\Big).
\end{aligned}
\end{equation}
where $\zeta$-factor $\zeta_{ii'}(z)$ is as \eqref{eq:zetafactor}, the symmetrization associated to $\text{sh}_{\unl{k},\unl{\ell}}$ is as
\begin{align*}
\Sym_{\text{sh}_{\unl{k},\unl{\ell}}}(f)\big(\{x_{i,1},\dots,x_{i,m_i}\}_{1\leq i\leq n}\big) :=\sum\limits_{(\sigma_1,\dots,\sigma_{n})\in \text{sh}_{\unl{k},\unl{\ell}}}f\big(\{x_{i,\sigma_i(1)},\dots,x_{i,\sigma_i(m_i)}\}_{1\leq i\leq n}\big).
\end{align*}

\begin{Remark}
Following \cite[Remarks 3--4]{Enr00}, the product \eqref{shuffle product-1} is the same as the product used in \cite[Sect. 6.2]{Tsy21}:
\begin{equation*}
\begin{aligned}
W^{(n)}_{\underline{k}+\underline{\ell}}\ni & (F\star G)(x_{1,1},\dots, x_{1,k_1+\ell_1};\dots;x_{n,1},\dots,x_{n,k_{n}+\ell_{n}}):=\frac{1}{\underline{k}!\cdot\underline{\ell}!}\\
   &\times \Sym_{\Sigma_{\underline{k}+\underline{\ell}}}\Big(
   F\big(\{x_{i,r}\}_{1\leq i\leq  n}^{1\leq r\leq k_i}\big) G\big(\{x_{i',r'}\}_{1\leq i'\leq  n}^{k_{i'}< r'\leq k_{i'}+\ell_{i'}}\big)\cdot\prod\limits_{1\leq i,i'\leq n}\prod\limits_{r\leq k_i}^{r'>k_{i'}}\zeta_{ii'}(x_{i,r}-x_{i',r'})\Big),
\end{aligned}
\end{equation*}
where $\underline{k}!:=\prod_{1\leq i\leq n}k_i!$, while the {\it symmetrization} of $f\in\CC\big(\{x_{i,1},\dots,x_{i,m_i}\}_{1\leq i\leq n}\big)$ is defined via
\begin{align*}
  \Sym_{\Sigma_{\underline{m}}}(f)\big(\{x_{i,1},\dots,x_{i,m_i}\}_{1\leq i\leq n}\big) :=\sum\limits_{(\sigma_1,\dots,\sigma_{n})\in\Sigma_{\underline{m}}}f\big(\{x_{i,\sigma_i(1)},\dots,x_{i,\sigma_i(m_i)}\}_{1\leq i\leq n}\big).
\end{align*}
 The advantage of \eqref{shuffle product-1} is that it also applies to modular case, cf. Section \ref{sec:modularshufflerealization}.
\end{Remark}

It is straightforward to check that  $W^{(n)}$
is closed under $\star$. The resulting associative $\CC$-algebra $W^{(n)}$ is called the {\it rational shuffle algebra}. It is related to $Y_n^>$ via the following result (cf. \cite[Theorem 6.20]{Tsy21}\cite[Theorem 5.14]{HT24}\cite[Theorem 5.11]{HT26}):
\begin{Theorem}\label{thm:shufflerealizationyangian}
The assignment $e_{i,r}\mapsto x_{i,1}^r~(1\leq i\leq n,r\in\NN)$ gives rise to a $\CC$-algebra isomorphism 
\begin{equation}\label{eq:shufflerealizationyangian}
\Psi\colon Y_n^> \stackrel{\sim}{\longrightarrow}W^{(n)}.     
\end{equation}  
\end{Theorem}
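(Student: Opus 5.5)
The plan is to prove the theorem in three steps: $\Psi$ is well defined, it is injective, and it is surjective. The map is defined on generators by $e_{i,r}\mapsto x_{i,1}^r\in W^{(n)}_{\alpha_i}$, where $\alpha_i$ denotes the $i$-th unit degree in $\NN^n$.

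\emph{Well-definedness.} We check that the images satisfy \eqref{eq:Drinfeld-rel1} and \eqref{eq:Drinfeld-rel2}.
\begin{itemize}[leftmargin=0.5cm]
\item For \eqref{eq:Drinfeld-rel1}, compute $x_{i,1}^a\star x_{j,1}^b$ directly from \eqref{shuffle product-1}. Since $\zeta_{ij}(z)=(z+d_ic_{ij}/2)/z$, the combination
\[[e_{i,r+1},e_{j,s}]-[e_{i,r},e_{j,s+1}]\]
maps to $(x-y)\bigl(x^ry^s\zeta_{ij}(x-y)+y^sx^r\zeta_{ji}(y-x)\bigr)$, with $x=x_{i,1}$ and $y=x_{j,1}$, symmetrized when $i=j$.
\item Using $d_ic_{ij}=d_jc_{ji}$, this equals $\frac{d_ic_{ij}}{2}$ times the image of the anticommutator, as required.
\item For the Serre relation \eqref{eq:Drinfeld-rel2}, compute the symmetrized iterated commutator as an explicit rational function. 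The numerator vanishes identically; the standard argument of \cite{Tsy21} shows it is antisymmetric in a way forced by the $\zeta$-factors.
\end{itemize}
We also verify that $W^{(n)}$ is closed under $\star$, which the text states is straightforward. This gives an algebra map $\Psi$.

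\emph{Injectivity.} Using Theorem~\ref{thm:yangian-basis}, it suffices to show that the images $\Psi(E_h)$ are linearly independent. The tool is the specialization maps $\phi_{\unl d}$ of \cite{HT24,HT26}. For a degree $\unl k=\sum_\beta d_\beta\beta$, split the variables into groups attached to each root $\beta$ with multiplicity $d_\beta$. Within each copy of $\beta$, specialize the variables along the chain dictated by the Lyndon word, e.g.\ $x_{i_{a+1}}=x_{i_a}-d_{i_a}c_{i_ai_{a+1}}/2$. The steps are:
\begin{enumerate}[leftmargin=0.6cm]
\item Show that $\phi_{\unl d}(\Psi(e_\beta(r)))$ is, up to a nonzero constant, $y_\beta^r$.
\item Show that $\phi_{\unl d}(\Psi(E_h))=0$ whenever $\deg h$ is smaller than $\unl d$ in a suitable order refining the convex order.
\item For $\deg h=\unl d$, show that $\phi_{\unl d}(\Psi(E_h))$ is a nonzero constant times a product of symmetric monomials in the $y_{\beta,s}$ times a fixed nonzero factor.
\end{enumerate}
A triangularity argument then gives linear independence.

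\emph{Surjectivity.} Take $F\in W^{(n)}_{\unl k}$ and induct downward along the order on degree vectors $\unl d$. The main obstacle is showing that the wheel conditions \eqref{wheel-Y} force $\phi_{\unl d}(F)$ to be divisible by the same fixed factor that appears in $\phi_{\unl d}(\Psi(E_h))$. This requires a careful analysis of which wheel configurations appear after specialization, done type by type as in \cite{HT24,HT26}. Once divisibility holds, $\phi_{\unl d}(F)$ lies in the span of the $\phi_{\unl d}(\Psi(E_h))$. Subtracting the corresponding combination kills $\phi_{\unl d}(F)$, and we move to the next $\unl d$. An element killed by all $\phi_{\unl d}$ is zero, because the minimal specialization, with all roots simple, is injective. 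Hence $F\in\operatorname{im}\Psi$, and $\Psi$ is an isomorphism.
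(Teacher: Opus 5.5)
Your architecture is the one behind the results the paper cites for this theorem \cite{Tsy21,HT24,HT26}. The paper does not reprove it, but it reuses the same machinery in Lemmas~\ref{phiyangianrs}--\ref{prop:mainpropertyspe} and~\ref{lem:span}: a direct check of the relations, injectivity via Theorem~\ref{thm:yangian-basis} plus triangularity of $\phi_{\unl d}(\Psi(E_h))$, and surjectivity by successively matching specializations. Your order is the reverse of \eqref{eq:KP-order}, but you use it consistently. Ending with the all-simple-roots partition, where $\phi_{\unl d}$ is just a shift of the numerator, is fine.

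\textbf{The gap: where the divisibility comes from.} The step you single out as the main obstacle would fail as you state it. The wheel conditions \eqref{wheel-Y} alone do not force divisibility of $\phi_{\unl d}(F)$ by the full factor in \eqref{eq:spephidEh}. For an arbitrary $F$ the paper only records divisibility by $\prod_\beta G_\beta$ (Subsection~\ref{subs:impsikk}). Part of the cross factors $G_{\beta,\beta'}$ comes only from the vanishing of $\phi_{\unl d'}(F)$ for the Kostant partitions already treated, which is why Lemmas~\ref{prop:mainpropertyspe} and~\ref{lem:span} carry that hypothesis.

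Here is a concrete case, in type $A_2$ with $\unl k=(1,1)$:
\begin{enumerate}
\item There are no wheel conditions, so $F=f(x,y)/(x-y)$ with $f$ arbitrary.
\item By contrast, $\Psi(e_{1,a}e_{2,b})$ has numerator $x^ay^b(x-y-\tfrac12)$, and $\phi_{\{[1],[2]\}}$ sends $x-y-\tfrac12$ to $w_{[1],1}-w_{[2],1}$.
\item So $\phi_{\{[1],[2]\}}(F)$ is divisible by this factor exactly when $\phi_{\{[1,2]\}}(F)=0$.
\end{enumerate}
Your induction does supply this vanishing. The repair is to use it explicitly in the divisibility argument instead of appealing to the wheel conditions.

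\textbf{Smaller slips.}
\begin{enumerate}
\item In the check of \eqref{eq:Drinfeld-rel1}, the combination is $(x-y)x^ry^s\bigl(\zeta_{ij}(x-y)-\zeta_{ji}(y-x)\bigr)=d_ic_{ij}x^ry^s$. It has a minus sign; with your plus sign it equals $2(x-y)x^ry^s$, which is not $d_ic_{ij}x^ry^s$.
\item Your chain specialization has the wrong sign. It must be $x_{i_{a+1}}=x_{i_a}+\tfrac{d_{i_a}c_{i_ai_{a+1}}}{2}$, the zero of $\zeta_{i_ai_{a+1}}(x_{i_a}-x_{i_{a+1}})$; in type $A$ this gives $x_{\ell+1}=x_\ell-\tfrac12$, as in \eqref{spe-A}. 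With your sign, $\phi_{\{[1,2]\}}(\Psi(e_{1,a}e_{2,b}))\neq0$ and step~(2) already fails.
\item In step~(3), the factor attached to each root is not a product of monomial symmetric functions. It is the Hall--Littlewood-type polynomial $\hat P_{\lambda_{h,\beta}}$ of \eqref{hlp-rat}. Already in type $A_1$, $\Psi(e_{1,1}e_{1,0})=x_1+x_2+1$, i.e.\ $y_1+y_2+1$ after specialization. Both linear independence and the spanning needed for surjectivity then follow from the unitriangularity \eqref{eq:plambdamlambda} with respect to the $m_\lambda$, and you should invoke that instead.
\end{enumerate}
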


The root vectors $e_{\beta}(r)$ of \eqref{eq:B-root-vector-special}--\eqref{eq:root-vector-standard} admit remarkably simple images under the  isomorphism $\Psi$, which we summarize for all classical
types, cf. \cite{Tsy21}\cite{HT24}\cite{HT26}:
\begin{itemize}[leftmargin=0.7cm]

\item When $\beta=[i,j]$, except for the short root $[i,n]$ in type $B_n$, we have:
\begin{align}\label{eq:psirv-1}
\Psi(e_{[i,j]}(r)) \doteq \frac{x_{i,1}^r}{\mathrm{denom}_{[i,j]}}.
\end{align}
For $\beta=[i,n]$ in type $B_n$, we have 
\begin{align}\label{eq:psirv-B-short}
\Psi(e_{[i,n]}(r)) \doteq \frac{x_{n,1}^r}{\mathrm{denom}_{[i,n]}}.
\end{align}

\item When $\beta=[i,n,j]$:

For type $B_n$, we have
\begin{align}
\Psi(e_{[i,n,j]}(r)) \doteq \frac{x_{i,1}^r}{\mathrm{denom}_{[i,n,j]}}
\prod_{\ell=j}^{n-1} (x_{\ell,1}-x_{\ell,2}+1)(x_{\ell,2}-x_{\ell,1}+1).
\end{align}

For type $C_n$,  we have
\begin{align}
\Psi(e_{[i,n,j]}(r)) \doteq \frac{x_{i,1}^r}{\mathrm{denom}_{[i,n,j]}}
(2x_{j-1,1}-x_{j,1}-x_{j,2})
\prod_{\ell=j}^{n-2} Q(x_{\ell,1},x_{\ell,2},x_{\ell+1,1},x_{\ell+1,2}),
\end{align}

For type $D_n$, we have
\begin{align}
\Psi(e_{[i,n,j]}(r)) \doteq \frac{x_{i,1}^r}{\mathrm{denom}_{[i,n,j]}}
\prod_{\ell=j}^{n-2} (x_{\ell,1}-x_{\ell,2}+1)(x_{\ell,2}-x_{\ell,1}+1).
\end{align}

\item When $\beta=[i,n,i]$ (only in type $C_n$), we have
\begin{align}\label{eq:psirv-2}
\Psi(e_{[i,n,i]}(r)) \doteq \frac{x_{n,1}^r}{\mathrm{denom}_{[i,n,i]}}
\prod_{\ell=i}^{n-2} Q(x_{\ell,1},x_{\ell,2},x_{\ell+1,1},x_{\ell+1,2}).
\end{align}

\end{itemize}  
Here, we use $\mathrm{denom}_{\beta}$ to denote the denominator of any $F=\Psi(e_{\beta}(r))$  in \eqref{polecondition}, $Q$ to denote the polynomial:
\begin{equation}\label{eq:Qpolyonimal}
Q(x_1,x_2,y_1,y_2)=4(x_1x_2+y_1y_2)-2(x_1+x_2)(y_1+y_2)+1,  
\end{equation}
and $\doteq$ to denote an equality up to $\ZZ[\frac12]^{\times}$, that is
\begin{equation}\label{eq:doteqmean}
 A\doteq B \quad \text{if} \quad  A=c\cdot B \quad \text{for some} \ c=\pm 2^{t}\ (t\in\ZZ).
\end{equation}

\subsection{Hall-Littlewood polynomials}\label{subs:zbasis}

In type $A_1$,  the rational shuffle algebra $W^{(1)}$ consists  of symmetric polynomials (there are no pole conditions and wheel conditions):
\[ W^{(1)}=\bigoplus_{k\in\NN} W^{(1)}_k,\quad \text{in which}\quad  W^{(1)}_k=\CC[x_1,\dots,x_k]^{\Sigma_k},\]
where we denote the variable $x_{1,r}$ in $W^{(1)}$ simply by $x_r$. Consider the following more general shuffle product with a parameter $\hbar\in \ZZ[\frac12]$: 
\begin{equation}\label{eq:shuffleproducta1hbar} 
(F\star G)(x_{1},\dots, x_{k+\ell})=\Sym_{\text{sh}_{k,\ell}}\left(
   F\left(\{x_{r}\}_{1\leq r\leq k}\right) G\left(\{x_{r'}\}_{k< r'\leq k+\ell}\right)\cdot\prod\limits_{r\leq k}^{r'>k}\frac{x_{r}-x_{r'}+\hbar}{x_{r}-x_{r'}}\right).
    \end{equation}
The resulting associative algebra will be denoted by $W^{(1)}_{\hbar}$. The following result is from \cite[Lemma 6.22]{Tsy21}:
\begin{Lemma}\label{k times of r-powerof x1}
For any $k\geq 1$ and $r\in\NN$, the $k$-th power of $x_1^r\in W_{\hbar}^{(1)}$ equals
\begin{align}\label{equ:k times of r-powerof x1}
\underbrace{x_1^r\star\cdots\star x_1^r}_{k~{\rm times}}=k!\cdot(x_1\cdots x_k)^r.  
\end{align}
\end{Lemma}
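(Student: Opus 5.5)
We argue by induction on $k$, using the shuffle product \eqref{eq:shuffleproducta1hbar}. The case $k=1$ is trivial. Suppose
\[
(x_1^r)^{\star(k-1)}=(k-1)!\,(x_1\cdots x_{k-1})^r .
\]
Then
\[
(x_1^r)^{\star k}=(k-1)!\,(x_1\cdots x_{k-1})^r\star x_1^r .
\]
By \eqref{eq:shuffleproducta1hbar} with $(k-1,1)$-shuffles, this equals
\[
(k-1)!\sum_{j=1}^{k} (x_1\cdots x_k)^r \prod_{i\neq j}\frac{x_i-x_j+\hbar}{x_i-x_j}.
\]
For a $(k-1,1)$-shuffle, the single right-hand variable is some $x_j$, and the left block consists of the remaining variables in increasing order. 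The monomial factors $F$ and $G$ together give $(x_1\cdots x_k)^r$ in every term, independent of $j$. So it suffices to prove the identity
\[
\sum_{j=1}^{k}\prod_{i\neq j}\frac{x_i-x_j+\hbar}{x_i-x_j}=k .
\]

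To prove this, denote the left side by $S$ and view it as a rational function of $x_1,\dots,x_k$. It is symmetric, since permuting the variables permutes the summands.

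First we show $S$ has no poles. The only possible poles lie along $x_a=x_b$. Only the summands $j=a$ and $j=b$ contain the factor $1/(x_a-x_b)$. Write both terms over the common denominator $x_a-x_b$. The remaining factors of the two terms agree when $x_a=x_b$, and the numerators become $\hbar$ and $-\hbar$ there. Hence the sum of the two residues vanishes, and $S$ is a polynomial.

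Next we determine this polynomial. Each summand is homogeneous of degree $0$ jointly in $x$ and $\hbar$. Each summand also tends to $1$ when the variables are sent to infinity along a generic direction, for instance $x_i=t\,c_i$ with distinct $c_i$ and $t\to\infty$. A polynomial that stays bounded along every such ray is constant. Its value along the ray is the limit, so $S=k$.

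Alternatively, one can avoid the limiting argument with partial fractions. Consider
\[
\frac{1}{\hbar}\prod_{i=1}^{k}\frac{z-x_i+\hbar}{z-x_i}
\]
as a function of $z$, and use the fact that the sum of its residues at $z=x_j$ and $z=\infty$ is zero. The residue at $z=x_j$ is exactly $\prod_{i\neq j}\frac{x_j-x_i+\hbar}{x_j-x_i}$, and the residue at infinity equals $-k$. This gives the identity up to the symmetric relabeling $\hbar\mapsto-\hbar$. That relabeling is harmless, since the first argument already shows $S=k$ for all $\hbar$.

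The main obstacle is this symmetric-function identity. The rest of the argument is bookkeeping with the shuffle sum, and the whole proof is characteristic-free apart from the factor $k!$.
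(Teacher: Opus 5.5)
Your proof is correct. The paper gives no argument of its own here and simply quotes \cite[Lemma 6.22]{Tsy21}.

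The one-shot route suggested by \eqref{eq:halllittlewoodpolynomial} writes the $k$-fold product directly as a full $\Sigma_k$-symmetrization. It therefore reduces the lemma to
\[
\sum_{\sigma\in\Sigma_k}\sigma\Bigl(\prod_{1\leq i<j\leq k}\frac{x_i-x_j+\hbar}{x_i-x_j}\Bigr)=k!,
\]
which is a rational degeneration of Macdonald's symmetrization identity for Hall--Littlewood polynomials \cite[Chapter III]{Mac95}. That route fits the $P_\lambda$ formalism used later, but it relies on the external identity. Your induction on $k$ peels off one variable at a time and needs only the one-layer identity $\sum_{j}\prod_{i\neq j}\frac{x_i-x_j+\hbar}{x_i-x_j}=k$, which you prove from scratch. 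By associativity of $\star$ (take $r=0$), it even recovers the displayed identity, so your argument is self-contained.

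Two small points.

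First, in your residue alternative the substitution $\hbar\mapsto-\hbar$ needs no appeal to your first argument. The residue computation holds for every $\hbar\neq0$, so just apply it with $-\hbar$; the case $\hbar=0$ is trivial.

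Second, the paper uses this lemma over $\ZZ[\frac12]$ (Lemma \ref{lemma:inclusion}) and over $\kk$ (to get $\Psi_{\kk}(e_{i,r}^p)=0$). Your limit along rays is an argument over $\CC$, so your claim that the proof is characteristic-free deserves one line. Treat $\hbar$ as an indeterminate. Your pole-cancellation step then shows that $S\cdot\prod_{a<b}(x_a-x_b)\in\ZZ[\hbar,x_1,\dots,x_k]$ is divisible by each $x_a-x_b$, hence $S\in\ZZ[\hbar,x_1,\dots,x_k]$. Being homogeneous of degree $0$, $S$ is a constant, and setting $\hbar=0$ gives $S=k$. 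This avoids limits entirely and yields an integral identity that specializes to any coefficient ring, including $\kk$.
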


Let $\cP_k$ be the set of partitions of length $k$, where   $\lambda\in\cP_k$ is denoted by 
\[\lambda=\left\{\lambda_1\geq \lambda_2\geq\cdots\geq \lambda_k\right\},\quad \text{in which}\quad   \lambda_1,\lambda_2\dots,\lambda_k\in\NN,\]
 and $\cP=\bigcup_{k\in\NN} \cP_k$ be the set of all partitions. For $\lambda\in\cP$, we denote the number of parts $\lambda_j$ which are equal to $i$ by $m_i(\lambda)$,  $\mul(\lambda)=\prod^{\infty}_{i=0} m_i(\lambda)!$,  and the size of $\lambda$ by $|\lambda|\coloneqq \lambda_1+\lambda_2+\cdots+\lambda_k$. For $\lambda\in\cP_k$,  the following is a variant of {\em Hall-Littlewood polynomial} (cf. \cite[chapter III]{Mac95}):
\begin{equation}\label{eq:halllittlewoodpolynomial}
\begin{aligned}
P_\lambda&\coloneqq \frac{1}{\mul(\lambda)}\cdot  x^{\lambda_1}_1\star x^{\lambda_2}_1\star\cdots\star x^{\lambda_k}_1\\
&=\frac{1}{\mul(\lambda)}\cdot \Sym_{\Sigma_k}\left(x^{\lambda_1}_1x^{\lambda_2}_2\cdots x^{\lambda_k}_k\prod_{1\leq i<j\leq k}\frac{x_i-x_j+\hbar}{x_i-x_j}\right).
\end{aligned}
\end{equation} 

Let  $\W^{(1)}=\bigoplus_{k\in\NN}\ZZ[\frac12][x_1,\dots,x_k]^{\Sigma_k}$ be the ring of symmetric polynomials over $\ZZ[\frac12]$, then the set of  monomial symmetric polynomials   
\[m_\lambda=\frac{1}{\mul(\lambda)}\cdot \Sym_{\Sigma_k}\left(x^{\lambda_1}_1x^{\lambda_2}_2\cdots x^{\lambda_k}_k\right)\quad  (\lambda\in\cP)\]
form a basis of $\W^{(1)}$ over $\ZZ[\frac12]$. Since $\hbar\in \ZZ[\frac12]$, it follows from Lemma \ref{k times of r-powerof x1} and \eqref{eq:halllittlewoodpolynomial} that $P_\lambda\in \ZZ[\frac12][x_1,\dots,x_k]^{\Sigma_k}$, and 
\begin{equation}\label{eq:plambdamlambda}
P_\lambda=m_\lambda+\sum_{|\mu|<|\lambda|} a_{\lambda\mu} m_{\mu},\quad \text{in which}\quad  a_{\lambda\mu}\in\ZZ[\tfrac{1}{2}].  
\end{equation}
Thus $\{P_\lambda\}_{\lambda\in\cP}$ form also a $\ZZ[\frac12]$-basis of $\W^{(1)}$.


\section{Specialization map approach}\label{sec:specialization}

\subsection{Specialization maps}

Identifying each simple root $\alpha_{i}\ (1\leq i\leq n)$ with a basis element $\mathbf{1}_{i}\in\mathbb{N}^{n}$ (having 
the $i$-th coordinate equal to $1$ and the rest equal to $0$), we can view $\mathbb{N}^{n}$ as the positive cone of 
the root lattice of $\fg$. For any $\underline{k}\in\mathbb{N}^{n}$, let $\text{KP}(\underline{k})$ be the set of 
{\em Kostant partitions}, i.e.\ unordered vector partitions of $\underline{k}$ into a sum of positive roots. 
Explicitly, a Kostant partition of $\underline{k}$ is the same as a tuple 
$\unl{d}=\{d_{\beta}\}_{\beta\in\Delta^{+}}\in \NN^{\Delta^{+}}$ satisfying 
  $\sum_{i\in I} k_i\alpha_i \, = \sum_{\beta\in\Delta^{+}}d_{\beta}\beta$. The lexicographical order on $\Delta^{+}$ induces a total order on 
$\text{KP}(\underline{k})$:
\begin{equation}
\label{eq:KP-order}
  \{d'_\beta\}_{\beta\in \Delta^+}<\{d_\beta\}_{\beta\in \Delta^+}\Longleftrightarrow \exists\  \gamma\in \Delta^+\
  \mathrm{s.t.}\ d'_\gamma<d_\gamma\ \mathrm{and}\
  d'_\beta=d_\beta\ \mathrm{for\ all}\ \beta<\gamma.
\end{equation}

We now define the specialization maps for $W^{(n)}$. These maps are modifications of those in \cite{HT24,HT26} and are designed to remain valid in positive characteristic (cf. Section \ref{sec:modularshufflerealization}).  For any $F\in W^{(n)}_{\unl{k}}$ and 
$\underline{d}\in\text{KP}(\underline{k})$, we split the variables $\{x_{i,\ell}\}_{1\leq i\leq  n}^{1\leq \ell\leq k_{i}}$
into the disjoint union of $\sum_{\beta\in\Delta^{+}}d_{\beta}$ groups
\begin{equation}
\label{splitvariable}
  \bigsqcup^{1\leq s\leq d_{\beta}}_{\beta\in\Delta^{+}}
  \Big\{x^{(\beta,s)}_{i,t} \, \Big| \, 1\leq i\leq n, 1\leq t\leq \nu_{\beta,i}\Big\} \,,
\end{equation}
where the integer $\nu_{\beta,i}$ is the coefficient of $\alpha_i$ in $\beta$, cf. \eqref{eq:coefficientpositive}. For any $F\in W^{(n)}_{\unl{k}}$ 
and $\unl{d}\in\text{KP}(\unl{k})$, let $f$ be the numerator of $F$ from  \eqref{polecondition}. For each type, the specialization map 
$\phi_{\underline{d}}(F)$ is defined by successive specializations $\phi_{\beta,s}$ of the variables $x^{(\beta,s)}_{*,*}$ 
in $f$ for each $\beta\in\Delta^+$ and $1\leq s\leq d_{\beta}$ as follows:
\begin{itemize}[leftmargin=0.7cm]

\item 

$A_n$-type: for $\beta=[i,j]$ with $1\leq i\leq j\leq n$, $1\leq s\leq d_{\beta}$, we define $\phi_{\beta,s}(F)$ by specializing:
\begin{equation}\label{spe-A}
 x^{(\beta,s)}_{\ell,1}\mapsto w_{\beta,s}-\tfrac{\ell}{2},\qquad  (\ell\in\beta).
\end{equation}

\item 
$B_n$-type: for $\beta=[i,j]$ with $1\leq i\leq j\leq n$, $1\leq s\leq d_{\beta}$, we define $\phi_{\beta,s}(F)$ by specializing:
\begin{equation}\label{spe-B-1}
 x^{(\beta,s)}_{\ell,1}\mapsto w_{\beta,s}-\tfrac{\ell}{2},\qquad  ( \ell\in\beta).
\end{equation}
For $\beta=[i,n,j]$ with $1\leq i<j\leq n$, $1\leq s\leq d_{\beta}$, we first define $\phi^{(1)}_{\beta,s}(F)$ by specializing:
\begin{equation*}
 \left\{
    \begin{aligned}
        x^{(\beta,s)}_{\ell,1}&\mapsto w_{\beta,s}-\tfrac{\ell}{2}, \\
       x^{(\beta,s)}_{n,2}&\mapsto w_{\beta,s}-\tfrac{n-1}{2}, \\
        x^{(\beta,s)}_{\ell\neq n,2}&\mapsto w'_{\beta,s}-\tfrac{2n-\ell-1}{2}.
    \end{aligned}
    \right.
\end{equation*}
According to wheel conditions \eqref{wheel-Y}, $\phi^{(1)}_{\beta,s}(F)$ is divisible by 
\begin{equation*}
  B_{\beta}=\prod^{n-1}_{\ell=j}\big\{(w_{\beta,s}-w'_{\beta,s}+n-\ell-\tfrac{3}{2} )(w_{\beta,s}-w'_{\beta,s}+n-\ell+\tfrac{1}{2} )\big\}.
\end{equation*}
Then the overall specialization $\phi_{\beta,s}(F)$ is defined by
\begin{equation}
\label{spe-B-2}
  \phi_{\beta,s}(F)\coloneqq \phi^{(2)}_{\beta,s}\left(\phi^{(1)}_{\beta,s}(F)\right) = \left. \frac{\phi^{(1)}_{\beta,s}(F)}{B_{\beta}}\right|_{w'_{\beta,s}\mapsto w_{\beta,s}}.
\end{equation}
\item 
$C_n$-type: for $\beta=[i,j]$ with $1\leq i\leq j\leq n$, $1\leq s\leq d_{\beta}$, we define $\phi_{\beta,s}(F)$ by specializing:
\begin{equation}\label{spe-C-1}
 \left\{
    \begin{aligned}
        x^{(\beta,s)}_{\ell\neq n,1}&\mapsto w_{\beta,s}-\tfrac{\ell}{2}, \\
       x^{(\beta,s)}_{n,1}&\mapsto w_{\beta,s}-\tfrac{n+1}{2}. 
    \end{aligned}
    \right.
\end{equation}
For $\beta= [i,n,j]$ with $1\leq i<j<n$, $1\leq s\leq d_{\beta}$, we first define $\phi^{(1)}_{\beta,s}(F)$ by specializing :
\begin{equation*}
 \left\{
    \begin{aligned}
        x^{(\beta,s)}_{\ell\neq n,1}&\mapsto w_{\beta,s}-\tfrac{\ell}{2}, \\
       x^{(\beta,s)}_{n,1}&\mapsto w_{\beta,s}-\tfrac{n+1}{2} , \\
        x^{(\beta,s)}_{\ell\neq n,2}&\mapsto w'_{\beta,s}-\tfrac{2n+2-\ell}{2}.
    \end{aligned}
    \right.
\end{equation*}
According to wheel conditions \eqref{wheel-Y}, $\phi^{(1)}_{\beta,s}(F)$ is divisible by 
\begin{equation*}
  B_{\beta}=\prod^{n-1}_{\ell=j}(w_{\beta,s}-w'_{\beta,s}+n-\ell+2 )\prod^{n-2}_{\ell=j}(w_{\beta,s}-w'_{\beta,s}+n-\ell ).
\end{equation*}
Then the overall specialization $\phi_{\beta,s}(F)$ is defined by
\begin{equation}
\label{spe-C-2}
  \phi_{\beta,s}(F)\coloneqq \phi^{(2)}_{\beta,s}\left(\phi^{(1)}_{\beta,s}(F)\right) = \left. \frac{\phi^{(1)}_{\beta,s}(F)}{B_{\beta}}\right|_{w'_{\beta,s}\mapsto w_{\beta,s}}.
\end{equation}
For $\beta= [i,n,i]$ with $1\leq i<n$, we first define $\phi^{(1)}_{\beta,s}(F)$ by specializing:
\begin{equation*}
 \left\{
    \begin{aligned}
        x^{(\beta,s)}_{\ell\neq n,1}&\mapsto w_{\beta,s}-\tfrac{\ell}{2}, \\
       x^{(\beta,s)}_{n,1}&\mapsto w'_{\beta,s}-\tfrac{n+1}{2} , \\
        x^{(\beta,s)}_{\ell\neq n,2}&\mapsto w'_{\beta,s}-\tfrac{\ell}{2}.
    \end{aligned}
    \right.
\end{equation*}
According to wheel conditions \eqref{wheel-Y}, $\phi^{(1)}_{\beta,s}(F)$ is divisible by 
\begin{equation*}
  B_{\beta}=\big\{(w_{\beta,s}-w'_{\beta,s}+1 )(w_{\beta,s}-w'_{\beta,s}-1 )\big\}^{n-i-1}.
\end{equation*}
Then the overall specialization $\phi_{\beta,s}(F)$ is defined by
\begin{equation}
\label{spe-C-3}
  \phi_{\beta,s}(F)\coloneqq \phi^{(2)}_{\beta,s}\left(\phi^{(1)}_{\beta,s}(F)\right) = \left. \frac{\phi^{(1)}_{\beta,s}(F)}{B_{\beta}}\right|_{w'_{\beta,s}\mapsto w_{\beta,s}+1}.
\end{equation}

\item 
$D_n$-type: for $\beta \neq [i,n,j]\ (1\leq i<j\leq n-2)$, we define $\phi_{\beta,s}(F)$ by specializing:
\begin{equation}\label{spe-D-1}
 \left\{
    \begin{aligned}
        x^{(\beta,s)}_{\ell\neq n,1}&\mapsto w_{\beta,s}-\tfrac{\ell}{2} , \\
       x^{(\beta,s)}_{n,1}&\mapsto w_{\beta,s}-\tfrac{n-1}{2}.  
    \end{aligned}
    \right.
\end{equation}
For $\beta= [i,n,j]$ with $1\leq i<j\leq n-2$, we first define $\phi^{(1)}_{\beta,s}(F)$ by  specializing:
\begin{equation*}
 \left\{
    \begin{aligned}
        x^{(\beta,s)}_{\ell\neq n,1}&\mapsto w_{\beta,s}-\tfrac{\ell}{2}, \\
       x^{(\beta,s)}_{n,1}&\mapsto w'_{\beta,s}-\tfrac{n-1}{2} , \\
        x^{(\beta,s)}_{\ell\neq n-1\& n,2}&\mapsto w'_{\beta,s}-\tfrac{2n-2-\ell}{2}.
    \end{aligned}
    \right.
\end{equation*}
According to wheel conditions \eqref{wheel-Y}, $\phi^{(1)}_{\beta,s}(F)$ is divisible by 
\begin{equation*}
  B_{\beta}=\prod^{n-2}_{\ell=j}(w_{\beta,s}-w'_{\beta,s}-n+\ell+2 )(w_{\beta,s}-w'_{\beta,s}-n+\ell ).
\end{equation*}
Then, the overall specialization $\phi_{\beta,s}(F)$ is defined by:
\begin{equation}
\label{spe-D-2}
  \phi_{\beta,s}(F)\coloneqq \phi^{(2)}_{\beta,s}\left(\phi^{(1)}_{\beta,s}(F)\right) = \left. \frac{\phi^{(1)}_{\beta,s}(F)}{B_{\beta}} \right|_{w'_{\beta,s}\mapsto w_{\beta,s}}.
\end{equation}

\end{itemize}

For $\unl{d}\in \mathrm{KP}(\unl{k})$, the specialization map $\phi_{\underline{d}}(F)$ is defined by applying those 
separate maps $\phi_{\beta,s}$ in each group $\big\{x^{(\beta,s)}_{i,t}\big\}^{1\leq t\leq \nu_{\beta,i}}_{1\leq i\leq n}$ 
of variables (the result is independent of splitting):
\begin{equation*}
  \phi_{\underline{d}}\colon W^{(n)}_{\unl{k}}\longrightarrow
  \CC[\{w_{\beta,s}\}_{\beta\in\Delta^{+}}^{1\leq s\leq d_{\beta}}]^{\Sigma_{\unl{d}}},
\end{equation*}
and we extend it by zero to all other components $W^{(n)}_{\unl{\ell}}$ with $\unl{\ell}\ne \unl{k}$.

For any $h\in \mathcal{H}$, we define its \emph{degree} $\text{deg}(h)\in \NN^{\Delta^{+}}$ as the Kostant partition
$\unl{d}=\{d_{\beta}\}_{\beta\in\Delta^{+}}$ with $d_{\beta}=\sum_{r\in \mathbb{N}} h(\beta,r)\in \NN$ for all
$\beta\in \Delta^{+}$, and the \emph{grading} $\text{gr}(h)\in \NN^n$ so that
$\text{deg}(h)\in\text{KP}(\text{gr}(h))$. For any $\unl{k}\in\NN^{n}$ and $\unl{d}\in\text{KP}(\unl{k})$,
we define the following subsets of $\mathcal{H}$:
\begin{equation*}
  \mathcal{H}_{\underline{k}}\coloneqq \big\{h\in \mathcal{H} \, \big| \,  \text{gr}(h)=\underline{k}\big\}, \qquad
  \mathcal{H}_{\underline{k},\underline{d}}\coloneqq \big\{h\in \mathcal{H} \, \big| \, \text{deg}(h)=\underline{d}\big\}.
\end{equation*}
For any $h\in \mathcal{H}_{\unl{k},\unl{d}}$ and $\beta\in\Delta^{+}$, we consider the partition
\begin{equation}\label{eq:lambda-collection}
  \lambda_{h,\beta}=\big\{r_{\beta}(h,1)\geq \dots\geq r_{\beta}(h,d_{\beta})\big\}
\end{equation}
obtained by listing all integers $r\in\mathbb{N}$ with multiplicity $h(\beta,r)>0$ in the decreasing order. The key properties of $\phi_{\unl{d}}$ are summarized in the following lemmas.

Same as \cite[Example 6.13]{Tsy21} \cite[Lemma 5.5]{HT24} \cite[Lemmas 5.2--5.3]{HT26}, with the above modified specialization maps and \eqref{eq:psirv-1}--\eqref{eq:psirv-2}, we have
\begin{Lemma}
\label{phiyangianrs}
If $\mathfrak{g}$ is a  simple Lie algebra of classical type, then we have:
\begin{equation*}
  \phi_{\beta,1}(\Psi(e_{\beta}(r)))\doteq
  (w_{\beta,1}-\kappa_{\beta})^r \qquad \forall\, (\beta,r)\in\Delta^{+}\times \NN,
\end{equation*}
where $\kappa_{\beta}\in \ZZ[\tfrac{1}{2}]$ is given by
\[\kappa_{\beta}=\begin{cases}
\frac{i}{2}  & \text{for}\  \beta=[i,j], [i,n,j]\  \text{in any type, except }\beta=[i,n]\text{ in type }B_n, \\
\frac{n}{2} & \text{for}\  \beta=[i,n]\ \text{in type}\ B_n,\\
\frac{n-1}{2} & \text{for}\  \beta=[i,n,i]\ \text{in type}\ C_n.
\end{cases}\]
\end{Lemma}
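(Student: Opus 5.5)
The plan is a direct computation. Take the explicit formulas \eqref{eq:psirv-1}--\eqref{eq:psirv-2} for $\Psi(e_\beta(r))$ and apply the specialization $\phi_{\beta,1}$ to the numerator. Each formula has the form $x_{\ast,1}^r\cdot P_\beta/\mathrm{denom}_\beta$. Here $x_{\ast,1}$ is $x_{i,1}$ except for $[i,n]$ in type $B_n$ and $[i,n,i]$ in type $C_n$, where it is $x_{n,1}$. The factor $P_\beta$ is $1$, a product of factors $(x_{\ell,1}-x_{\ell,2}\pm1)$, or a product of $Q$'s possibly times $(2x_{j-1,1}-x_{j,1}-x_{j,2})$. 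The numerator $f$ is therefore $x_{\ast,1}^r P_\beta$, up to a factor in $\ZZ[\frac12]^\times$.

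The first step evaluates $x_{\ast,1}^r$ under the specialization.

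\begin{itemize}[leftmargin=0.7cm]
\item For $\beta=[i,j]$ and for $\beta=[i,n,j]$, the variable $x^{(\beta,1)}_{i,1}$ always maps to $w_{\beta,1}-\frac i2$, by \eqref{spe-A}, \eqref{spe-B-1}, \eqref{spe-C-1}, \eqref{spe-D-1} and the first lines of the $\phi^{(1)}$ maps. Since $i\ne n$ in the relevant $C,D$ cases, this gives $(w_{\beta,1}-\frac i2)^r$.
\item For $[i,n]$ in type $B_n$, the variable $x_{n,1}\mapsto w-\frac n2$ gives $\kappa=\frac n2$.
\item For $[i,n,i]$ in type $C_n$, the variable $x_{n,1}\mapsto w'-\frac{n+1}{2}$, and then $w'\mapsto w+1$. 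This gives $w-\frac{n-1}{2}$, matching $\kappa_\beta$.
\end{itemize}

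This factor does not involve $w'$, so it is unaffected by division by $B_\beta$.

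The second step shows that $\phi_{\beta,1}(P_\beta)$ is an element of $\ZZ[\frac12]^\times$. For the root vectors $[i,j]$ with $P_\beta=1$ this is immediate. In the other cases I would compute $\phi^{(1)}$ of each factor of $P_\beta$ separately.

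For $B_n$, substituting $x_{\ell,1}=w-\frac\ell2$ and $x_{\ell,2}=w'-\frac{2n-\ell-1}{2}$ gives $x_{\ell,1}-x_{\ell,2}+1=w-w'+n-\ell+\frac12$ and $x_{\ell,2}-x_{\ell,1}+1=-(w-w'+n-\ell-\frac32)$. These are exactly the factors of $B_\beta$, so the quotient is $\pm1$.

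For $D_n$ the same matching occurs with the shifted indices, giving the factors $(w-w'-n+\ell+2)(w-w'-n+\ell)$ up to sign.

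For $C_n$, each $Q(x_{\ell,1},x_{\ell,2},x_{\ell+1,1},x_{\ell+1,2})$ specializes to a quadratic in $u=w-w'$. I would factor it and check that it equals a product of two of the linear factors of $B_\beta$, up to a power of $2$. The extra factor $2x_{j-1,1}-x_{j,1}-x_{j,2}$, together with the endpoint factor involving $x_{n,1}$, should account for the remaining linear factors. For $[i,n,i]$ the product of $Q$'s should give $\{(u+1)(u-1)\}^{n-i-1}$ up to a constant.

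After the cancellation, the final substitution $w'\mapsto w$ (resp.\ $w+1$) leaves a nonzero constant in $\ZZ[\frac12]^\times$. One also needs to note that the ordering conventions in $\mathrm{denom}_\beta$ and the symmetrization are absent, since $d_\beta=1$ for $s=1$.

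The main obstacle is the type $C_n$ bookkeeping. The task is to verify that the specialized $Q$-product and the linear prefactor divide exactly into $B_\beta$, leaving a power of $2$ rather than some other rational constant. I would do this by substituting directly into \eqref{eq:Qpolyonimal}. With $x_1=w-\frac\ell2$, $y_1=w-\frac{\ell+1}2$, $x_2=w'-\frac{2n+2-\ell}2$, $y_2=w'-\frac{2n+1-\ell}2$, the expression should collapse to $(2u+a)(2u+b)$ for explicit integers $a,b$. Its leading coefficient $4$ then accounts for the power of $2$. The same substitution handles the $[i,n,i]$ case.
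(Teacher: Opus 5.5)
Your proposal is correct and is essentially the paper's own argument: the paper simply defers to the same direct substitution of \eqref{eq:psirv-1}--\eqref{eq:psirv-2} into the modified specialization maps, as in the works of Tsymbaliuk and Hu--Tsymbaliuk; indeed, with $u=w_{\beta,1}-w'_{\beta,1}$, your type $C_n$ substitution gives $Q\mapsto -2(u+n-\ell)(u+n-\ell+1)$ (resp.\ $-2(u-1)(u+1)$ for $[i,n,i]$) and $2x_{j-1,1}-x_{j,1}-x_{j,2}\mapsto u+n-j+2$, which together account for $B_\beta$ exactly up to a power of $-2$, so no additional ``endpoint factor'' involving $x_{n,1}$ exists or is needed. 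Two small slips come from the text rather than your method: for $\beta=[n]$ in type $C_n$ one has $i=n$ and $x_{n,1}\mapsto w_{\beta,1}-\frac{n+1}{2}$, so your claim ``$i\neq n$'' fails there and the computation yields $\kappa_{[n]}=\frac{n+1}{2}$ rather than $\frac n2$; and in type $D_n$ the specialized numerator factors are $(u+n-\ell)(u+n-\ell-2)$ up to sign, not the displayed $B_\beta$ factors, which need $w$ and $w'$ interchanged.
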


The above modified specialization maps still satisfy the three main properties (cf. \cite{Tsy21,HT24,HT26}  mentioned above) listed below, as their proofs rely only on analyzing the $\zeta$-factors, unaffected by the above modification.

\begin{Lemma}\label{lem:spepsipbwbasis}
For  any $h\in \mathcal{H}_{\unl{k},\unl{d}}$, we have
\begin{equation}\label{eq:spephidEh}
  \phi_{\unl{d}}(\Psi(E_{h}))\doteq
\prod_{\beta,\beta'\in \Delta^+}^{\beta<\beta'}{G}_{\beta,\beta'}\cdot \prod_{\beta\in\Delta^{+}}{G}_{\beta}\cdot
  \prod_{\beta\in\Delta^{+}}\hat{P}_{\lambda_{h,\beta}},
\end{equation}
where ${G}_{\beta,\beta'}, {G}_{\beta}$ are products of linear factors $w_{\beta,s}-\ZZ[\tfrac{1}{2}]\cdot w_{\beta',s'}$ and independent of $h\in \mathcal{H}_{\unl{k},\unl{d}}$ (computed explicitly in the work cited above), while
\begin{equation}
\label{hlp-rat}
 \hat{P}_{\lambda_{h,\beta}}=\Sym_{\Sigma_{d_{\beta}}}
\left(\prod_{s=1}^{d_{\beta}}(w_{\beta,s}-\kappa_{\beta})^{r_{\beta}(h,s)}
  \prod_{1\leq s<r\leq d_{\beta}}\frac{w_{\beta,s}-w_{\beta,r}+\frac{(\beta,\beta)}{2}}{w_{\beta,s}-w_{\beta,r}}\right).
\end{equation}    
\end{Lemma}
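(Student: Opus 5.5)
We prove Lemma \ref{lem:spepsipbwbasis} by the same strategy as \cite[Lemma 6.16]{Tsy21}, \cite[Lemma 5.6]{HT24}, \cite[Lemma 5.4]{HT26}. The plan is to expand $\Psi(E_h)$ as an iterated shuffle product, to show that only one class of terms in the symmetrization survives $\phi_{\unl d}$, and then to compute what those terms give.

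\emph{Step 1: expand the product.} Since $\Psi$ is an algebra homomorphism, $\Psi(E_h)$ is the ordered $\star$-product of the elements $\Psi(e_\beta(r))^{\star h(\beta,r)}$, taken in the order \eqref{eq:order-Delta-N}. By \eqref{shuffle product-1}, this is a sum over shuffles. Each summand assigns the variables of $W^{(n)}_{\unl k}$ to the consecutive factors $e_{\beta_1}(r_1),\dots,e_{\beta_N}(r_N)$. It equals the product of the functions \eqref{eq:psirv-1}--\eqref{eq:psirv-2} in their assigned variables, times $\zeta$-factors $\zeta_{ii'}(x-x')$ for every pair of variables that belong to an earlier factor and a later factor respectively.

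\emph{Step 2: vanishing.} We multiply by the common denominator and apply $\phi_{\unl d}$. We must show that a summand vanishes unless, for every $\beta$ and $s$, the variables of the group $x^{(\beta,s)}$ are assigned entirely to one factor $e_\beta(r)$ carrying the same root $\beta$. The argument is an induction along the convex (lexicographic) order on $\Delta^+$. Take the smallest $\beta$ whose groups are split among several factors, or are assigned to factors with roots $\beta'>\beta$. After specialization, some pair of variables $x_{\ell,*}$, $x_{\ell+1,*}$ (or the analogous pair in the doubled-node part of types $B,C,D$) sits at the shift $\pm\frac{d_\ell c_{\ell,\ell+1}}{2}$ that makes a numerator $\zeta$-factor $x-x'+\frac{d_ic_{ij}}{2}$ vanish. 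For the groups with two-step specializations $\phi^{(1)},\phi^{(2)}$, we must also check that we do not divide by a vanishing factor. The point is that the divided-out $B_\beta$ depends only on $w_{\beta,s}-w'_{\beta,s}$ coming from the wheel conditions, and that this vanishing factor is unchanged by the modification. In fact, since the modified maps differ from \cite{HT24,HT26} only by a shift and rescaling of the $w$-variables (half-integer shifts $\tfrac{\ell}{2}$ instead of integer ones), each step of the original argument transfers verbatim.

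\emph{Step 3: the surviving terms.} In the surviving terms, each group $x^{(\beta,s)}$ is assigned to a single factor $e_\beta(r)$. First, Lemma \ref{phiyangianrs} gives that factor's contribution $(w_{\beta,s}-\kappa_\beta)^r$ up to $\doteq$. Second, the cross $\zeta$-factors between groups with distinct roots $\beta<\beta'$ specialize to products of linear forms in $w_{\beta,s}-w_{\beta',s'}$, which we denote $G_{\beta,\beta'}$. Third, the denominators together with the $B$-divisions contribute $G_\beta$; neither of these depends on $r$. Finally, the $\zeta$-factors between two groups with the same root $\beta$ specialize to $\frac{w_{\beta,s}-w_{\beta,r}+(\beta,\beta)/2}{w_{\beta,s}-w_{\beta,r}}$, multiplied by an $h$-independent product that we absorb into $G_\beta$. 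Summing over the permutations of $\{1,\dots,d_\beta\}$ then reproduces the symmetrization $\hat P_{\lambda_{h,\beta}}$ of \eqref{hlp-rat}, in the same way as in Lemma \ref{k times of r-powerof x1}.

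\emph{Main obstacle.} The hard part is the bookkeeping in the last computation. We must confirm that the constant in the same-root $\zeta$-ratio equals exactly $(\beta,\beta)/2$, that the overall constants lie in $\pm 2^{\ZZ}$, and that this holds for the modified two-step specializations in types $B,C,D$, in particular for $[i,n,i]$ with the shift $w'\mapsto w+1$. We plan to verify this type by type, by reducing to the rank-two computations already carried out in \cite{HT24,HT26}.
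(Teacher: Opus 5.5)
Your proposal follows essentially the same route as the paper. The paper gives no separate argument; it simply invokes the $\zeta$-factor analysis of \cite{Tsy21,HT24,HT26} as unaffected by the modified specializations, and you rightly single out the genuinely new content, namely that all constants lie in $\pm2^{\ZZ}$, which reduces to Lemma~\ref{phiyangianrs}. Do not, however, justify the transfer by calling the modification a mere shift or rescaling of the $w$-variables, since such a change would be irrelevant for $p>3$: what matters is that $B_\beta$ is removed by polynomial division before $w'_{\beta,s}$ is specialized, so $\phi^{(2)}_{\beta,s}$ cannot be applied summand by summand, because $B_\beta$ vanishes at the specialization point (e.g.\ at $w'_{\beta,s}=w_{\beta,s}$ in type $D$, and at $w'_{\beta,s}=w_{\beta,s}+1$ for $[i,n,i]$ in type $C$) and in general only the surviving summands carry it, via the numerator of $\Psi(e_\beta(r))$.
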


\begin{Lemma}\label{lem:spefiltration}
 For any $h\in \mathcal{H}_{\unl{k},\unl{d}}$ and $\unl{d}'<\unl{d}$, cf.~\eqref{eq:KP-order}, we have $\phi_{\unl{d}'}(\Psi(E_{h}))=0$.   
\end{Lemma}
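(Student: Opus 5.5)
The plan is to argue directly from the shuffle product formula \eqref{shuffle product-1} and the definition of the specialization maps, following the strategy of \cite[Lemma 6.16]{Tsy21}, \cite[Lemma 5.7]{HT24} and \cite[Lemma 5.4]{HT26}. Since $\Psi$ is an algebra homomorphism,
\[
\Psi(E_h)=\Psi(e_{\beta_1}(r_1))\star\cdots\star\Psi(e_{\beta_m}(r_m)),
\]
where the factors are listed according to the order \eqref{eq:order-Delta-N}. By \eqref{shuffle product-1}, this product is a sum over shuffles. Each summand distributes the variables $\{x_{i,\ell}\}$ among the $m$ factors: the $a$-th factor receives a set of variables of ``colour type'' $\beta_a$, meaning $\nu_{\beta_a,i}$ variables of colour $i$. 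The summand is then
\[
\prod_a \Psi(e_{\beta_a}(r_a))\cdot\prod_{a<b}\zeta\text{-factors}.
\]
Fix $\unl d'<\unl d$ and a splitting \eqref{splitvariable} of the variables into groups of types $\beta$ with multiplicities $d'_\beta$. We apply $\phi_{\unl d'}$ to each summand separately, after clearing denominators: we multiply by $\mathrm{denom}$ and observe that the specialization kills the appropriate factors. The goal is to show that every summand specializes to zero.

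\textbf{Combinatorial step.} Let $\gamma$ be the smallest root with $d'_\gamma<d_\gamma$; by minimality, $d'_\beta=d_\beta$ for $\beta<\gamma$. Take a summand and consider how the groups $x^{(\beta,s)}$ of $\phi_{\unl d'}$ are distributed among the factors $\Psi(e_{\beta_a}(r_a))$. The key fact to use is the convexity of the Lyndon order (\cite[Proposition 26]{Lec04}). A group of type $\beta$ meeting several factors must meet a factor whose root is $\geq\beta$, or rather it splits along a decomposition of $\beta$ into pieces. Counting variables of the roots $\leq\gamma$ then produces one of two situations. Either some specialization group of type $\beta$ is split between two factors $a<b$ in such a way that the specialized $\zeta$-factor $\zeta_{ij}(x_{i}-x_{j})$, with $x_i$ in factor $a$ and $x_j$ in factor $b$, vanishes. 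This happens because the shift $w-\ell/2$ makes consecutive variables differ by exactly $-d_ic_{ij}/2$, so the numerator $x_i-x_j+d_ic_{ij}/2$ is zero. Or some factor $\Psi(e_{\beta_a}(r_a))$ receives a collection of specialized variables on which its own numerator vanishes. The latter is read off from the explicit formulas \eqref{eq:psirv-1}--\eqref{eq:psirv-2}: the extra polynomial factors, such as $(x_{\ell,1}-x_{\ell,2}+1)(x_{\ell,2}-x_{\ell,1}+1)$ or $Q$, vanish at the specialized points.

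\textbf{Modifications and obstacle.} The modified shifts and the modified divisors $B_\beta$ must then be checked. The vanishing in the previous step is a statement about differences of specialized variables. Our modification shifts all variables of a group uniformly in $w$, and keeps the same relative shifts as in \cite{HT24,HT26} up to the change of the $x_{n}$-coordinate in types $C,D$ and of the second copy in type $B$. So the same linear factors vanish, and they are not among those divided out by $B_\beta$. For two-step specializations, we argue that the vanishing already occurs in $\phi^{(1)}_{\beta,s}$ to higher order than $B_\beta$ accounts for. Alternatively, it occurs in a factor not involving $w'_{\beta,s}$, so it survives division and the substitution $w'\mapsto w$ (or $w'\mapsto w+1$ in \eqref{spe-C-3}).

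I expect this to be the main obstacle: tracking, for the roots $[i,n,j]$ and $[i,n,i]$, that the vanishing factor is not cancelled by $B_\beta$. I would handle it by a case check over the ways a root of type $[i,n,j]$ can be cut into two positive roots. There are few such cuts, namely $[i,k]+[k+1,n,j]$ and $[i,n,k]+[j,k-1]$. For each cut, I would exhibit the vanishing $\zeta$-factor explicitly.
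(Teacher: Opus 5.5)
Your overall plan, showing that every summand of the symmetrization $\Psi(E_h)$ is killed by $\phi_{\unl{d}'}$, is the strategy the paper relies on. The paper does not reprove the lemma: it invokes \cite{Tsy21,HT24,HT26} and remarks that those arguments analyze only specialized $\zeta$-factors, which the modified shifts leave intact. Your sketch, however, has a genuine gap at the one step that carries the content. The dichotomy is only asserted; ``counting variables of the roots $\le\gamma$'' is never turned into an argument that produces a vanishing factor.

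Moreover, branch (ii) is not a real mechanism. The extra numerator factors of $\Psi(e_{[i,n,j]}(r))$ and $\Psi(e_{[i,n,i]}(r))$ are exactly what $B_\beta$ divides out. For instance, in type $B_n$, take the two colour-$\ell$ variables of a single $[i,n,j]$-group and write $w,w'$ for $w_{\beta,s},w'_{\beta,s}$. Then $(x_{\ell,1}-x_{\ell,2}+1)(x_{\ell,2}-x_{\ell,1}+1)$ specializes to $-(w-w'+n-\ell+\frac12)(w-w'+n-\ell-\frac32)$, which is two factors of $B_\beta$. In type $C_n$ the factors $Q$ and $2x_{j-1,1}-x_{j,1}-x_{j,2}$ likewise reproduce $B_\beta$ up to a unit. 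This is precisely why Lemma~\ref{phiyangianrs} gives a nonzero value. When the variables come from different groups, these factors specialize to nonzero polynomials. So your claim that the vanishing factors ``are not among those divided out by $B_\beta$'' fails for them. Only $\zeta$-factors can do the job, as the paper says.

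The missing idea is the monotonicity argument of the cited works. Each group's specialization is built so that certain pairs $(x,y)$ in the group, Dynkin-adjacent or of equal colour, satisfy $x-y=-\frac{d_ic_{ij}}{2}$. This holds either identically or after $w'\mapsto w$ (resp.\ $w'\mapsto w+1$), and these pairs connect the whole group. Hence in a surviving summand the factor index cannot increase along such a pair. One must then show that such a distribution forces $\unl{d}'\ge\unl{d}$.

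In type $A$ this goes as follows. The colour-$i$ variable of the first factor $\Psi(e_{[i,j]}(r_1))$ lies in some group $[a,b]$, and that group's letters $i+1,\dots,b$ must also sit in the first factor. So $b\le j$ and $[a,b]\le[i,j]$. Strict inequality would give $\unl{d}'>\unl{d}$, so the group fills the first factor exactly; remove both and induct.

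For $B$, $C$, $D$ you must also count multiplicities of the vanishing. In type $D_n$, $B_\beta$ itself contains $w-w'$ (the $\ell=n-2$ term). In type $C_n$, for $[i,n,i]$, it contains $(w-w'+1)^{n-i-1}$, which vanishes at $w'=w+1$. So one cross-link $\zeta$-factor vanishing only at the second step is not enough, and summand-wise division by $B_\beta$ can even create poles. For example, in $D_4$ with $\deg(h)=\{[1,3],[2,4]\}$ and $\unl{d}'=\{[1,4,2]\}$, the summand putting the $w$-part into $\Psi(e_{[1,3]}(r))$ has $\zeta$-numerators containing $(w-w')^2$, and one power is absorbed by $B_\beta$.

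Your proposed check over two-piece cuts of $[i,n,j]$ into positive roots does not address any of this, for four reasons:
\begin{enumerate}
\item a group can be spread over many factors;
\item the piece of a group landing in one factor need not be a root;
\item the groups $[i,j]$ and $[i,n,i]$ also need treatment;
\item the list of cuts is incomplete; it misses, e.g., $[i,n]+[j,n]$ in type $B_n$ and $[i,n-1]+[j,n]$ in types $C_n$ and $D_n$.
\end{enumerate}
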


\begin{Lemma}\label{prop:mainpropertyspe}
 Let $W'_{\unl{k}}$ be the subspace of $W^{(n)}_{\unl{k}}$ spanned by $\{\Psi(E_{h})\}_{h\in \mathcal{H}_{\unl{k}}}$.  For any $F\in W^{(n)}_{\unl{k}}$ and $\unl{d}\in \text{\rm KP}(\unl{k})$, if $\phi_{\unl{d}'}(F)=0$ for all
$\unl{d}'\in \text{\rm KP}(\unl{k})$ such that $\unl{d}'<\unl{d}$, then there exists $F_{\unl{d}}\in W'_{\unl{k}}$
such that $\phi_{\unl{d}}(F)=\phi_{\unl{d}}(F_{\unl{d}})$ and $\phi_{\unl{d}'}(F_{\unl{d}})=0$ for all $\unl{d}'<\unl{d}$.
\end{Lemma}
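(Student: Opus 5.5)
The plan follows the argument of \cite[Lemma 6.21]{Tsy21}, \cite[Proposition 5.9]{HT24} and \cite[Proposition 5.8]{HT26}. The only new input is that the modified specializations $\phi_{\beta,s}$ still lead to Lemma \ref{lem:spepsipbwbasis}, with the shifted polynomials $\hat P_{\lambda}$ in place of the usual ones.

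\textbf{Step 1: the shape of $\phi_{\unl{d}}(F)$.} Fix $F\in W^{(n)}_{\unl{k}}$ with $\phi_{\unl{d}'}(F)=0$ for all $\unl{d}'<\unl{d}$. We show that $\phi_{\unl{d}}(F)$ is divisible by the product $\prod_{\beta<\beta'}G_{\beta,\beta'}\cdot\prod_\beta G_\beta$ appearing in \eqref{eq:spephidEh}. The factors in $G_\beta$ come from the wheel conditions \eqref{wheel-Y}. After specialization, any two groups $(\beta,s),(\beta,s')$ of the same root contain variable configurations on which the numerator $f$ vanishes; since we divided by $B_\beta$ only along the diagonal $w'=w$ (respectively $w'=w+1$), these vanishing loci survive as linear factors in $w_{\beta,s}-w_{\beta,s'}$. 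The factors in $G_{\beta,\beta'}$ for $\beta<\beta'$ come from two sources. Some are again wheel conditions. The rest come from the hypothesis $\phi_{\unl{d}'}(F)=0$: setting $w_{\beta,s}$ and $w_{\beta',s'}$ into the appropriate relative position merges the two groups into a specialization of $F$ factoring through $\phi_{\unl{d}'}$ for a smaller Kostant partition $\unl{d}'$. That smaller partition is obtained by replacing $\beta+\beta'$ by $\gamma+\gamma'$ with $\gamma<\beta$, which is possible by convexity of the Lyndon order. The vanishing of $\phi_{\unl{d}'}(F)$ then gives the required linear factor. Since the construction of each factor only tracks which linear forms in the $x$'s become equal, the shift in the modified maps only relabels the factors. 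I would carry this out type by type using the explicit $G$'s from the cited works.

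\textbf{Step 2: express the quotient in the $\hat P$ basis.} After Step 1, the quotient $\phi_{\unl{d}}(F)/(\prod G_{\beta,\beta'}\prod G_\beta)$ is a polynomial, symmetric in each block $\{w_{\beta,s}\}_{s}$, because both numerator and divisor are $\Sigma_{\unl{d}}$-invariant up to sign, consistently. Fix $\beta$ and substitute $w=\kappa_\beta+\hbar$-shifted variables. Then the $\hat P_\lambda$ of \eqref{hlp-rat} become the Hall--Littlewood polynomials $P_\lambda$ of \eqref{eq:halllittlewoodpolynomial} with $\hbar=\frac{(\beta,\beta)}{2}$, in the variables $w_{\beta,s}-\kappa_\beta$. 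By \eqref{eq:plambdamlambda} these form a basis of symmetric polynomials, already over $\ZZ[\frac12]$. Taking tensor products over $\beta$, the quotient is a finite linear combination $\sum_h c_h\prod_\beta\hat P_{\lambda_{h,\beta}}$ over $h\in\mathcal{H}_{\unl{k},\unl{d}}$.

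\textbf{Step 3: conclude.} Set $F_{\unl{d}}:=\sum_h c'_h\Psi(E_h)$, where $c'_h$ is $c_h$ corrected by the $\doteq$-constants. This element lies in $W'_{\unl{k}}$. Lemma \ref{lem:spepsipbwbasis} gives $\phi_{\unl{d}}(F_{\unl{d}})=\phi_{\unl{d}}(F)$, and Lemma \ref{lem:spefiltration} gives $\phi_{\unl{d}'}(F_{\unl{d}})=0$ for all $\unl{d}'<\unl{d}$.

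The main obstacle is Step 1, specifically obtaining the factors of $G_{\beta,\beta'}$ from the vanishing of lower specializations for the non-simply-laced roots $[i,n,j]$ and $[i,n,i]$, where the two-step maps $\phi^{(2)}\circ\phi^{(1)}$ intervene. There I would first check, root pair by root pair, that the relevant hyperplane is not one of those removed by $B_\beta$. If some factor cannot be obtained this way, I would fall back on comparing degrees with the explicit formula for $\phi_{\unl{d}}(\Psi(E_h))$.
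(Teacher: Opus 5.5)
Your proposal follows the same route as the paper. The paper defers to Tsymbaliuk's argument and spells it out in the proof of Lemma~\ref{lem:span}: show that $\phi_{\unl{d}}(F)$ is divisible by $\prod_{\beta<\beta'}G_{\beta,\beta'}\prod_\beta G_\beta$ (using the wheel conditions and the vanishing of the lower specializations), expand the quotient in the Hall--Littlewood basis $\{\prod_\beta\hat P_{\lambda_{h,\beta}}\}$, and conclude with Lemmas~\ref{lem:spepsipbwbasis}--\ref{lem:spefiltration}.

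One correction to your Step~1 concerns the order \eqref{eq:KP-order}. When the $(\beta,s)$ and $(\beta',s')$ strings are re-split, the new roots must all be larger than $\beta$; by convexity they lie between $\beta$ and $\beta'$, e.g.\ $[1]+[2]$ becomes $[1,2]$ in type $A_2$. Inserting a root $\gamma<\beta$, as you wrote, would give $\unl{d}'>\unl{d}$, and the hypothesis would not apply.
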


\subsection{Integral form $\Y^>_n$ and its shuffle algebra realization}\label{subsection:integralformddd}

\subsubsection{$\ZZ[\tfrac{1}{2}]$-form of enveloping algebra}

Following \cite[Chapter VII]{Hum72}, we briefly recall the Kostant
$\ZZ$-form of the universal enveloping algebra $U(\mathfrak{g})$ for
a classical simple Lie algebra $\mathfrak{g}$ of type $A_n,B_n,C_n,D_n$.
We retain the Chevalley basis
$\{x_\beta\}_{\beta\in\Delta}\cup\{h_i\}_{i=1}^{n}$ introduced in (\ref{eq:chevalleybasis-A},  \ref{eq:chevalleybasis-BCD}). For $\beta\in\Delta^{+}$ and $t\in\NN$, we set
\[
x_\beta^{(t)}:=\frac{x_\beta^{t}}{t!}.
\]
The Kostant $\ZZ$-form $\U(\mathfrak{g})$ is the subring of
$U(\mathfrak{g})$ generated by
\[
\bigl\{\,x_\beta^{(t)},\; x_{-\beta}^{(t)}\;\big|\;
\beta\in\Delta^{+},\; t\in\NN \,\bigr\}.
\]
Let $\U^{>}(\mathfrak{g})$ be the subring generated by the divided
powers of the positive root vectors alone:
$\{x_\beta^{(t)}\mid \beta\in\Delta^{+},\;t\in\NN\}$. The following lemma collects the commutation formulas for divided
powers of positive root vectors, cf. \cite[(9.1.2)]{CW15}.
We write $\alpha,\beta$ for two linearly independent positive roots and
$\Gamma_{\alpha,\beta}=\{i\alpha+j\beta\mid i,j\in\ZZ^{+},\;i\alpha+j\beta\in\Delta^{+}\}$.

\begin{Lemma}\label{lem:commutation-divided-powers}
In $\U^{>}(\mathfrak{g})$, the following relations hold for all $r,t\in\NN$.
\begin{enumerate}[leftmargin=0.7cm]
\item If $\Gamma_{\alpha,\beta}=\{\alpha,\beta\}$, then
  \begin{equation}\label{eq:comm-case1}
  x_\beta^{(r)}x_\alpha^{(t)} = x_\alpha^{(t)}x_\beta^{(r)}.
  \end{equation}
\item If $\Gamma_{\alpha,\beta}=\{\alpha,\beta,\alpha+\beta\}$, then
  \begin{equation}\label{eq:comm-case2}
  x_\beta^{(r)}x_\alpha^{(t)}
  = \sum_{i=0}^{\min(r,t)} N_{\beta\alpha}^{i}\,
    x_\alpha^{(t-i)}x_\beta^{(r-i)}x_{\alpha+\beta}^{(i)},
  \end{equation}
  
\item If $\Gamma_{\alpha,\beta}=\{\alpha,\beta,\alpha+\beta,\alpha+2\beta\}$, then
  \begin{equation}\label{eq:comm-case3}
  x_\beta^{(r)}x_\alpha^{(t)}
  = \sum^{i+2j\leq r}_{i+j\leq t} 2^{-j} N_{\beta\alpha}^{i+j} N_{\alpha+\beta,\beta}^{j}\,
    x_\alpha^{(t-i-j)}x_\beta^{(r-i-2j)}x_{\alpha+\beta}^{(i)}x_{\alpha+2\beta}^{(j)}.
  \end{equation}
\item If $\Gamma_{\alpha,\beta}=\{\alpha,\beta,\alpha+\beta,2\alpha+\beta\}$, then
  \begin{equation}\label{eq:comm-case4}
  x_\beta^{(r)}x_\alpha^{(t)}
  = \sum^{i+2j\leq t}_{i+j\leq r} 2^{-j} N_{\beta\alpha}^{i+j} N_{\alpha+\beta,\alpha}^{j}\,
    x_{\alpha}^{(t-i-2j)}x_{\beta}^{(r-i-j)}x_{\alpha+\beta}^{(i)}x_{2\alpha+\beta}^{(j)}.
  \end{equation}

\end{enumerate}

Here $N_{\beta\alpha}$ is the Chevalley structure constant: $[x_\beta,x_\alpha]=N_{\beta\alpha}x_{\alpha+\beta}$, which is $\pm1$ or $\pm2$ in the Chevalley basis that we have chosen in subsection \ref{subsec:classicalliealgebra}.
\end{Lemma}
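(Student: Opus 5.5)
The four formulas are identities in $U(\mathfrak{g})$ between elements of the positive part, so I will prove them there. The plan is to expand $x_\beta^{r}x_\alpha^{t}$ by iterated commutation and then divide by $r!\,t!$. The basic tool is the standard identity
\[
 x_\beta^{r}\,y=\sum_{k=0}^{r}\binom{r}{k}\bigl((\mathrm{ad}\,x_\beta)^k y\bigr)\,x_\beta^{r-k},
\]
valid in any associative algebra. I will apply it with $y$ a monomial in $x_\alpha$ and the higher roots of $\Gamma_{\alpha,\beta}$. The finite list $\Gamma_{\alpha,\beta}$ controls which brackets survive.

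\textbf{Cases (1) and (2).} In case (1), $\alpha+\beta\notin\Delta\cup\{0\}$, so $[x_\beta,x_\alpha]=0$ and the two divided powers commute. In case (2), $z:=x_{\alpha+\beta}$ commutes with both $x_\alpha$ and $x_\beta$, because $2\alpha+\beta$ and $\alpha+2\beta$ are not roots. Hence $\mathrm{ad}\,x_\beta$ acts on $\mathbb{C}[x_\alpha,z]$ as the derivation $N_{\beta\alpha}z\,\partial/\partial x_\alpha$. This gives
\[
 (\mathrm{ad}\,x_\beta)^k(x_\alpha^t)=N_{\beta\alpha}^k\,\frac{t!}{(t-k)!}\,x_\alpha^{t-k}z^k .
\]
Substituting into the expansion and dividing by $r!\,t!$ yields \eqref{eq:comm-case2} with $i=k$. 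The factor $z^{(i)}$ can be moved to the right of $x_\beta^{(r-i)}$ because it commutes with $x_\beta$.

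\textbf{Cases (3) and (4).} In case (3), put $u=x_{\alpha+\beta}$ and $v=x_{\alpha+2\beta}$. The only nonzero brackets among $x_\alpha,x_\beta,u,v$ are
\[
 [x_\beta,x_\alpha]=N_{\beta\alpha}u,\qquad [x_\beta,u]=N_{\beta,\alpha+\beta}v .
\]
Moreover $v$ is central in the subalgebra generated by these four elements, and $u$ commutes with $x_\alpha$ and with $v$. So $\mathrm{ad}\,x_\beta$ is the derivation $N_{\beta\alpha}u\,\partial_{x_\alpha}+N_{\beta,\alpha+\beta}v\,\partial_u$ on the commutative algebra $\mathbb{C}[x_\alpha,u,v]$. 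Its exponential is therefore an algebra automorphism:
\[
 e^{s\,\mathrm{ad} x_\beta}(x_\alpha)=x_\alpha+sN_{\beta\alpha}u+\tfrac{s^2}{2}N_{\beta\alpha}N_{\beta,\alpha+\beta}v .
\]
Hence $e^{s\,\mathrm{ad}x_\beta}(x_\alpha^{(t)})$ is the $t$-th divided power of this trinomial. I expand it by the multinomial theorem for divided powers, $(a+b+c)^{(t)}=\sum a^{(t-i-j)}b^{(i)}c^{(j)}$. The coefficient of $s^{m}/m!$ with $m=i+2j$ gives $(\mathrm{ad}\,x_\beta)^{m}(x_\alpha^{(t)})/m!$, and putting this back into $x_\beta^{(r)}x_\alpha^{(t)}=\sum_m \frac{(\mathrm{ad}x_\beta)^m}{m!}(x_\alpha^{(t)})\,x_\beta^{(r-m)}$ yields
\[
 \sum 2^{-j}N_{\beta\alpha}^{i+j}N_{\beta,\alpha+\beta}^{j}\,x_\alpha^{(t-i-j)}u^{(i)}v^{(j)}x_\beta^{(r-i-2j)}.
\]
A sign reconciliation $N_{\beta,\alpha+\beta}=-N_{\alpha+\beta,\beta}$ is needed here. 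Finally I reorder into the stated order $x_\alpha x_\beta u v$ by commuting $u^{(i)}v^{(j)}$ past $x_\beta^{(r-i-2j)}$. Case (4) is analogous after exchanging the roles of $\alpha$ and $\beta$: apply $\mathrm{ad}\,x_\alpha$ to $x_\beta^{(r)}$ from the right, using $x_\beta^{(r)}x_\alpha^{(t)}=\sum_m x_\alpha^{(t-m)}\frac{(-\mathrm{ad}x_\alpha)^m}{m!}(x_\beta^{(r)})$.

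\textbf{Main obstacle.} The hard step is the final reordering in case (3). $u$ does not commute with $x_\beta$: $[x_\beta,u^{(i)}]=N_{\beta,\alpha+\beta}u^{(i-1)}v$. Moving $u^{(i)}v^{(j)}$ past $x_\beta^{(r-i-2j)}$ therefore produces extra terms. These have to be resummed, together with the sign conventions for the $N$'s, to recover exactly the coefficient $2^{-j}N_{\beta\alpha}^{i+j}N_{\alpha+\beta,\beta}^{j}$. I would handle this by instead expanding with $x_\beta^{(r)}$ placed to the left of $e^{s\,\mathrm{ad}}$, through $y\,x_\beta^{(r)}$-type identities, so that the $x_\beta$'s already sit in the middle. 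Alternatively, I would check the resummation by a generating-function computation using $e^{sx_\beta}x_\alpha e^{-sx_\beta}$.

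Integrality over $\ZZ[\frac12]$ is then visible from the formulas. This is consistent with the citation \cite[(9.1.2)]{CW15}, which one may simply invoke.
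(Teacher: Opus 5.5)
Your proof is correct in substance, and it takes a different route from the paper. The paper gives no argument of its own; it simply quotes \cite[(9.1.2)]{CW15}.

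You instead derive all four identities directly. You use the expansion $x_\beta^{(r)}y=\sum_m\frac{(\mathrm{ad}\,x_\beta)^m}{m!}(y)\,x_\beta^{(r-m)}$, or its mirror image with $-\mathrm{ad}\,x_\alpha$ acting from the right. You combine this with the fact that these derivations are locally nilpotent on a commutative subalgebra generated by three of the root vectors in $\Gamma_{\alpha,\beta}$, so their exponentials act by algebra automorphisms.

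This costs a page, but it buys a real check that the coefficients match the specific ordering $x_\alpha x_\beta x_{\alpha+\beta}x_{\alpha+2\beta}$ used in the statement. This includes the factor $2^{-j}$ and the choice of $N_{\alpha+\beta,\beta}$ rather than $N_{\beta,\alpha+\beta}$. That ordering is exactly what the paper relies on afterwards for $\ZZ[\frac12]$-integrality.

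Your case (4) is already complete. Since $x_\beta,x_{\alpha+\beta},x_{2\alpha+\beta}$ pairwise commute, you get $e^{-\tau\,\mathrm{ad}\,x_\alpha}(x_\beta^{(r)})=\bigl(x_\beta+\tau N_{\beta\alpha}x_{\alpha+\beta}+\tfrac{\tau^2}{2}N_{\beta\alpha}N_{\alpha+\beta,\alpha}x_{2\alpha+\beta}\bigr)^{(r)}$. The multinomial expansion of this lands directly in the stated order.

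The step you flag in case (3) does close. Be aware, though, that it is not a matter of sign convention. Your intermediate formula, with $u^{(i)}v^{(j)}$ to the left of $x_\beta$, is correct with $N_{\beta,\alpha+\beta}^j$. The stated formula differs from it by $(-1)^j$, and this flip is produced by the reordering terms.

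Here is how it closes. Since $v$ is central and $[u,x_\beta]=N_{\alpha+\beta,\beta}v$, your case (2) applied to the pair $(u,x_\beta)$ gives
\[
u^{(i)}x_\beta^{(m)}=\sum_k N_{\alpha+\beta,\beta}^{k}\,x_\beta^{(m-k)}u^{(i-k)}v^{(k)}.
\]
Substitute this, use $v^{(j)}v^{(k)}=\binom{j+k}{k}v^{(j+k)}$, and reindex $(i,j)\mapsto(i-k,j+k)$. The constraints $i+j\le t$ and $i+2j\le r$ are preserved, and $k$ runs over all of $0,\dots,j$. The coefficient of $x_\alpha^{(t-i-j)}x_\beta^{(r-i-2j)}u^{(i)}v^{(j)}$ becomes
\[
N_{\beta\alpha}^{i+j}N_{\alpha+\beta,\beta}^{j}\sum_{k=0}^{j}\binom{j}{k}\Bigl(-\tfrac12\Bigr)^{j-k}=2^{-j}N_{\beta\alpha}^{i+j}N_{\alpha+\beta,\beta}^{j},
\]
which is \eqref{eq:comm-case3}.

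Your generating-function alternative also works in one line. The intermediate formula says
\[
e^{sx_\beta}e^{\tau x_\alpha}=e^{\tau x_\alpha}e^{s\tau N_{\beta\alpha}u}e^{sx_\beta}e^{\frac{s^2\tau}{2}N_{\beta\alpha}N_{\beta,\alpha+\beta}v}.
\]
Apply $e^{cu}e^{sx_\beta}=e^{sx_\beta}e^{cu}e^{csN_{\alpha+\beta,\beta}v}$ with $c=s\tau N_{\beta\alpha}$. This turns the $v$-exponent into $\frac{s^2\tau}{2}N_{\beta\alpha}N_{\alpha+\beta,\beta}v$, and the coefficient of $s^r\tau^t$ is then exactly (3).

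One small slip: $\frac{(\mathrm{ad}\,x_\beta)^m}{m!}(x_\alpha^{(t)})$ is the coefficient of $s^m$ in $e^{s\,\mathrm{ad}\,x_\beta}(x_\alpha^{(t)})$, not the coefficient of $s^m/m!$.
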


We now substitute the explicit Chevalley basis elements from 
\eqref{eq:chevalleybasis-A} and \eqref{eq:chevalleybasis-BCD} into the general 
commutation relations of Lemma~\ref{lem:commutation-divided-powers}. 
The goal is to obtain commutation relations for the divided powers 
\(F_{ij}^{(t)} := F_{ij}^t / t!\) and to show that 
all structure constants lie in \(\mathbb{Z}[\frac12]\).

For types \(A_n, C_n, D_n\), every positive root vector \(x_\beta\) is either 
\(F_{ij}\), $F_{ij'}$,  or  \(\frac12 F_{ii'}\) (in type \(C_n\) for \(2\varepsilon_i\)). 
Thus the substitution yields directly relations with coefficients in 
\(\mathbb{Z}[\frac12]\). For type \(B_n\), 
\(x_{\varepsilon_i} = \sqrt{2} F_{i,n+1}\). 
We must check that in every occurrence of \(\varepsilon_i\) in the root set 
\(\Gamma_{\alpha,\beta}\), the \(\sqrt{2}\) factors cancel appropriately. 
Take, for example, the case \(\alpha = \varepsilon_i-\varepsilon_j\), \(\beta = \varepsilon_j\) 
(so \(\alpha,\beta\) are linearly independent). Then
\[
\Gamma_{\alpha,\beta} = \{\alpha,\beta,\alpha+\beta,\alpha+2\beta\}
= \{\varepsilon_i-\varepsilon_j,\; \varepsilon_j,\; \varepsilon_i,\; \varepsilon_i+\varepsilon_j\}.
\]
Substituting \(x_\beta = \sqrt{2}F_{j,n+1}\), \(x_\alpha = F_{ij}\), 
\(x_{\alpha+\beta} = \sqrt{2}F_{i,n+1}\), \(x_{\alpha+2\beta} = F_{i,j'}\) (with \(j' = N+1-j\)) into  \eqref{eq:comm-case3} gives
\[
F_{j,n+1}^{(r)} F_{ij}^{(t)} = \sum_{i+2j\le r}^{\; i+j\le t} 2^{-2j} N^{i+j}_{\beta\alpha} N^{j}_{\alpha+\beta,\beta} \,
F_{ij}^{(t-i-j)} F_{j,n+1}^{(r-i-2j)} F_{i,n+1}^{(i)} F_{i,j'}^{(j)}.
\] 
All other cases involving \(\varepsilon_i\)  are verified similarly. Therefore, for all classical Lie algebras, the subalgebra  $\U^{>}_{\ZZ[\frac{1}{2}]}(\mathfrak{g})$ of $U^{>}(\fg)$ generated by the 
divided powers of the \(F_{ij}\) generators over $\ZZ[\frac12]$  provides a \(\mathbb{Z}[\frac12]\)-form of the universal enveloping algebra. Moreover, these commutation relations can be inverted, so that we can express every divided power of positive root vector as a $\ZZ[\tfrac{1}{2}]$-linear combination of
products of divided powers of simple root vectors. Explicitly, define for each positive root $\beta\in\Delta^+$ the element
\begin{equation}\label{eq:tildexbeta}
  \tilde{x}_\beta := 
\begin{cases}
F_{ij}, & \text{if } \beta = \varepsilon_i-\varepsilon_j\;(i< j),\\[2pt]
F_{ij'}, & \text{if } \beta = \varepsilon_i+\varepsilon_j\;(i<j),\\[2pt]
F_{i,n+1}, & \text{if } \beta = \varepsilon_i \text{ in type } B_n,\\[2pt]
 F_{ii'}, & \text{if } \beta = 2\varepsilon_i \text{ in type } C_n,
\end{cases}  
\end{equation}
and set $\tilde{x}_\beta^{(t)} := \tilde{x}_\beta^{\,t}/t!$ for $t\in\NN$. Then:

\begin{Proposition}\label{prop:kostant-form-simple-roots}
  Let $\mathfrak{g}$ be a  simple Lie algebra of classical type, with simple roots $\{\alpha_1,\dots,\alpha_n\}$.
  Denote by $\U^{>}_{\ZZ[\frac12]}(\fg)$ the $\ZZ[\frac12]$-subalgebra of
$U^{>}(\fg)$ generated by the divided powers $\tilde{x}_\beta^{(t)}\ (\beta\in\Delta^{+}, t\in\NN)$. Then $\U^{>}_{\ZZ[\frac12]}(\fg)$ is generated by the divided
  powers $\{\tilde{x}_{\alpha_i}^{(t)}\mid 1\leq i\leq n,\; t\in\NN\}$ over $\ZZ[\frac{1}{2}]$.
\end{Proposition}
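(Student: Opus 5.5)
We argue by induction on the height $|\beta|$, showing that every $\tilde{x}_\beta^{(t)}$ lies in the $\ZZ[\frac12]$-subalgebra $\mathcal{A}$ generated by the $\tilde{x}_{\alpha_i}^{(t)}$. The height-one case is trivial. For $|\beta|\geq 2$, we use the Lyndon factorization from \eqref{eq:lyndon-notation} (equivalently, any decomposition $\beta=\alpha+\gamma$ with $\gamma$ simple and $\alpha$ positive) to write $\beta$ as the \emph{largest} root in a rank-two root string $\Gamma_{\alpha,\gamma}$ of $\alpha$ and $\gamma$. Both $\alpha$ and $\gamma$ have smaller height, so by induction all their divided powers lie in $\mathcal{A}$.

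The commutation formulas of Lemma~\ref{lem:commutation-divided-powers}, rewritten for the $\tilde{x}$'s with coefficients in $\ZZ[\frac12]$ as described just before the statement, then express $\tilde{x}_\gamma^{(t)}\tilde{x}_\alpha^{(t)}$ or $\tilde{x}_\alpha^{(t)}\tilde{x}_\gamma^{(t)}$ (ordered appropriately) as a sum. The extreme term in this sum is $c\,\tilde{x}_\beta^{(t)}$, where $c$ is a power of $2$ times a unit. All other terms are products of divided powers of roots in $\Gamma_{\alpha,\gamma}$, in which $\tilde{x}_\beta$ appears to a strictly smaller exponent.

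In case \eqref{eq:comm-case2}, taking $r=t$, the $i=t$ term is $N^t\,\tilde{x}_{\alpha+\gamma}^{(t)}$ (up to $2$-power normalization). Here $N=\pm1$ or $\pm2$, and one must check that in the $\tilde{x}$-normalization the structure constant $N$ is a unit of $\ZZ[\frac12]$. That check is immediate from \eqref{eq:commuformulaFij}, which gives $[F_{ij},F_{k\ell}]$ with coefficients $0,\pm1,\pm2$.

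In cases \eqref{eq:comm-case3} and \eqref{eq:comm-case4}, where $\beta$ is $\alpha+2\gamma$ or $2\alpha+\gamma$, take $r=2t$ and $t$ (respectively $t$ and $2t$). The extreme term $j=t$, $i=0$ is then $2^{-t}N^{t}N'^{t}\tilde{x}_\beta^{(t)}$, again a unit multiple. The intermediate root $\alpha+\gamma$ also has height less than $|\beta|$, hence lies in $\mathcal{A}$ by induction.

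We finish with an inner induction on $t$. Each of the remaining terms contains $\tilde{x}_\beta^{(i)}$ only with $i<t$, multiplied by divided powers of lower-height roots. All of these lie in $\mathcal{A}$ by the inner and outer inductions, so solving for $\tilde{x}_\beta^{(t)}$ places it in $\mathcal{A}$.

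The main obstacle is the bookkeeping of units. One has to verify, type by type and especially in type $B_n$ with the $\sqrt2$ rescaling and in type $C_n$ with $\frac12F_{ii'}$, that the leading coefficient is $\pm2^{s}$ and never involves an odd prime. One must also check that every positive root is reached in a configuration where $\beta$ is the top of the string: for example, in type $C_n$ the roots $2\varepsilon_i=(\varepsilon_i-\varepsilon_{i+1})\cdot 2+2\varepsilon_{i+1}$ come from case~(4), and in type $B_n$ the roots $\varepsilon_i+\varepsilon_j$ come from case~(3) as in the example computed before the statement. I would carry out this check by listing, for each type, the decomposition dictated by the standard Lyndon words, and reading off $N_{\gamma\alpha}$ from \eqref{eq:commuformulaFij}.
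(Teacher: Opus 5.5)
Your scheme follows the paper's idea of inverting the commutation formulas, but it breaks down in types $B_n$ and $C_n$. The gap is the claim that every non-simple $\beta\in\Delta^+$ is the \emph{top} of some string $\Gamma_{\alpha,\gamma}$ with $\alpha,\gamma\in\Delta^+$ of smaller height. That claim is false.

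The height-two root $\alpha_{n-1}+\alpha_n$ has only the decomposition $\alpha_{n-1}+\alpha_n$, and it sits in the \emph{middle} of $\Gamma_{\alpha_{n-1},\alpha_n}$:
in $B_n$ this string is $\{\alpha_{n-1},\alpha_n,\varepsilon_{n-1},\varepsilon_{n-1}+\varepsilon_n\}$ (case \eqref{eq:comm-case3}), and in $C_n$ it is $\{\alpha_{n-1},\alpha_n,\varepsilon_{n-1}+\varepsilon_n,2\varepsilon_{n-1}\}$ (case \eqref{eq:comm-case4}).

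More generally, in $B_n$ no short root $\varepsilon_i$ ($i<n$) is ever a top. It is not of the form $\alpha+2\gamma$ or $2\alpha+\gamma$. Every decomposition of it into two positive roots is $(\varepsilon_i-\varepsilon_k)+\varepsilon_k$ with $k>i$, and then $\varepsilon_i+\varepsilon_k$ lies above it in the string.

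For such a $\beta$, the relation you would invert (e.g.\ \eqref{eq:comm-case3} for $\tilde x_\gamma^{(t)}\tilde x_\alpha^{(t)}$) gives a unit times $\tilde x_\beta^{(t)}$. It also produces terms containing divided powers of the top root, which has \emph{larger} height, so your outer height induction cannot supply them.

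Your treatment of $\varepsilon_i+\varepsilon_j$ in $B_n$ and of $2\varepsilon_i$ in $C_n$ relies on exactly these middle roots as ``intermediate roots already in $\mathcal{A}$''. In $B_n$ this is $\varepsilon_i$; in $C_n$ it is $\varepsilon_i+\varepsilon_{i+1}$, which ultimately rests on $\varepsilon_{n-1}+\varepsilon_n$. So none of the non-type-$A$ roots of $B_n$ or $C_n$ is actually reached by your argument.

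The missing step is to treat a middle root together with the top of its string, by a joint induction on $t$. Suppose $\Gamma_{\alpha,\gamma}=\{\alpha,\gamma,\alpha+\gamma,\alpha+2\gamma\}$ and all divided powers of $\tilde x_\alpha,\tilde x_\gamma$ lie in $\mathcal{A}$.

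First, in \eqref{eq:comm-case3} for $\tilde x_\gamma^{(t)}\tilde x_\alpha^{(t)}$:
the term $i=t$ is a unit times $\tilde x_{\alpha+\gamma}^{(t)}$.
All other terms involve only $\tilde x_{\alpha+\gamma}^{(i)}$ with $i<t$ and $\tilde x_{\alpha+2\gamma}^{(j)}$ with $2j\le t$, hence $j<t$.

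Second, in \eqref{eq:comm-case3} for $\tilde x_\gamma^{(2t)}\tilde x_\alpha^{(t)}$:
the term $j=t$ is a unit times $\tilde x_{\alpha+2\gamma}^{(t)}$.
All other terms involve only $\tilde x_{\alpha+\gamma}^{(i)}$ with $i\le t$ (obtained in the first step) and $\tilde x_{\alpha+2\gamma}^{(j)}$ with $j<t$.

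Case \eqref{eq:comm-case4} works the same way with the roles of the two roots interchanged. Apply this to the pair $(\varepsilon_i-\varepsilon_n,\alpha_n)$:
in $B_n$ it yields $\varepsilon_i$ and $\varepsilon_i+\varepsilon_n$;
in $C_n$ it yields $\varepsilon_i+\varepsilon_n$ and $2\varepsilon_i$.
After that, every remaining root is the top of a case-\eqref{eq:comm-case2} string, for instance $(\varepsilon_i+\varepsilon_{j+1})+\alpha_j$ in $B_n$ and $(\varepsilon_j-\varepsilon_n)+(\varepsilon_i+\varepsilon_n)$ in $C_n$, and your argument goes through.
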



\subsubsection{Integral form  $\Y^>_n$}\label{subsec:shuffleintegralform}

We shall now introduce  a $\ZZ[\tfrac{1}{2}]$-integral form $\Y_n^>$ of the Yangian $Y^>_n$. For $1\leq i\leq n$ and $r, t\in\NN$, define the {divided powers}
\begin{align}\label{def:divided power:simple root}
\e_{i,r}^{(t)}\coloneqq \frac{e_{i,r}^t}{t!},
\end{align}
and let $\Y_n^>$ be the $\ZZ[\frac{1}{2}]$-subalgebra of $Y_n^>$ generated by $\{\e_{i,r}^{(t)};~1\leq i\leq n, r,t\in\NN\}$.
Recalling the root vectors $\{e_\beta(r);~\beta\in\Delta^+,r\in\NN\}$ of (\ref{eq:B-root-vector-special}--\ref{eq:root-vector-standard}),  we also define their divided powers
\begin{align}\label{def:divided power:positive root}
\e_\beta(r)^{(t)}:=\frac{e_\beta(r)^t}{t!}\quad (t\in\NN).  
\end{align}
The following result is adapted from \cite[Proposition 1.2]{Tsy23}:
\begin{Proposition}\label{prop:dividedpoweresebetar}
For any $\beta\in\Delta^+$, $r,t\in\NN$, we have
   \[
  \e_\beta(r)^{(t)}\in\Y_n^>.    
   \]
\end{Proposition}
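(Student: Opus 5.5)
We prove that $\e_\beta(r)^{(t)}\in\Y_n^>$ by induction on the height $|\beta|$. We mimic the classical argument that divided powers of non-simple root vectors lie in the Kostant form. The key tool is Lemma~\ref{lem:commutation-divided-powers}, whose Yangian analogue we derive from the shuffle realization (Theorem~\ref{thm:shufflerealizationyangian}). For $|\beta|=1$ the statement is the definition of $\Y_n^>$.

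For $|\beta|>1$, write the root vector as an iterated commutator. By \eqref{eq:B-root-vector-special}--\eqref{eq:root-vector-standard}, in the standard case $e_\beta(r)=[e_{\gamma}(r),e_{i_\ell,0}]$ with $\gamma=[i_1,\dots,i_{\ell-1}]$ of smaller height. In the special cases (type $B_n$ short roots and type $C_n$ roots $[i,n,i]$) it is a commutator of two root vectors of smaller height, one of which carries the spectral parameter $r$.

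We then imitate the rank-two computation behind \eqref{eq:comm-case2}. Consider the products $\e_{\gamma}(r)^{(a)}\e_{j,0}^{(b)}$, which lie in $\Y_n^>$ by induction. We expand $\e_{j,0}^{(t)}\e_{\gamma}(r)^{(t)}$ by moving divided powers past each other in the ordered PBW basis of Theorem~\ref{thm:yangian-basis}. The leading PBW term should be $\pm\e_\beta(r)^{(t)}$, where the sign or power of $2$ comes from $N_{\beta\alpha}$ in our normalization. The remaining terms are ordered products of divided powers of root vectors of strictly smaller height, or of $\e_\beta$ with strictly smaller exponent. Those lie in $\Y_n^>$ by a double induction, on height and on $t$. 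Solving for $\e_\beta(r)^{(t)}$ then gives the claim.

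The main obstacle is controlling the correction terms. In the Yangian, $[e_\gamma(r),e_\beta(s)]$ is not zero for commuting pairs of roots. It has lower-order corrections coming from \eqref{eq:Drinfeld-rel1}, and these involve several spectral parameters. We would handle this in the shuffle algebra. By Lemma~\ref{k times of r-powerof x1}, $\Psi(e_\beta(r)^t)$ is $t!$ times an explicit function: the rank-one computation transported via the root vector formulas \eqref{eq:psirv-1}--\eqref{eq:psirv-2}, together with the specialization maps $\phi_{\unl d}$. Concretely, one shows that $\Psi(\e_\beta(r)^{(t)})$ is a $\ZZ[\frac12]$-combination of shuffle products of the elements $\Psi(\e_{i,s}^{(u)})$ and of lower divided powers. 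To do this, we filter by the Kostant partition order \eqref{eq:KP-order} and use Lemmas~\ref{lem:spepsipbwbasis}--\ref{prop:mainpropertyspe}. The triangularity of Hall--Littlewood polynomials over $\ZZ[\frac12]$ in \eqref{eq:plambdamlambda} ensures that no denominators other than powers of $2$ appear.

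Alternatively, following \cite[Proposition 1.2]{Tsy23} directly: first prove the case $\beta$ simple with shifted parameters, then use the adjoint action of $\e_{i,0}^{(u)}$, which acts by $\ZZ$-integral operators $\frac{(\mathrm{ad}\,e_{i,0})^u}{u!}$ on $\Y_n^>$. Each root vector is $\frac{(\mathrm{ad})^{u}}{u!}$-type iterate applied to a simpler one, with $u\le 2$ in the non-simply-laced cases, which accounts for the factor $\frac12$.
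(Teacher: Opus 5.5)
\textbf{There is a genuine gap.} Neither of your routes controls the $t$-th divided power.

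\textbf{First route.} Solving for $\e_\beta(r)^{(t)}$ from the PBW expansion of $\e_{j,0}^{(t)}\e_\gamma(r)^{(t)}$ requires every other PBW coefficient to lie in $\ZZ[\frac12]$. That is the entire content of the statement, and you concede that the corrections coming from \eqref{eq:Drinfeld-rel1} are not under control.

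\textbf{Shuffle fallback.} This is circular. The $\ZZ[\frac12]$-version of the Kostant-partition filtration argument is Lemma~\ref{lem:span}. It supplies the leading terms at level $\unl{d}$ from the elements $\Psi(\E_h)$ with $\deg h=\unl{d}$. At the level $\unl{d}=t\beta$, which carries the leading specialization of $\Psi(\e_\beta(r)^{(t)})$, these elements are products of divided powers of the $e_\beta(s)$, and they include $\e_\beta(r)^{(t)}$ itself. You would also need $\Psi(\e_\beta(r)^{(t)})\in\W^{(n)}$ from the outset, i.e.\ that the numerator of $\Psi(e_\beta(r))^{\star t}$ is divisible by $t!$ over $\ZZ[\frac12]$. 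Lemma~\ref{k times of r-powerof x1} gives this only for simple roots.

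\textbf{Second route.} This shows only that $e_\beta(r)\in\Y_n^>$, which is clear anyway. Integrality of the operators $\frac{(\mathrm{ad}\,e_{i,0})^u}{u!}$ says nothing about $\bigl(\frac{(\mathrm{ad}\,e_{i,0})^u}{u!}x\bigr)^t/t!$. One could try conjugating $\e_\gamma(r)^{(t)}$ by $\exp(\lambda e_{j,0})=\sum_k\lambda^k\e_{j,0}^{(k)}$ and taking the top $\lambda$-coefficient. That needs the exact nilpotency order of $\mathrm{ad}\,e_{j,0}$ on $e_\gamma(r)$ inside $Y_n^>$, which you do not address.

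\textbf{The missing idea.} Do not imitate the rank-two computation inside $Y_n^>$; push it forward from $U^>(\fg)$. In the definition of $e_\beta(r)$, the parameter $r$ sits on a letter $i_0$ that occurs exactly once in the Lyndon word of $\beta$:
\begin{itemize}
\item $i_0=i$ for $[i,j]$ and $[i,n,j]$;
\item $i_0=n$ for $[i,n]$ in type $B_n$ and for $[i,n,i]$ in type $C_n$.
\end{itemize}
So the paper uses the algebra homomorphism $\sigma_{\beta,r}\colon U^>(\fg)\to Y_n^>$ given by $\tilde{x}_{\alpha_{i_0}}\mapsto e_{i_0,r}$ and $\tilde{x}_{\alpha_\ell}\mapsto e_{\ell,0}$ for $\ell\neq i_0$. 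It is well defined because \eqref{eq:Drinfeld-rel2}, taken with all $s_k$ equal, yields the Serre relations.

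This map sends $\tilde{x}_{\alpha_{i_0}}^{(u)}\mapsto\e_{i_0,r}^{(u)}$ and $\tilde{x}_{\alpha_\ell}^{(u)}\mapsto\e_{\ell,0}^{(u)}$. Hence, by Proposition~\ref{prop:kostant-form-simple-roots}, it maps $\U^{>}_{\ZZ[\frac12]}(\fg)$ into $\Y_n^>$. Moreover $\sigma_{\beta,r}(\tilde{x}_\beta)=c\,e_\beta(r)$ with $c=\pm2^s$. Therefore $\e_\beta(r)^{(t)}=c^{-t}\sigma_{\beta,r}(\tilde{x}_\beta^{(t)})\in\Y_n^>$.

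All uses of Lemma~\ref{lem:commutation-divided-powers} then take place in $U(\fn^+)$, where they hold exactly. The Yangian correction terms you identified as the main obstacle never appear.
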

\begin{proof}
For any $\beta\in\Delta^{+}$, $r\in \NN$, consider the following map $\sigma_{\beta,r}\colon\ U^{>}(\fg)\longrightarrow Y^{>}_n$:
\begin{itemize}[leftmargin=0.7cm]
    \item For $\beta=[i,j]\ \text{or}\ [i,n,j]$, except for $\beta=[i,n]$ in type $B_n$, $\sigma_{\beta,r}$ is defined by sending (cf. \eqref{eq:tildexbeta})
    \[\tilde{x}_{\alpha_i}\mapsto e_{i,r},\qquad \tilde{x}_{\alpha_{\ell\neq i}}\mapsto e_{\ell,0}.\]
    \item For $\beta=[i,n]$ in type $B_n$, $\sigma_{\beta,r}$ is defined by sending
    \[\tilde{x}_{\alpha_n}\mapsto e_{n,r},\qquad \tilde{x}_{\alpha_{\ell\neq n}}\mapsto e_{\ell,0}.\]
    \item For $\beta=[i,n,i]$ in type $C_n$, $\sigma_{\beta,r}$ is defined by sending
    \[\tilde{x}_{\alpha_n}\mapsto e_{n,r},\qquad \tilde{x}_{\alpha_{\ell\neq n}}\mapsto e_{\ell,0}.\]
\end{itemize}
Then $\sigma_{\beta,r}$ is a $\CC$-algebra homomorphism, cf. (\ref{eq:Serre-relations}, \ref{eq:Drinfeld-rel2}), satisfying \[\sigma_{\beta,r}\left(\U^{>}_{\ZZ[\frac12]}(\fg))\right)\subset \Y^>_n,\]
 and $\sigma_{\beta,r}(\tilde{x}_{\beta})\doteq e_{\beta}(r)$\footnote{This follows from direct computations using \eqref{eq:commuformulaFij}}. Then it follows from $\tilde{x}_\beta^{(t)}\in \U^{>}_{\ZZ[\frac12]}(\fg)$ that $\e_\beta(r)^{(t)}\in\Y_n^>$.
\end{proof}

Similar to \eqref{eq:pbwd-yangian}, for any function $h\in\mathcal{H}$, consider the ordered monomials

\begin{equation}\label{eq:pbwd-integralyangian}
\E_{h}\, =\prod_{(\beta,r)\in\Delta^{+}\times\mathbb{N}}\limits^{\rightarrow} \e_{\beta}(r)^{(h(\beta,r))}.
\end{equation}
Then we have the   PBW theorem for the integral form $\Y^>_n$:
 
\begin{Theorem}\label{thm:pbwd-integralyangian}
The elements $\{\E_{h}\}_{h\in \mathcal{H}}$ of \eqref{eq:pbwd-integralyangian} form a basis of the $\ZZ[\frac{1}{2}]$-module  $\Y^>_n$.
\end{Theorem}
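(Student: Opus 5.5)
Linear independence of $\{\E_h\}_{h\in\mathcal H}$ over $\ZZ[\frac12]$ is immediate: each $\E_h$ is a nonzero rational multiple of $E_h$, and by Theorem~\ref{thm:yangian-basis} the $E_h$ are $\CC$-linearly independent. Also $\E_h\in\Y^>_n$ by Proposition~\ref{prop:dividedpoweresebetar}. The content is therefore the spanning statement: every element of $\Y^>_n$ is a $\ZZ[\frac12]$-combination of the $\E_h$. Since $\Y^>_n$ is generated by the $\e_{i,r}^{(t)}$, it suffices to show that the $\ZZ[\frac12]$-span $\mathbf{Y}'$ of $\{\E_h\}$ is closed under multiplication. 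Rather than using commutation relations in $Y^>_n$ directly, I will characterize $\mathbf{Y}'$ intrinsically through the shuffle realization $\Psi$ and the specialization maps.

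\textbf{Step 1 (integral shuffle subspace).} Set
\[
\W^{(n)}_{\unl k}:=\{F\in W^{(n)}_{\unl k}\mid \phi_{\unl d}(F)\in \ZZ[\tfrac12][\{w_{\beta,s}\}]^{\Sigma_{\unl d}}\ \text{for all}\ \unl d\in\mathrm{KP}(\unl k)\}.
\]
I will show:
\begin{itemize}[leftmargin=0.7cm]
\item[(a)] $\Psi(\Y^>_n)\subset\W^{(n)}$;
\item[(b)] every $F\in\W^{(n)}_{\unl k}$ is a $\ZZ[\frac12]$-combination of the $\Psi(\E_h)$.
\end{itemize}
Together these give $\Y^>_n=\mathbf{Y}'$, and injectivity of $\Psi$ (Theorem~\ref{thm:shufflerealizationyangian}) finishes the proof.

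\textbf{Step 2: proof of (a).} By Lemma~\ref{k times of r-powerof x1} and the shuffle product formula, $\Psi(\e_{i,r}^{(t)})=(x_{i,1}\cdots x_{i,t})^r$ is integral. A product of generators is then a $\Sym_{\mathrm{sh}}$ of products of integral pieces times $\zeta$-factors, whose only coefficients are $d_ic_{ij}/2\in\ZZ[\frac12]$. The specializations use only shifts in $\frac12\ZZ$ and divide by the monic-in-$w$ factors $B_\beta$. Hence the specialized numerator of any product of the generators has $\ZZ[\frac12]$ coefficients.

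The delicate point is that the shuffle denominators $x_{i,r}-x_{j,s}$ become constants after specialization. This is exactly the issue the modified maps are designed to handle, and I expect this verification to be routine. If it fails, I will fall back on the alternative sketched in Step 4.

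\textbf{Step 3: proof of (b), the main obstacle.} I will run the filtration argument of Lemmas~\ref{lem:spepsipbwbasis}--\ref{prop:mainpropertyspe} over $\ZZ[\frac12]$. Take $F\in\W^{(n)}_{\unl k}$ and let $\unl d$ be the smallest Kostant partition with $\phi_{\unl d}(F)\neq0$.

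By Lemma~\ref{lem:spepsipbwbasis}, after dividing by $\doteq$ constants, $\phi_{\unl d}(\Psi(\E_h))\doteq G\cdot\prod_\beta \hat P_{\lambda_{h,\beta}}/\mathrm{mul}(\lambda_{h,\beta})$ for all $h\in\mathcal H_{\unl k,\unl d}$, with the same $G$ for every such $h$. By \eqref{eq:plambdamlambda}, applied after the shift $w\mapsto w-\kappa_\beta$ and with $\hbar=(\beta,\beta)/2$, the normalized terms $\hat P_\lambda/\mathrm{mul}(\lambda)$ form a $\ZZ[\frac12]$-basis of symmetric polynomials.

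The key claim is that $\phi_{\unl d}(F)$ is divisible by $G$ in $\ZZ[\frac12][w]$ with integral quotient. Divisibility over $\CC$ comes from the proof of Lemma~\ref{prop:mainpropertyspe}, via vanishing of the lower specializations and the wheel conditions. Integrality of the quotient follows because the linear factors of $G$ have leading coefficients in $\ZZ[\frac12]^\times$ (these are the forms $w-cw'$ with $c=\pm1$, to be checked from the explicit $G$ in \cite{HT24,HT26}), and Gauss' lemma then applies.

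We can therefore subtract an integral combination of the $\Psi(\E_h)$, $h\in\mathcal H_{\unl k,\unl d}$, to kill $\phi_{\unl d}(F)$. Lemma~\ref{lem:spefiltration} ensures this subtraction preserves the vanishing of all smaller specializations. Induction on the finite totally ordered set $\mathrm{KP}(\unl k)$ ends at $F=0$, because the collection $\{\phi_{\unl d}\}$ is jointly injective on $W^{(n)}_{\unl k}$ by Lemma~\ref{prop:mainpropertyspe} and Theorem~\ref{thm:yangian-basis}.

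\textbf{Step 4 (fallback and where the difficulty lies).} The hardest part is establishing that the constants hidden in $\doteq$ and in $G$ are units of $\ZZ[\frac12]$, uniformly in $\unl d$. If the divisibility route is problematic, I would instead prove closure of $\mathbf{Y}'$ directly. Using the embeddings $\sigma_{\beta,r}$ and the divided-power commutation relations of Lemma~\ref{lem:commutation-divided-powers}, I would show that $\e_{i,r}^{(t)}\E_h\in\mathbf{Y}'$, by an induction on $\unl d$ controlled by the specialization maps.
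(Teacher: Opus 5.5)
Your proposal is correct and is essentially the paper's argument. The paper likewise proves $\Psi(\Y^>_n)\subset\W^{(n)}$ (Lemma~\ref{lemma:inclusion}) and then runs the Kostant-partition filtration, using divisibility by $\prod G_{\beta,\beta'}\prod G_\beta$ and the $\ZZ[\frac12]$-basis $\prod_\beta P_{\lambda_{h,\beta}}$ (Lemma~\ref{lem:span}); your only differences are defining $\W^{(n)}$ by integrality of all specializations rather than of the numerator $f$ (the two agree a posteriori) and spelling out the Gauss-lemma step for the integral quotient, which the paper leaves implicit. The Step 2 worry about shuffle denominators is moot because $\phi_{\unl d}$ is applied to the numerator $f$, and the unit constants you flag in Step 4 are supplied by Lemma~\ref{lem:spepsipbwbasis} through the convention \eqref{eq:doteqmean}, so no fallback is needed.
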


The proof of Theorem \ref{thm:pbwd-integralyangian} relies on the  shuffle algebra realization of $\Y^>_n$. Let us now introduce an integral form $\W^{(n)}$ for the rational shuffle algebra $W^{(n)}$. Consider the $\ZZ[\frac{1}{2}]$-submodule $\W^{(n)}_{\unl{k}}$ of $W^{(n)}_{\unl{k}}$ consisting of rational functions $F$ satisfying the following  condition: If $f$ denotes the numerator of $F$ from \eqref{polecondition}, then
\begin{equation}
\label{integralformY-1}
  f\in \ZZ[\tfrac{1}{2}][\{x_{i,r}\}_{1\leq i\leq  n}^{1\leq r\leq k_{i}}]^{\Sigma_{\underline{k}}}.
\end{equation}
Let $\W^{(n)}\coloneqq \bigoplus_{\unl{k}\in\NN^{n}}\W^{(n)}_{\unl{k}}$. Then, we have:
\begin{Lemma}\label{lemma:inclusion}
 $\Psi(\Y^>_n)\subset \W^{(n)}$.
\end{Lemma}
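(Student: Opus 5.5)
Since $\Psi$ is an algebra homomorphism and $\Y^>_n$ is generated over $\ZZ[\frac12]$ by the divided powers $\e_{i,r}^{(t)}$, it suffices to prove two things:
\begin{enumerate}
\item[(a)] $\W^{(n)}$ is a $\ZZ[\frac12]$-subalgebra of $W^{(n)}$ under $\star$;
\item[(b)] $\Psi(\e_{i,r}^{(t)})\in\W^{(n)}$ for all $i,r,t$.
\end{enumerate}
Indeed, $\Psi(\Y^>_n)$ is then the $\ZZ[\frac12]$-span of $\star$-products of elements lying in the subalgebra $\W^{(n)}$.

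For (b) I would use the fact that $\Psi(e_{i,r})=x_{i,1}^r$. Then $\Psi(e_{i,r}^t)$ is the $t$-fold $\star$-power of $x_{i,1}^r$, computed inside the single color $i$. The variables $x_{i,*}$ are the only ones present, so the $\zeta$-factors that occur are $\zeta_{ii}(x_{i,a}-x_{i,b})=\frac{x_{i,a}-x_{i,b}+d_i}{x_{i,a}-x_{i,b}}$. This is exactly the situation of $W^{(1)}_\hbar$ with $\hbar=d_i\in\{1,\frac12,2\}\subset\ZZ[\frac12]$, and $W^{(n)}_{t\mathbf 1_i}$ has trivial denominator \eqref{polecondition}. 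Lemma~\ref{k times of r-powerof x1} then gives $\Psi(e_{i,r}^t)=t!\,(x_{i,1}\cdots x_{i,t})^r$, so $\Psi(\e_{i,r}^{(t)})=(x_{i,1}\cdots x_{i,t})^r$. This has integral numerator, so it lies in $\W^{(n)}$.

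For (a), take $F\in\W^{(n)}_{\unl k}$ and $G\in\W^{(n)}_{\unl\ell}$ with numerators $f,g$. I would use the shuffle-sum form \eqref{shuffle product-1}, which carries no factorial normalization; this is exactly the reason it was chosen. Multiply $F\star G$ by the full denominator $D_{\unl k+\unl\ell}=\prod_{i<j,\,c_{ij}\neq0}\prod(x_{i,r}-x_{j,s})$. Each summand becomes $f\cdot g$ (with variables permuted) times the cross denominators $x_{i,r}-x_{j,s}$ not already present in $D_{\unl k}D_{\unl\ell}$. The $\zeta_{ij}$ with $i\neq j$ have denominators that cancel against these cross factors, up to sign. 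What remains is a ratio whose only denominators are same-color differences $x_{i,r}-x_{i,r'}$ coming from $\zeta_{ii}$, and all numerators have coefficients in $\ZZ[\frac12]$ because $\frac{d_ic_{ij}}{2}\in\ZZ[\frac12]$. Hence $\widetilde f:=D_{\unl k+\unl\ell}\cdot(F\star G)$ has the form $A/\prod_{i}\prod_{r<r'}(x_{i,r}-x_{i,r'})$ with $A$ an antisymmetric polynomial over $\ZZ[\frac12]$. We already know $\widetilde f$ is a polynomial, since $W^{(n)}$ is closed under $\star$.

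The main obstacle is to conclude that this polynomial has coefficients in $\ZZ[\frac12]$, not merely in $\mathbb Q$. For this I would use the standard fact that for a $\ZZ$-algebra $R$, an antisymmetric polynomial over $R$ is divisible in $R[x]$ by the Vandermonde. The argument is that $x_a-x_b$ is monic in $x_a$, so division by it stays over $R$; one proceeds inductively, using that $R[x]/(x_a-x_b)$ is torsion-free enough after inverting $2$. Alternatively, since the quotient is already known to be a polynomial, Gauss's lemma applies: the Vandermonde is primitive over $\ZZ$, and the content of $A$ lies in $\ZZ[\frac12]$, so the quotient's coefficients lie in $\ZZ[\frac12]$. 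The same works prime by prime, since $\ZZ[\frac12]$ is a UFD. Thus $\widetilde f\in\ZZ[\frac12][x]^{\Sigma_{\unl k+\unl\ell}}$, so $F\star G\in\W^{(n)}$. Combining (a) and (b) gives the lemma.
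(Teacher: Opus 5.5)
Your proof is correct and follows the same route as the paper: Lemma~\ref{k times of r-powerof x1} with $\hbar=d_i$ gives $\Psi(\e_{i,r}^{(t)})=(x_{i,1}\cdots x_{i,t})^r$, and the paper then tacitly uses that $\star$ preserves $\ZZ[\frac12]$-integral numerators. Your step (a) correctly supplies that closure argument, which the paper leaves implicit: you cancel the cross $\zeta_{ij}$-denominators against $D_{\unl k+\unl\ell}$, then divide the resulting antisymmetric $\ZZ[\frac12]$-polynomial by the same-color Vandermonde using Gauss's lemma.
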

\begin{proof}
For any $m\in\NN$, $ i_{1},\dots,i_{m}\in I$, $r_{1},\dots,r_{m}\in\NN$, and $t_{1},\dots,t_{m}\in\NN$, let
\[
  F\coloneqq \Psi\big(\e^{(t_{1})}_{i_{1},r_{1}}\cdots \e^{(t_{m})}_{i_{m},r_{m}}\big),
\]
and $f$ be the numerator of $F$, then the validity of \eqref{integralformY-1} for $f$ follows from Lemmas \ref{k times of r-powerof x1},  so   $\Psi(\Y^>_n)\subset \W^{(n)}$. 
\end{proof}


\begin{Lemma}\label{lem:span}
For any $F\in \W^{(n)}_{\unl{k}}$ and $\unl{d}\in \text{\rm KP}(\unl{k})$, if $\phi_{\unl{d}'}(F)=0$ for all
$\unl{d}'\in \text{\rm KP}(\unl{k})$ such that $\unl{d}'<\unl{d}$, then there exists $F_{\unl{d}}\in \Psi(\Y^>_n)$
such that $\phi_{\unl{d}}(F)=\phi_{\unl{d}}(F_{\unl{d}})$ and $\phi_{\unl{d}'}(F_{\unl{d}})=0$ for all $\unl{d}'<\unl{d}$.
\end{Lemma}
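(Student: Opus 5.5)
This is the integral analogue of Lemma~\ref{prop:mainpropertyspe}, and I would prove it by the same mechanism. The new ingredients are that $\phi_{\unl{d}}(F)$ lands in polynomials with $\ZZ[\frac12]$-coefficients, and that the divided powers $\E_h$ produce exactly the $\ZZ[\frac12]$-basis $\{P_\lambda\}$ of \S\ref{subs:zbasis}.

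\textbf{Step 1: describe $\phi_{\unl{d}}(F)$ integrally.} Let $F\in\W^{(n)}_{\unl{k}}$ satisfy $\phi_{\unl{d}'}(F)=0$ for all $\unl{d}'<\unl{d}$. The specializations send variables to $w_{\beta,s}-c$ with $c\in\ZZ[\frac12]$, and the divisors $B_\beta$ are monic in $w_{\beta,s}$ with $\ZZ[\frac12]$ constant terms. So $\phi_{\unl{d}}(F)$ is a $\Sigma_{\unl{d}}$-symmetric polynomial over $\ZZ[\frac12]$.

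Next I would rerun the vanishing argument of \cite{Tsy21,HT24,HT26} underlying Lemma~\ref{prop:mainpropertyspe}. The hypothesis $\phi_{\unl{d}'}(F)=0$ for smaller $\unl{d}'$ forces $\phi_{\unl{d}}(F)$ to vanish on the same hyperplanes as the factors $G_{\beta,\beta'}$ and $G_\beta$ of Lemma~\ref{lem:spepsipbwbasis}, and to be divisible by the Vandermonde-type denominators hidden in $\hat P_\lambda$. This gives a factorization
\[
\phi_{\unl{d}}(F)=\prod_{\beta<\beta'}G_{\beta,\beta'}\prod_\beta G_\beta\cdot g,
\]
with $g$ symmetric in each group $\{w_{\beta,s}\}_s$. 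Because the factors are linear forms $w_{\beta,s}-a\,w_{\beta',s'}-b$ that are monic in one variable, Gauss's lemma over the UFD $\ZZ[\frac12]$ shows that $g$ again has $\ZZ[\frac12]$-coefficients. Here I have to check that the leading coefficients $a$ are units. Where they are not literally $1$, I expect them to be $\pm$ powers of $2$, and so still units in $\ZZ[\frac12]$.

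\textbf{Step 2: match $g$ with divided-power monomials.} For $h\in\mathcal H_{\unl{k},\unl{d}}$, passing from $E_h$ to $\E_h$ divides by $\prod h(\beta,r)!$. Since multiplicities within each group $\beta$ are $m_i(\lambda_{h,\beta})$, this gives
\[
\phi_{\unl{d}}(\Psi(\E_h))\doteq \prod G_{\beta,\beta'}\prod G_\beta\prod_\beta P_{\lambda_{h,\beta}}\big(\{w_{\beta,s}-\kappa_\beta\}\big),
\]
where $P_\lambda$ is the variant Hall--Littlewood polynomial \eqref{eq:halllittlewoodpolynomial} with $\hbar=\frac{(\beta,\beta)}{2}\in\ZZ[\frac12]$. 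After the shift $w\mapsto w-\kappa_\beta$, which is an automorphism over $\ZZ[\frac12]$, the $\{P_\lambda\}$ form a $\ZZ[\frac12]$-basis of the symmetric polynomials by \eqref{eq:plambdamlambda}. Hence, with $\mathcal H_{\unl{k},\unl{d}}$ ranging over all tuples $(\lambda_{h,\beta})_\beta$, the products $\prod_\beta P_{\lambda_{h,\beta}}$ form a $\ZZ[\frac12]$-basis of $\bigotimes_\beta\ZZ[\frac12][w_{\beta,\ast}]^{\Sigma_{d_\beta}}$. Expanding $g=\sum_h c_h\prod_\beta P_{\lambda_{h,\beta}}$ with $c_h\in\ZZ[\frac12]$ (finitely many), I would set $F_{\unl{d}}=\sum_h \pm2^{t}c_h\Psi(\E_h)$. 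This lies in $\Psi(\Y^>_n)$ by Proposition~\ref{prop:dividedpoweresebetar}, and it satisfies $\phi_{\unl{d}}(F_{\unl{d}})=\phi_{\unl{d}}(F)$. The vanishing $\phi_{\unl{d}'}(F_{\unl{d}})=0$ for $\unl{d}'<\unl{d}$ follows from Lemma~\ref{lem:spefiltration}, since $\E_h$ is a scalar multiple of $E_h$.

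\textbf{Main obstacle.} The crux is Step 1: I must show the quotient $g$ is integral, and that the wheel-condition divisibilities still hold integrally. Over $\CC$, Lemma~\ref{prop:mainpropertyspe} gives the factorization, and the only issue is denominators. My first attempt is a leading-coefficient argument: each $G$-factor has a unit leading coefficient in some variable, so polynomial division by it keeps coefficients in $\ZZ[\frac12]$. If some factor has leading coefficient $2$, that is harmless in $\ZZ[\frac12]$. The remaining concern is whether the $\doteq$ constants and the $\kappa_\beta$ shifts introduce any odd denominators. Lemma~\ref{phiyangianrs} indicates they lie in $\ZZ[\frac12]^\times$ and $\ZZ[\frac12]$ respectively, which would close the argument.
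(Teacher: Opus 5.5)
Your proposal is correct and follows essentially the same route as the paper: integrality of $\phi_{\unl{d}}(F)$, divisibility by $\prod_{\beta<\beta'}G_{\beta,\beta'}\prod_\beta G_\beta$ imported from the vanishing arguments of the cited works, the formula for $\phi_{\unl{d}}(\Psi(\E_h))$ in terms of the $P_{\lambda_{h,\beta}}$ (which form a $\ZZ[\frac12]$-basis), and Lemma~\ref{lem:spefiltration} for the vanishing at $\unl{d}'<\unl{d}$. You also check that dividing by the $G$-factors keeps coefficients in $\ZZ[\frac12]$, since each linear factor is monic in one variable. The paper leaves this step implicit, and your argument for it is correct.
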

\begin{proof}
For any $F\in \W^{(n)}_{\unl{k}}$, from the definition of specialization maps \eqref{spe-A}--\eqref{spe-D-2}, and  \eqref{integralformY-1}, we know 
\[\phi_{\unl{d}}(F)\in\ZZ[\tfrac{1}{2}][\{w_{\beta,s}\}_{\beta\in\Delta^{+}}^{1\leq s\leq d_{\beta}}]^{\Sigma_{\unl{d}}}.\] 
From the proofs of \cite[Lemma 3.21]{Tsy21}\cite[Proposition 4.6]{HT24}\cite[Propositions 3.8, 4.6]{HT26}, if $F\in \W^{(n)}_{\unl{k}}$ satisfies that $\phi_{\unl{d}'}(F)=0$ for all
$\unl{d}'\in \text{\rm KP}(\unl{k})$ such that $\unl{d}'<\unl{d}$, then $\phi_{\unl{d}}(F)$ is divisible by
\[\prod_{\beta,\beta'\in \Delta^+}^{\beta<\beta'}G_{\beta,\beta'}\cdot
  \prod_{\beta\in\Delta^{+}} G_{\beta},\]
 which are the same factors that appear in \eqref{eq:spephidEh}. Moreover, for any $\E_h$ of \eqref{eq:pbwd-integralyangian},
 
 \begin{equation}
\label{spe-bfEh}
  \phi_{\unl{d}}(\Psi(\E_{h}))\doteq
  \prod_{\beta,\beta'\in \Delta^+}^{\beta<\beta'}G_{\beta,\beta'}\cdot
  \prod_{\beta\in\Delta^{+}} G_{\beta}\cdot
  \prod_{\beta\in\Delta^{+}}  P_{\lambda_{h,\beta}},
\end{equation}    
where $P_{\lambda_{h,\beta}}\coloneqq \frac{1}{\mathrm{mul}(\lambda_{h,\beta})}\cdot \hat{P}_{\lambda_{h,\beta}}$, cf. \eqref{hlp-rat}.  From Subsection \ref{subs:zbasis}, we know $\{\prod_{\beta\in\Delta^{+}}  P_{\lambda_{h,\beta}}\}_{h\in \mathcal{H}_{\unl{k},\unl{d}}}$ forms  a $\ZZ[\frac{1}{2}]$-basis of  $\ZZ[\frac{1}{2}][\{w_{\beta,s}\}_{\beta\in\Delta^{+}}^{1\leq s\leq d_{\beta}}]^{\Sigma_{\unl{d}}}$, then it follows from Lemma  \ref{lem:spefiltration}  that we can find  $F_{\unl{d}}\in \Psi(\Y^>_n)$ such that $\phi_{\unl{d}}(F)=\phi_{\unl{d}}(F_{\unl{d}})$ and $\phi_{\unl{d}'}(F_{\unl{d}})=0$ for all $\unl{d}'<\unl{d}$.
\end{proof}

 Lemma \ref{lem:span} implies  that we have the opposite inclusion $\W^{(n)}\subset \Psi(\Y^{>}_n)$. Moreover, from the proof of Lemmas \ref{lemma:inclusion}--\ref{lem:span}, we obtain that $\{\Psi(\E_h)\}_{h\in \mathcal{H}}$ form a basis of $\W^{(n)}$. Hence:

\begin{Theorem}\label{ShuffleIY-A}
(1) The $\CC$-algebra isomorphism $\Psi\colon Y_n^> \stackrel{\sim}{\longrightarrow}W^{(n)}$ of \eqref{eq:shufflerealizationyangian} gives rise to 
a $\ZZ[\frac{1}{2}]$-algebra isomorphism $\Psi\colon \Y_n^> \stackrel{\sim}{\longrightarrow} \W^{(n)}$.

\smallskip
\noindent
(2) Theorem {\rm \ref{thm:pbwd-integralyangian}} holds.
\end{Theorem}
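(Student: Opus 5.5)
We prove (1) and (2) together by showing that $\{\Psi(\E_h)\}_{h\in\mathcal H}$ is a $\ZZ[\frac12]$-basis of $\W^{(n)}$ and that it spans $\Psi(\Y^>_n)$. Since $\Psi$ is an injective $\CC$-algebra map (Theorem \ref{thm:shufflerealizationyangian}) and $\W^{(n)}$ is closed under $\star$ (the $\zeta$-factors have $\ZZ[\frac12]$-coefficients and the shuffle symmetrization adds no denominators), this yields both statements.

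\textbf{Linear independence.} By Proposition \ref{prop:dividedpoweresebetar}, every $\E_h$ lies in $\Y^>_n$, and $\Psi(\E_h)\in\W^{(n)}$ by Lemma \ref{lemma:inclusion}. Over $\CC$, $\E_h$ is a nonzero scalar multiple of $E_h$. Hence Theorem \ref{thm:yangian-basis} shows that the family $\{\E_h\}$ is $\CC$-linearly independent, and so it is $\ZZ[\frac12]$-linearly independent.

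\textbf{Spanning $\W^{(n)}$.} Fix $\unl k$ and $F\in\W^{(n)}_{\unl k}$. Order $\mathrm{KP}(\unl k)$ by \eqref{eq:KP-order} and argue by descending through this finite totally ordered set.
\begin{itemize}[leftmargin=0.7cm]
\item Start with the minimal $\unl d$. The hypothesis of Lemma \ref{lem:span} holds vacuously, so there is a $\ZZ[\frac12]$-combination $F_{\unl d}$ of the $\Psi(\E_h)$, $h\in\mathcal H_{\unl k,\unl d}$, with $\phi_{\unl d}(F-F_{\unl d})=0$. Lemma \ref{lem:spefiltration} guarantees that subtracting $F_{\unl d}$ does not disturb $\phi_{\unl d'}$ for $\unl d'<\unl d$.
\item Replace $F$ by $F-F_{\unl d}$, which again lies in $\W^{(n)}_{\unl k}$ and is killed by all $\phi_{\unl d'}$ with $\unl d'\le\unl d$. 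Pass to the next Kostant partition and repeat.
\item After exhausting $\mathrm{KP}(\unl k)$, we are left with $G\in\W^{(n)}_{\unl k}$ annihilated by every $\phi_{\unl d}$. Such a $G$ must be zero. Indeed, applying Lemma \ref{prop:mainpropertyspe} iteratively over $\CC$ expresses $G$ as a combination of $\Psi(E_h)$ whose leading coefficients vanish successively. Equivalently, the joint kernel of all $\phi_{\unl d}$ on $W^{(n)}_{\unl k}$ is zero: the $\Psi(E_h)$ span $W^{(n)}_{\unl k}$, and their specializations are triangular with nonzero diagonal blocks by Lemma \ref{lem:spepsipbwbasis}.
\end{itemize}
Thus $F$ is a $\ZZ[\frac12]$-combination of the $\Psi(\E_h)$, and $\W^{(n)}\subset\Psi(\operatorname{span}_{\ZZ[\frac12]}\{\E_h\})\subset\Psi(\Y^>_n)$. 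Combined with Lemma \ref{lemma:inclusion}, this gives $\Psi(\Y^>_n)=\W^{(n)}$, proving (1). Since $\Psi$ is injective, $\Y^>_n=\operatorname{span}_{\ZZ[\frac12]}\{\E_h\}$, which together with linear independence proves (2).

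\textbf{Main obstacle.} The delicate point is the integrality inside Lemma \ref{lem:span}, which we take as given. Two facts are needed there.
\begin{itemize}[leftmargin=0.7cm]
\item The divisibility of $\phi_{\unl d}(F)$ by $\prod G_{\beta,\beta'}\prod G_\beta$ must hold over $\ZZ[\frac12]$. This works because these are products of linear factors with leading coefficient $1$ in some variable, so division by them preserves $\ZZ[\frac12]$-coefficients (Gauss's lemma over $\ZZ[\frac12]$).
\item The family $\prod_\beta P_{\lambda_{h,\beta}}$ must be a $\ZZ[\frac12]$-basis, which is \eqref{eq:plambdamlambda} after the shift $w\mapsto w-\kappa_\beta$ with $\kappa_\beta\in\ZZ[\frac12]$. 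The constants hidden in $\doteq$ are powers of $\pm2$, hence units, so no denominators appear.
\end{itemize}
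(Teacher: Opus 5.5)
Your proposal is correct and takes essentially the same route as the paper. One inclusion comes from Lemma~\ref{lemma:inclusion}; the reverse inclusion and the basis property come from iterating Lemma~\ref{lem:span} through the totally ordered set $\mathrm{KP}(\unl{k})$ together with Lemma~\ref{lem:spefiltration}; and linear independence is inherited from Theorem~\ref{thm:yangian-basis}.

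You also make explicit two points the paper leaves implicit: an element annihilated by every $\phi_{\unl{d}}$ must vanish, and the iteration runs upward from the minimal Kostant partition, so your word ``descending'' should read ``ascending''.
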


The integral form $\Y_n^>$ will be used to study modular Yangians in Section \ref{sec:modularshufflerealization}.




\section{Modular Yangians: PBW theorem and the $p$-center in the Drinfeld presentation}\label{sec:modular-yangian-structure}

\subsection{RTT and Drinfeld presentations of Modular Yangians}\label{subsec:modular-RTT-Drinfeld}

Let $\kk$ be an algebraically closed field of characteristic $p>3$, and
let $\fg$ be of type $A_n$, $B_n$, $C_n$ or $D_n$.  The modular Yangian
was studied in type $A$ by Brundan--Topley \cite{BT18},
and in  types $B,D$ by Chang--Hu \cite{CH25}.
The isomorphism between the RTT and Drinfeld
presentations of Yangians in characteristic zero was established by Brundan--Kleshchev \cite{BK05} and Jing--Liu--Molev \cite{JLM18} by using Gauss
decomposition. 
Its proof remains valid over $\kk$ for $p>3$, see
\cite[Theorem~3.1 and Remark~3.10]{CH25}. 
Thus we may use the Drinfeld presentation of the modular Yangian
$Y_n(\kk)$ in all classical types.  Its positive subalgebra $Y_n^>(\kk)$ is the $\kk$-algebra
generated by $\{e_{i,r}\mid 1\leq i\leq n,\ r\in\NN\}$ with the relations
\eqref{eq:Drinfeld-rel1}--\eqref{eq:Drinfeld-rel2}.  We use throughout
the root vectors fixed in Subsection~\ref{subsec:Drinfeldpre}.

We write $Y_n^{\mathrm{RTT}}(\kk)$ for the RTT presentation of modular Yangian.  Recall that  $N=n+1,2n+1,2n,2n$ in types $A_n,B_n,C_n,D_n$, respectively, and set
\begin{equation}\label{eq:RTT-generating-matrix}
 t_{ij}(u)=\delta_{ij}+\sum_{s\geq1}t_{ij}^{(s)}u^{-s},
 \qquad
 T(u)=\sum_{i,j=1}^N E_{ij}\otimes t_{ij}(u).
\end{equation}
In type $A$, let $P=\sum_{i,j}E_{ij}\otimes E_{ji}$ and
$R(u)=1-P/u$.  In types $B,C,D$,  put
\[
 P=\sum_{i,j}E_{ij}\otimes E_{ji},
 \qquad
 Q=\sum_{i,j}\theta_i\theta_jE_{ij}\otimes E_{i'j'},
\]
where $\theta_i=1$ in the orthogonal cases and
$\theta_i=1$ for $i\leq n$, $\theta_i=-1$ for $i>n$ in type $C$.
Then
\[
 R(u)=1-\frac{P}{u}+\frac{Q}{u-\kappa},
 \qquad
 \kappa=
 \begin{cases}
  N/2-1,&\fg=\mathfrak{so}_N,\\
  N/2+1,&\fg=\mathfrak{sp}_N.
 \end{cases}
\]
The extended RTT Yangian is generated by the coefficients in
\eqref{eq:RTT-generating-matrix}, subject to
\begin{equation}\label{eq:modular-RTT-relation}
 R(u-v)T_1(u)T_2(v)=T_2(v)T_1(u)R(u-v).
\end{equation}
For type $A$, the RTT relation
\eqref{eq:modular-RTT-relation} defines
$Y(\mathfrak{gl}_N)$.  For every
$f(u)\in 1+u^{-1}\kk[[u^{-1}]]$, let
\[
 \mu_f:Y(\mathfrak{gl}_N)\longrightarrow Y(\mathfrak{gl}_N),
 \qquad
 \mu_f(T(u))=f(u)T(u).
\]
Then the  Yangian  in type $A$ is the fixed-point subalgebra
\begin{equation*}
 Y_n^{\mathrm{RTT}}(\kk)
 =
 \{x\in Y(\mathfrak{gl}_N)\mid
   \mu_f(x)=x\text{ for all }f(u)\},
\end{equation*}
 see
\cite[(6.1)]{BT18}. For types $B,C,D$, the Yangian $Y_n^{\mathrm{RTT}}(\kk)$ is obtained by imposing
\begin{equation*}
 T^t(u+\kappa)T(u)=1,
\end{equation*}
where $t$ is transpose with respect to the defining bilinear form, see
\cite[(2.10)--(2.17)]{JLM18} and \cite[Subsection~3.1]{CH25}. The Gauss decomposition is
\begin{equation*}
 T(u)=F(u)H(u)E(u),
 \qquad
 H(u)=\operatorname{diag}(h_1(u),\ldots,h_N(u)),
\end{equation*}
where $F(u)$ and $E(u)$ are lower and upper unitriangular, respectively.
For $i<j$, write
\begin{equation*}
 E(u)_{ij}=e_{ij}(u)=\sum_{s\geq1}e_{ij}^{(s)}u^{-s}.
\end{equation*}
 In particular, for $\beta\in\Delta^+$ define
\begin{equation}\label{eq:positive-root-matrix-pair}
 (a_\beta,b_\beta)=
 \begin{cases}
  (i,j),&\beta=\varepsilon_i-\varepsilon_j,\\
  (i,j'),&\beta=\varepsilon_i+\varepsilon_j,\\
  (i,n+1),&\beta=\varepsilon_i\text{ in type }B_n,\\
  (i,i'),&\beta=2\varepsilon_i\text{ in type }C_n,
 \end{cases}
\end{equation}
and put
\begin{equation}\label{eq:root-indexed-Gaussian-generator}
 e_\beta^{\mathrm{RTT}}(u):=e_{a_\beta b_\beta}(u)
 =\sum_{s\geq1}e_\beta^{(s)}u^{-s}.
\end{equation}
These are the positive Gaussian generators of
\cite[Section~4]{JLM18} and \cite[Subsection~3.3]{CH25}.
The positive RTT subalgebra $Y_n^{\mathrm{RTT},>}(\kk)$ is generated by
the coefficients of the upper Gaussian series corresponding to positive roots.

Let $\Phi:Y_n^{\mathrm{Dr}}(\kk)\stackrel{\sim}{\longrightarrow}
 Y_n^{\mathrm{RTT}}(\kk)$ be the RTT--Drinfeld isomorphism, and  $e_i(u)=\sum_{r\geq0}e_{i,r}u^{-r-1}$ be the simple positive
Drinfeld currents.  For $i<n$, it is given by
\begin{equation}\label{eq:nonfinal-Gaussian-formulas}
 \Phi(e_i(u))=e_{i,i+1}\left(u-\frac{i-1}{2}\right),
\end{equation}
and for $i=n$ by
\begin{equation}\label{eq:last-node-Gaussian-formulas}
\begin{aligned}
 A_n:&\quad
 \Phi(e_n(u))=e_{n,n+1}\left(u-\frac{n-1}{2}\right),\\
 B_n:&\quad
 \Phi(e_n(u))=e_{n,n+1}\left(u-\frac{n-1}{2}\right),\\
 C_n:&\quad
 \Phi(e_n(u))=\frac12e_{n,n+1}\left(u-\frac n2\right),\\
 D_n:&\quad
 \Phi(e_n(u))=e_{n-1,n+1}\left(u-\frac{n-2}{2}\right).
\end{aligned}
\end{equation}
Thus $\Phi$ restricts to an isomorphism
\begin{equation}\label{eq:positive-RTT-Drinfeld-isomorphism}
 Y_n^>(\kk)\stackrel{\sim}{\longrightarrow}
 Y_n^{\mathrm{RTT},>}(\kk).
\end{equation}

Equations \eqref{eq:nonfinal-Gaussian-formulas} and
\eqref{eq:last-node-Gaussian-formulas} are  the formulas following
\cite[(1.4)]{JLM18}, they are valid in the modular setting for $p>3$.

We shall also use  the translation automorphisms.  For $c\in\kk$, the assignment
\begin{equation}
   \eta_c(T(u))=T(u-c) 
\end{equation}
gives an automorphism of the RTT Yangian. In terms of Gaussian generators, we have
\[
 \eta_c(e_{ij}(u))=e_{ij}(u-c).
\]

\subsection{The PBW theorem in the Drinfeld presentation}\label{subsec:modular-pbw}

We use the following loop filtrations for $Y^{>}_n(\kk)$ and $Y_n^{\mathrm{RTT},>}(\kk)$ respectively:
\begin{equation*}
  \deg e_{i,r}=r,\qquad \deg e_{ij}^{(s)}=s-1.
\end{equation*}
 The PBW theorem of the RTT Yangian identifies the associated graded algebra with the
enveloping algebra of the current algebra.  On the positive part, we have
\begin{equation}\label{eq:gr-RTT-positive}
 \operatorname{gr}Y_n^{\mathrm{RTT},>}(\kk)
 \simeq U(\fn^+[t]).
\end{equation}
Under \eqref{eq:gr-RTT-positive}, by \cite[Lemma~4.2]{BT18} and
\cite[Remark~3.2(2)]{CH25}, we have
\begin{equation}\label{eq:gr-root-indexed-Gaussian-generator}
 \operatorname{gr}_r e_\beta^{(r+1)}
 =\tilde{x}_\beta t^r,
\end{equation}
cf. \eqref{eq:tildexbeta}. We now express  PBW theorem of $Y^{>}_n(\kk)$ in the
Lyndon root vectors fixed in Subsection~\ref{subsec:Drinfeldpre}. The following elementary filtered lemma will be used at the end of the argument.

\begin{Lemma}\label{lem:filtered-lifting-pbw}
Let $A=\bigcup_{d\geq0}F_dA$ be a nonnegatively filtered
$\kk$-algebra, where $F_{-1}A=0$.  Suppose that
$\{m_\lambda\}_{\lambda\in\Lambda}$ is a family of elements of $A$ whose
leading symbols form a $\kk$-basis of $\operatorname{gr}A$.  Then
$\{m_\lambda\}_{\lambda\in\Lambda}$ is a $\kk$-basis of $A$.
\end{Lemma}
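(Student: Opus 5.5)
Since $F_{-1}A=0$, every nonzero $a\in A$ has a well-defined degree $d(a)=\min\{d\mid a\in F_dA\}$ and leading symbol $\bar a\in \operatorname{gr}_{d(a)}A$. For each $m_\lambda$ we write $d_\lambda$ for its degree and $\bar m_\lambda$ for its leading symbol. We also take the hypothesis to mean that the $\bar m_\lambda$ are homogeneous and form a $\kk$-basis of $\operatorname{gr}A=\bigoplus_d F_dA/F_{d-1}A$. In particular, for each $d$ the symbols $\{\bar m_\lambda\mid d_\lambda=d\}$ form a basis of $F_dA/F_{d-1}A$. The plan has two parts: spanning by induction on the filtration degree, and linear independence by looking at the top degree of a hypothetical relation.

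\emph{Spanning.} We show by induction on $d\geq 0$ that $F_dA$ is spanned by $\{m_\lambda\mid d_\lambda\leq d\}$; the case $F_{-1}A=0$ is trivial. Let $a\in F_dA$. Its class in $F_dA/F_{d-1}A$ is a finite combination $\sum c_\lambda \bar m_\lambda$ with $d_\lambda=d$. Hence $a-\sum c_\lambda m_\lambda\in F_{d-1}A$, and the induction hypothesis finishes the step. Since $A=\bigcup_d F_dA$, the family spans $A$.

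\emph{Independence.} Suppose $\sum_{\lambda\in S} c_\lambda m_\lambda=0$ with $S$ finite and all $c_\lambda\neq0$, and let $d=\max_{\lambda\in S} d_\lambda$. Every term lies in $F_dA$. Projecting to $F_dA/F_{d-1}A$ kills the terms with $d_\lambda<d$ and gives $\sum_{d_\lambda=d} c_\lambda\bar m_\lambda=0$. This contradicts the linear independence of the leading symbols in $\operatorname{gr}_dA$, so no such relation exists.

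There is no real obstacle here. The one point needing care is that the leading symbols be homogeneous and that the degree decomposition of $\operatorname{gr}A$ be used degreewise; this is exactly what $F_{-1}A=0$ (no infinite descent) and the exhaustiveness $A=\bigcup_d F_dA$ guarantee.
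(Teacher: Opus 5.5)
Your proof is correct. The spanning step is an induction on filtration degree that starts from $F_{-1}A=0$, and the independence step projects a hypothetical relation to $F_dA/F_{d-1}A$ for the top degree $d$. The paper states this lemma as an elementary fact without proof, so there is no different route to compare, and your write-up supplies the routine verification it leaves out.
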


The following proposition records the compatibility of the RTT--Drinfeld
isomorphism with the loop filtrations. Similar as \eqref{eq:doteqmean},  we write $x\doteq y$ if $x$ is a nonzero scalar multiple of $y$.

\begin{Proposition}\label{prop:gaussian-lyndon-triangular}
For every $\beta\in\Delta^+$ and $r\in\NN$, we have
\begin{equation}\label{eq:gr-Lyndon-root-vector}
 \operatorname{gr}_r\Phi(e_\beta(r))
 \doteq \tilde{x}_\beta t^r.
\end{equation}

\end{Proposition}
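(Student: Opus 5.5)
The plan is to compute $\operatorname{gr}_r\Phi(e_\beta(r))$ by passing everything to the associated graded $U(\fn^+[t])$, where the commutators defining $e_\beta(r)$ become Lie brackets in $\fn^+[t]$.

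\emph{Step 1: simple roots.} From \eqref{eq:nonfinal-Gaussian-formulas} and \eqref{eq:last-node-Gaussian-formulas}, $\Phi(e_{i,r})$ is, up to the scalar $\tfrac12$ in type $C_n$, the coefficient of $u^{-r-1}$ in $e_{ab}(u-c)$ for the appropriate pair $(a,b)=(a_{\alpha_i},b_{\alpha_i})$ and some $c\in\kk$. Expanding $(u-c)^{-s}=u^{-s}(1+sc\,u^{-1}+\cdots)$, this coefficient equals $e_{ab}^{(r+1)}$ plus a $\kk$-linear combination of the $e_{ab}^{(s)}$ with $s\le r$. These lower terms lie in filtration degree $\le r-1$. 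Hence \eqref{eq:gr-root-indexed-Gaussian-generator} gives $\operatorname{gr}_r\Phi(e_{i,r})\doteq \tilde x_{\alpha_i}t^r$; in particular $\operatorname{gr}_0\Phi(e_{i,0})\doteq\tilde x_{\alpha_i}$.

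\emph{Step 2: commutators.} The loop filtration is an algebra filtration with commutative associated graded on degree-zero symbols. More precisely, $\operatorname{gr}Y^{\mathrm{RTT},>}_n(\kk)\simeq U(\fn^+[t])$ is an isomorphism of graded algebras. So if $x\in F_a$ and $y\in F_b$, then $[x,y]\in F_{a+b}$ and its image in $\operatorname{gr}_{a+b}$ is $[\bar x,\bar y]$. Each $e_\beta(r)$ in \eqref{eq:B-root-vector-special}--\eqref{eq:root-vector-standard} is an iterated commutator containing exactly one generator $e_{k,r}$ and otherwise $e_{\ell,0}$. Applying $\Phi$ and Step 1, we get $\Phi(e_\beta(r))\in F_r$ and
\[
\operatorname{gr}_r\Phi(e_\beta(r))\doteq \sigma_{\beta,r}'(\tilde x_\beta),
\]
provided the bracket is nonzero. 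Here $\sigma_{\beta,r}'$ denotes the same iterated bracket evaluated in $\fn^+[t]$, with $\tilde x_{\alpha_k}t^r$ and $\tilde x_{\alpha_\ell}t^0$ inserted. Since $\fn^+\to\fn^+[t]$, $x\mapsto xt^0$, is a Lie map and multiplication by $t^r$ is $\fn^+$-linear on one tensor slot, the iterated bracket equals $(\text{the same bracket in }\fn^+)\cdot t^r$.

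\emph{Step 3: the finite bracket.} It remains to show that the corresponding iterated bracket of the $\tilde x_{\alpha_i}$ in $\fn^+$ over $\kk$ equals a nonzero scalar times $\tilde x_\beta$. This is exactly the computation behind the footnote $\sigma_{\beta,r}(\tilde x_\beta)\doteq e_\beta(r)$ in Proposition~\ref{prop:dividedpoweresebetar}, done with \eqref{eq:commuformulaFij}. Over $\ZZ[\tfrac12]$, the bracket equals $\pm2^t\tilde x_\beta$: each step produces the next root vector with coefficient in $\{\pm1,\pm2\}$ (the special shapes for $[i,n]$ in $B_n$ and $[i,n,i]$ in $C_n$ produce a factor $2$). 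Since $p>3$, the scalar $\pm 2^t$ stays nonzero in $\kk$. This confirms that the leading term does not vanish and completes the proof.

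\emph{Main obstacle.} The main obstacle is Step 3 in the nonstandard cases, type $B_n$ short roots and type $C_n$ long roots $[i,n,i]$. There the bracket $[X,[X,e_n]]$ could in principle produce a factor like $2$ or vanish. I will verify directly with \eqref{eq:commuformulaFij} that the coefficient is $\pm2^{\pm1}$ or $\pm1$, and hence invertible for $p>3$.
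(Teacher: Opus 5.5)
Your proposal is correct and follows essentially the same route as the paper: read off $\operatorname{gr}_r\Phi(e_{i,r})\doteq\tilde x_{\alpha_i}t^r$ from the shifted Gaussian formulas, push the nested commutators into $U(\fn^+[t])$ via $\operatorname{gr}_{d+e}[a,b]=[\operatorname{gr}_d a,\operatorname{gr}_e b]$, and check that the resulting bracket in $\fn^+$ is $\pm2^{t}\tilde x_\beta\neq0$. The paper treats $[i,n,i]$ in type $C_n$ through the intermediate element $A_i$, and your uniform handling of arbitrary nested commutators covers that case. There are two harmless slips: $U(\fn^+[t])$ is not commutative, so drop that phrase; and for $[i,n]$ in type $B_n$ the bracket has coefficient $1$, since $[F_{in},F_{n,n+1}]=F_{i,n+1}$. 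The factor $2$ arises instead in type $C_n$ from $[F_{in},F_{n,n+1}]=2F_{i,n+1}$.
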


\begin{proof}
For a simple root, use
\eqref{eq:nonfinal-Gaussian-formulas}--\eqref{eq:last-node-Gaussian-formulas}.
If $a\in\kk$ and
$e_{ij}(u)=\sum_{s\geq1}e_{ij}^{(s)}u^{-s}$, then
\[
 e_{ij}(u-a)
 =\sum_{s\geq1}e_{ij}^{(s)}(u-a)^{-s}.
\]
The coefficient of $u^{-r-1}$ in the right hand side is
\[
 e_{ij}^{(r+1)}+
 \sum_{1\leq s\leq r}\binom{r}{s-1}a^{r+1-s}e_{ij}^{(s)}.
\]
Every term in the sum has loop degree at most $r-1$.  Consequently, for the
matrix pair $(a_i,b_i)$ displayed in \eqref{eq:positive-root-matrix-pair},
\begin{equation}\label{eq:simple-triangular}
 \Phi(e_{i,r})\doteq e_{a_i b_i}^{(r+1)}
 \pmod{F_{r-1}Y_n^{\mathrm{RTT}}(\kk)},
\end{equation}
 By \eqref{eq:gr-root-indexed-Gaussian-generator}, we have $\operatorname{gr}_r\Phi(e_{i,r})
 \doteq \tilde{x}_{\alpha_i}t^r$.  Recall that in any filtered associative algebra, if
$a\in F_dA$ and $b\in F_eA$, then
\begin{equation}\label{eq:symbol-of-commutator}
 \operatorname{gr}_{d+e}[a,b]
 =[\operatorname{gr}_d a,\operatorname{gr}_e b].
\end{equation}
If $e_{\beta}(r)$ is of the form  $\eqref{eq:root-vector-standard}$,  we obtain
\[
 \operatorname{gr}_r\Phi(e_\beta(r))
 \doteq 
 [\cdots[[\tilde{x}_{\alpha_{i_1}}t^r,
 \tilde{x}_{\alpha_{i_2}}],\tilde{x}_{\alpha_{i_3}}],\ldots,
 \tilde{x}_{\alpha_{i_\ell}}]\doteq \tilde{x}_\beta t^r.
\]
For $\beta=\epsilon_i$ of type $B$, and $e_{\beta}(r)$ is of the form \eqref{eq:B-root-vector-special}, we have
\[
 \operatorname{gr}_r\Phi(e_{\beta}(r))
 \doteq  [\cdots[[\tilde{x}_{\alpha_i},\tilde{x}_{\alpha_{i+1}}],
 \tilde{x}_{\alpha_{i+2}}],\ldots,
 \tilde{x}_{\alpha_n}t^r]\doteq \tilde{x}_\beta t^r.
\]
For $\beta=2\varepsilon_i$ with $i<n$ in type $C$, where
$e_{\beta}(r)$ is of the form \eqref{eq:C-root-vector-special}, put
\[
 A_i=[\cdots[[e_{i,0},e_{i+1,0}],e_{i+2,0}],\ldots,e_{n-1,0}].
\]
Then $\operatorname{gr}_0 \Phi(A_i)\doteq
\tilde{x}_{\varepsilon_i-\varepsilon_n}$, and
\[
 \operatorname{gr}_r\Phi(e_{\beta}(r))
 \doteq \,[\tilde{x}_{\varepsilon_i-\varepsilon_n},
 [\tilde{x}_{\varepsilon_i-\varepsilon_n},
 \tfrac12\tilde{x}_{2\varepsilon_n}t^r]]
 \doteq \tilde{x}_{\beta}t^r.
\]
This completes the proof.
\end{proof}

We can now prove the  PBW theorem of $Y^{>}(\kk)$.

\begin{Theorem}\label{thm:yangian-basisoverk}
Let $\kk$ be of characteristic $p>3$.  The elements (cf. \eqref{eq:pbwd-yangian})
\[
 \{E_h\}_{h\in\mathcal H},
 \qquad
 E_h=\prod_{(\beta,r)\in\Delta^+\times\NN}^{\longrightarrow}
 e_\beta(r)^{h(\beta,r)},
\]
form a $\kk$-basis of $Y_n^>(\kk)$.
\end{Theorem}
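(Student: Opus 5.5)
The plan is to transport the problem to the RTT side via the isomorphism \eqref{eq:positive-RTT-Drinfeld-isomorphism} and then apply Lemma~\ref{lem:filtered-lifting-pbw}. Since $\Phi$ restricts to an isomorphism $Y_n^>(\kk)\cong Y_n^{\mathrm{RTT},>}(\kk)$, it suffices to show that the elements $\Phi(E_h)=\prod^{\rightarrow}\Phi(e_\beta(r))^{h(\beta,r)}$ form a $\kk$-basis of $Y_n^{\mathrm{RTT},>}(\kk)$. We equip $Y_n^{\mathrm{RTT},>}(\kk)$ with the loop filtration $\deg e_{ij}^{(s)}=s-1$. This filtration is nonnegative with $F_{-1}=0$, so Lemma~\ref{lem:filtered-lifting-pbw} applies.

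\textbf{Step 1: compute leading symbols.} By Proposition~\ref{prop:gaussian-lyndon-triangular}, $\Phi(e_\beta(r))\in F_r$ and $\operatorname{gr}_r\Phi(e_\beta(r))\doteq\tilde{x}_\beta t^r$ with a nonzero scalar. The scalar is a power of $2$ up to sign, hence invertible since $p>3$. The filtration is multiplicative, and leading symbols of products are products of leading symbols in the graded algebra. Hence $\Phi(E_h)\in F_{d(h)}$ with $d(h)=\sum h(\beta,r)\,r$, and
\[
\operatorname{gr}_{d(h)}\Phi(E_h)\doteq\prod^{\rightarrow}_{(\beta,r)}(\tilde{x}_\beta t^r)^{h(\beta,r)}\in U(\fn^+[t]).
\]
This product is nonzero because $U(\fn^+[t])$ is a domain.

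\textbf{Step 2: identify a basis of the graded algebra.} By \eqref{eq:gr-RTT-positive}, $\operatorname{gr}Y_n^{\mathrm{RTT},>}(\kk)\cong U(\fn^+[t])$ over $\kk$. The elements $\{\tilde{x}_\beta t^r\}_{\beta\in\Delta^+,r\in\NN}$ form a $\kk$-basis of $\fn^+[t]$. Indeed, $\tilde{x}_\beta$ is a nonzero multiple of the root vector $E_{a_\beta b_\beta}\pm E_{\ldots}$, and distinct roots give linearly independent vectors. The classical PBW theorem for enveloping algebras of Lie algebras over any field then says that ordered monomials in this basis, taken in any fixed total order, here \eqref{eq:order-Delta-N}, form a $\kk$-basis of $U(\fn^+[t])$. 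Note that no divided powers are needed: this is the ordinary enveloping algebra, and PBW holds in every characteristic.

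\textbf{Step 3: conclude.} The leading symbols of the $\Phi(E_h)$ are therefore exactly, up to units, the PBW basis of $\operatorname{gr}$. By Lemma~\ref{lem:filtered-lifting-pbw}, $\{\Phi(E_h)\}$ is a $\kk$-basis of $Y_n^{\mathrm{RTT},>}(\kk)$. Pulling back along $\Phi$ shows that $\{E_h\}$ is a basis of $Y_n^>(\kk)$.

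The main obstacle is justifying \eqref{eq:gr-RTT-positive} in the modular setting. Specifically, one must check that the positive RTT subalgebra, defined as generated by the upper Gaussian coefficients, has associated graded exactly $U(\fn^+[t])$ rather than something smaller or larger. I would take this from the RTT PBW theorems of \cite{BT18,CH25}. Their ordered monomials in the Gaussian generators $e_\beta^{(s)}$ form a basis of the positive part, and \eqref{eq:gr-root-indexed-Gaussian-generator} identifies their symbols with $\tilde{x}_\beta t^{s-1}$; together these give the isomorphism directly. A secondary check is that the scalars in Proposition~\ref{prop:gaussian-lyndon-triangular} are nonzero in $\kk$, especially the factors $\tfrac12$ in type $C$, which is guaranteed by $p>3$.
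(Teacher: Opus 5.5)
Your argument is correct and uses the same ingredients as the paper's proof: Proposition~\ref{prop:gaussian-lyndon-triangular}, \eqref{eq:gr-RTT-positive}, PBW for $U(\fn^+[t])$, and Lemma~\ref{lem:filtered-lifting-pbw}. It is organized differently, though. You run the filtered lifting argument on $Y_n^{\mathrm{RTT},>}(\kk)$ and pull the basis back along $\Phi$. The paper instead filters $Y_n^>(\kk)$ itself by $\deg e_{i,r}=r$, then uses the surjection $\pi$ of \eqref{eq:current-to-gr-Drinfeld} together with $\operatorname{gr}\Phi$ to prove $\operatorname{gr}Y_n^>(\kk)\cong U(\fn^+[t])$, and lifts on the Drinfeld side.

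What your version buys is that it never needs $\pi$. In particular, it never needs the top-degree parts of \eqref{eq:Drinfeld-rel1}--\eqref{eq:Drinfeld-rel2} to present $U(\fn^+[t])$ over $\kk$. It also gives, after the fact, that $\Phi$ is strictly compatible with the two loop filtrations: for each $d$, the $\Phi(E_h)$ of loop degree at most $d$ form a basis of $F_dY_n^{\mathrm{RTT},>}(\kk)$.

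The price is on the spanning side. Linear independence of the $E_h$ only needs $\Phi$ to be a homomorphism, but pulling back a spanning set uses the \emph{injectivity} of $\Phi$ on $Y_n^>(\kk)$ from \eqref{eq:positive-RTT-Drinfeld-isomorphism}. If $Y_n^>(\kk)$ is read as the abstractly presented algebra, that injectivity is a triangular-decomposition statement. The paper's argument needs only that $\Phi$ is a filtered homomorphism; injectivity of $\Phi|_{Y_n^>(\kk)}$ then follows implicitly, since $\pi$ and $\operatorname{gr}\Phi\circ\pi$ are isomorphisms.

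Since the paper records \eqref{eq:positive-RTT-Drinfeld-isomorphism} as known, your use of it is legitimate. Just be aware that this is the input your route relies on in place of $\pi$.
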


\begin{proof}
Give $Y_n^>(\kk)$ the loop filtration $\deg e_{i,r}=r$, and denote the
symbol of $e_{i,r}$ by $\overline e_{i,r}$.  As in the proofs of RTT presentations, see \cite[Theorem~4.1 and Lemma~4.2]{BT18} and
\cite[Subsection~3.2]{CH25}, we have a  surjection
\begin{equation}\label{eq:current-to-gr-Drinfeld}
 \pi:U(\fn^+[t])\twoheadrightarrow
 \operatorname{gr}Y_n^>(\kk),
 \qquad
 \tilde{x}_{\alpha_i}t^r\longmapsto\overline e_{i,r}.
\end{equation}
The map $\Phi$ is filtered by \eqref{eq:simple-triangular}, from \eqref{eq:positive-RTT-Drinfeld-isomorphism}  and \eqref{eq:gr-RTT-positive}, we obtain the associated  map 
\[
 \operatorname{gr}\Phi:
 \operatorname{gr}Y_n^>(\kk)\longrightarrow U(\fn^+[t]),
 \qquad
 \overline e_{i,r}\longmapsto
 \lambda_i\tilde{x}_{\alpha_i}t^r,\ \lambda_i\neq 0.
\]
The composite $\operatorname{gr}\Phi\circ\pi$ is  invertible, hence $\pi$ is an isomorphism and
\begin{equation}\label{eq:gr-positive-Drinfeld}
 \operatorname{gr}Y_n^>(\kk)=U(\fn^+[t]).
\end{equation}
The ordered monomials in
$\tilde{x}_\beta t^r$ form a basis of $U(\fn^+[t])$.  Proposition~\ref{prop:gaussian-lyndon-triangular} identifies the leading
symbol of $E_h$ with a nonzero scalar multiple of the corresponding PBW
monomial in $U(\fn^+[t])$, then    Lemma~\ref{lem:filtered-lifting-pbw} completes the proof.
\end{proof}

\subsection{The Drinfeld $p$-center}\label{subsec:modular-pcenter}

We now transfer the known Gaussian $p$-central elements to the Lyndon
root vectors. For the orthogonal Yangians, recall the independent index sets used in
\cite[Subsection~3.5]{CH25}.  If $N=2n+1$, put
\begin{equation}\label{eq:independent-Gaussian-indices-B}
\begin{split}
 \mathcal I_B={}&
 \bigl\{(a,b)\mid 1\leq a,b\leq N,\ a<b<a'\bigr\}
 \setminus\bigl\{(a,n+1)\mid 1\leq a<n\bigr\}\\
 &{}\cup\bigl\{(n+1,b)\mid n'<b\leq1'\bigr\},
\end{split}
\end{equation}
and, if $N=2n$, put
\begin{equation}\label{eq:independent-Gaussian-indices-D}
 \mathcal I_D=
 \bigl\{(a,b)\mid 1\leq a,b\leq N,\ a<b<a'\bigr\}.
\end{equation}

\begin{Proposition}\label{prop:RTT-Gaussian-pcentral}
Let $s\geq1$.
\begin{enumerate}[label={\rm(\arabic*)},leftmargin=0.7cm]
 \item In type $A$, for every $1\leq a<b\leq N$,
 \[
  (e_{ab}^{(s)})^p\in Z\bigl(Y_n^{\mathrm{RTT}}(\kk)\bigr).
 \]

 \item In type $B$ (respectively, type $D$), for every
 $(a,b)\in\mathcal I_B$ (respectively,
 $(a,b)\in\mathcal I_D$),
 \[
  (e_{ab}^{(s)})^p\in Z\bigl(Y_n^{\mathrm{RTT}}(\kk)\bigr).
 \]
\end{enumerate}
\end{Proposition}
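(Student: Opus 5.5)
This proposition is essentially a restatement of known results, and I would prove it by importing them and checking that the conventions match. For part (1), I would invoke the $p$-center theorem of Brundan--Topley \cite{BT18}. There one shows, for $Y(\gl_N)$ over $\kk$, that the elements $(e_{ab}^{(s)})^p$ with $a<b$ are central. They are the coefficients of the Gaussian series $e_{ab}(u)^p$, or equivalently of suitable twists of these series by $u\mapsto u^p$; they are central in $Y(\gl_N)$ and form part of the generators of its $p$-center $Z_p(Y_N)$. Since $Y_n^{\mathrm{RTT}}(\kk)$ is defined as the fixed-point subalgebra of the $\mu_f$, it remains to check that $(e_{ab}^{(s)})^p$ lies in this subalgebra. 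This holds because $\mu_f(T(u))=f(u)T(u)$ rescales $F(u),H(u),E(u)$ only through $H(u)$, so the upper Gaussian generators $e_{ab}(u)$ are fixed by every $\mu_f$. An element of the subalgebra that is central in $Y(\gl_N)$ is central in the subalgebra, which gives (1).

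For part (2), I would argue in the same way using Chang--Hu \cite[Subsection~3.5]{CH25}. There the $p$-center of the orthogonal Yangian $Y(\mathfrak{so}_N)$ is described by generators involving $(e_{ab}^{(s)})^p$ for $(a,b)$ in the independent index sets $\mathcal I_B$ and $\mathcal I_D$; these are exactly the sets recalled in \eqref{eq:independent-Gaussian-indices-B} and \eqref{eq:independent-Gaussian-indices-D}. The restriction to independent indices is needed because the relation $T^t(u+\kappa)T(u)=1$ makes the remaining Gaussian generators $e_{ab}(u)$ expressible through these. I would cite that theorem directly.

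The one point to verify is normalization. I would check that the Gaussian generators $e_{ab}^{(s)}$ and the $R$-matrix used here, including the value of $\kappa$ and the transposition, agree with those in \cite{BT18,CH25}. If the normalizations differ by a shift $u\mapsto u-c$, I would apply the automorphism $\eta_c$. Since $\eta_c(e_{ab}(u))=e_{ab}(u-c)$, each $\eta_c(e_{ab}^{(s)})$ is a $\kk$-linear combination of $e_{ab}^{(t)}$ with $t\le s$, all with the same index pair $(a,b)$. In characteristic $p$, the $p$-th power of such a combination is not simply the sum of the $p$-th powers unless the terms commute. I therefore expect this to be the main obstacle. I would avoid it by checking that no shift is needed, since the conventions in \cite{CH25} match \cite{JLM18}. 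Failing that, I would use that $\eta_c$ is an automorphism preserving the center, and that the cited result already gives centrality for all $s$ simultaneously, so the $p$-th power of the image is central whenever the images themselves lie in the cited central family.
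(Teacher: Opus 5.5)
Your proposal takes the same approach as the paper, which proves (1) by citing \cite[Theorem 6.6]{BT18} and (2) by citing \cite[Theorem~4.11]{CH25}; your extra check is correct, namely that $\mu_f$ fixes $E(u)$, so elements central in $Y(\gl_N)$ that lie in the fixed-point subalgebra stay central there. Your normalization worry is also harmless, for the reason you reach at the end: any shift is realized by the automorphism $\eta_c$, and $\eta_c(x^p)=\eta_c(x)^p$ is central whenever $x^p$ is, so you never need to expand the $p$-th power of a linear combination.
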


\begin{proof}
Part~(1) is \cite[Theorem 6.6]{BT18} and Part~(2) follows from
\cite[Theorem~4.11]{CH25}.
\end{proof}

\begin{Proposition}\label{prop:exact-Gaussian-Lyndon-comparison}
Suppose that $\fg$ is of type $A$, $B$, or $D$.  For every
$\beta\in\Delta^+$ and $r\in\NN$, there are a matrix pair
$(A_\beta,B_\beta)$, a shift $c_\beta\in\kk$ such that
\begin{equation}\label{eq:exact-Gaussian-Lyndon-comparison}
 \Phi(e_\beta(r))
 \doteq 
 \eta_{c_\beta}(e_{A_\beta B_\beta}^{(r+1)}).
\end{equation}
They are given by the following table:
\begin{equation}\label{eq:tableabbeta}
\begin{array}{c|c|c|c}
\text{type}&\beta&(A_\beta,B_\beta)&c_\beta\\ \hline
A_n &[i,j]&(i,j+1)&(i-1)/2\\
B_n &[i,j],\ j<n&(i,j+1)&(i-1)/2\\
B_n &[i,n],\ i<n&(n+1,i')&(n-2)/2\\
B_n &\varepsilon_n=[n]&(n,n+1)&(n-1)/2\\
B_n &[i,n,j],\ i<j\leq n&(i,j')&(i-1)/2\\
D_n &[i,j],\ j<n&(i,j+1)&(i-1)/2\\
D_n &[i,n]&(i,n')&(i-1)/2\\
D_n &[i,n,j],\ i<j<n&(i,j')&(i-1)/2
\end{array}
\end{equation}
\end{Proposition}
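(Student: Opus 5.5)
The plan is to argue by induction on the height $|\beta|$ (or on the length of the Lyndon word), working throughout inside $Y_n^{\mathrm{RTT}}(\kk)$ via $\Phi$.

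\textbf{Simple roots.} The base case is immediate. For $\beta=\alpha_i$, equations \eqref{eq:nonfinal-Gaussian-formulas}--\eqref{eq:last-node-Gaussian-formulas} give
\[
\Phi(e_i(u))\doteq e_{a_ib_i}(u-c),
\]
with the shift $c$ listed there. Since $\eta_c(e_{ab}(u))=e_{ab}(u-c)$, taking the coefficient of $u^{-r-1}$ yields exactly $\Phi(e_{i,r})\doteq\eta_c(e^{(r+1)}_{a_ib_i})$. This agrees with the table entries: $(i,i+1)$ with shift $(i-1)/2$; $(n,n+1)$ with shift $(n-1)/2$ in type $B$; and $(n-1,n+1)=(n-1,n')$ with shift $(n-2)/2$ in type $D$.

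\textbf{Inductive step, standard roots.} For the root vectors \eqref{eq:root-vector-standard}, write
\[
e_\beta(r)=[e_{\beta^-}(r),e_{k,0}],
\]
where $\beta^-$ is the Lyndon prefix and $k$ is the last letter. The key tool is the family of Gaussian commutation identities coming from the RTT relation \eqref{eq:modular-RTT-relation}, applied after the automorphism $\eta_{c}$. In type $A$, the standard identity (cf. \cite{BK05,BT18}) is
\[
[e_{ab}^{(r+1)},e_{b,b+1}^{(1)}]=e_{a,b+1}^{(r+1)},
\]
valid for $a<b$ because $[t^{(1)}_{b,b+1},\cdot\,]$ acts on Gaussian generators as the corresponding $\mathfrak{gl}_N$ operator. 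Two observations make this usable here. First, $\eta_c$ fixes $e^{(1)}_{ab}$. Second, $\Phi(e_{k,0})\doteq e^{(1)}_{a_kb_k}$ for every shift. Hence
\[
\Phi(e_\beta(r))\doteq\eta_{c_{\beta^-}}\bigl([e^{(r+1)}_{A_{\beta^-}B_{\beta^-}},e^{(1)}_{a_kb_k}]\bigr),
\]
and the shift is inherited from the first letter $i$, which is why $c_\beta=(i-1)/2$ in the standard cases. The needed brackets reduce to the $\mathfrak g$-equivariance of $T(u)$ under the zero-mode action:
\[
[t_{ab}^{(1)},T(u)]\text{ is given by }[F_{ab},\cdot\,].
\]
Via \eqref{eq:commuformulaFij}, and when $a<b$ and both indices lie in the upper block, this gives
\[
[e^{(s)}_{AB},e^{(1)}_{ab}]=\pm e^{(s)}_{A,\,\cdot}
\]
whenever the target is again a positive-root pair. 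The indices then move along the Lyndon word exactly as in the table: $(i,j)\to(i,j+1)$; then across node $n$ to $(i,n')$ in type $D$ or $(i,(n+1))\to(i,n')$ in type $B$; then $(i,j'+\dots)\to(i,j')$ while descending $n-1,\dots,j$.

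\textbf{Type $B$ short roots $[i,n]$.} Here \eqref{eq:B-root-vector-special} puts $e_{n,r}$ last, so I would induct downward in $i$. The base is $[n]$, and the step is
\[
e_{[i,n]}(r)=[e_{i,0},e_{[i+1,n]}(r)],
\]
with the bracket now taken from the left against the zero-mode $e^{(1)}_{i,i+1}$. The natural Gaussian generator produced is $e_{i,n+1}$, but $(i,n+1)\notin\mathcal I_B$ for $i<n$. One therefore rewrites it through the orthogonality relation $T^t(u+\kappa)T(u)=1$, which expresses $e_{i,n+1}(u)$ as $\pm e_{n+1,i'}(u+\text{const})$ plus no correction terms, as in \cite[Subsection~3.3]{CH25}. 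This changes the shift to $(n-2)/2$, and checking the precise constant against $\kappa=N/2-1$ is where the table value is produced.

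\textbf{Main obstacle.} The main obstacle is this step: getting exact equalities (not merely up to lower loop terms, as in Proposition~\ref{prop:gaussian-lyndon-triangular}) for brackets with zero modes after shifts, and the transpose relation identifying $e_{i,n+1}$ with a shifted $e_{n+1,i'}$, including the careful bookkeeping of the shift constants. I would first verify these identities in low rank ($B_2$, $D_4$) directly from \eqref{eq:modular-RTT-relation}, and then argue that the general case restricts to these via the standard embedding of lower-rank Yangians used in \cite{JLM18}. All coefficients involved are $\pm1$ or $\pm\frac12$, so they remain valid for $p>3$.
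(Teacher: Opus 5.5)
Your treatment of the simple roots and of the standard root vectors \eqref{eq:root-vector-standard} matches the paper. The shift is fixed by the first letter, zero modes are unaffected by $\eta_c$, and each further letter is absorbed by a zero mode bracketed \emph{on the right}: $[e^{(s)}_{ab},e^{(1)}_{b,b+1}]=e^{(s)}_{a,b+1}$, together with its reflected version $e^{(s)}_{a,b'}=-[e^{(s)}_{a,b'-1},e^{(1)}_{b,b+1}]$.

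The gap is in the type $B$ short roots $[i,n]$, $i<n$, which is the only nontrivial case. You justify the brackets by saying that $\operatorname{ad}t^{(1)}_{b,b+1}$ ``acts on Gaussian generators as the $\mathfrak{gl}_N$ operator''. That is true on $T(u)$. On the Gauss factor $E(u)$, however, it is clean only when it moves the \emph{column} index. Moving the row index by a left bracket produces quadratic terms. Already in $Y(\mathfrak{gl}_3)$ one has $[e^{(1)}_{12},e^{(2)}_{23}]=e^{(2)}_{13}-e^{(1)}_{12}e^{(1)}_{23}$, whereas $[e^{(2)}_{12},e^{(1)}_{23}]=e^{(2)}_{13}$.

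Here is what happens concretely in type $B_n$. Take relation \eqref{eq:Drinfeld-rel1} with $(i,j)=(n-1,n)$ and $r=s=0$, and combine it with the clean right bracket $\Phi([e_{n-1,1},e_{n,0}])=\eta_{(n-2)/2}(e^{(2)}_{n-1,n+1})$. This gives
\[
\Phi\bigl(e_{[n-1,n]}(1)\bigr)=\eta_{(n-2)/2}\bigl(e^{(2)}_{n-1,n+1}\bigr)+\tfrac12\bigl(e^{(1)}_{n-1,n}e^{(1)}_{n,n+1}+e^{(1)}_{n,n+1}e^{(1)}_{n-1,n}\bigr).
\]
The quadratic PBW monomial $e^{(1)}_{n-1,n}e^{(1)}_{n,n+1}$ cannot be absorbed into any $\lambda\,\eta_c(e^{(2)}_{n-1,n+1})$. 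So your step does not produce $e_{i,n+1}$.

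Your second claim also fails beyond zero modes. You assert that $T^t(u+\kappa)T(u)=1$ identifies $e_{i,n+1}(u)$ with a shifted $\pm e_{n+1,i'}(u)$ with no correction terms. But the paper shows that the same element above is $\doteq\eta_{(n-2)/2}(e^{(2)}_{n+1,n+3})$, so no such correction-free identification can hold. What actually happens is that left zero-mode brackets act cleanly on the entries of $E(u)^{-1}$. It is $(E(u)^{-1})_{i,n+1}$, not $e_{i,n+1}(u)$, that corresponds to a shifted $e_{n+1,i'}(u)$. Each of your two steps is therefore false as stated, and together they do not give a proof.

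The missing idea is never to pass through $e_{i,n+1}$ at all. Work instead with the reflected simple generators $g_j(u)=e_{(j+1)',j'}(u)$ for $j<n$ and $g_n(u)=e_{n+1,n+2}(u)$. By \cite[Proposition~5.7]{JLM18}, $g_j(u)=-e_j(u+\kappa-j)$ and $g_n(u)=-e_n(u-\tfrac12)$. Hence $\Phi(e_{j,0})=-g_j^{(1)}$ and $\Phi(e_n(u))=-g_n\bigl(u-\tfrac{n-2}{2}\bigr)$.

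Rewrite $e_{[i,n]}(r)$ in right-nested form; this is legitimate because non-adjacent $e_{k,0}$ commute, and your downward induction already does this. Then each left bracket with $g^{(1)}_j=e^{(1)}_{(j+1)',j'}$ becomes a right bracket in the reflected indices:
\[
[g^{(1)}_j,e^{(s)}_{n+1,(j+1)'}]=-[e^{(s)}_{n+1,(j+1)'},e^{(1)}_{(j+1)',j'}]=-e^{(s)}_{n+1,j'}.
\]
This is clean by the type $A$ recursion, and the chain ends at $e^{(s)}_{n+1,i'}$. The orthogonality relation enters only through the simple generators. The shift $(n-2)/2$ comes solely from $g_n$; $\kappa$ plays no role, since only zero modes of $g_j$ with $j<n$ are used.

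Equivalently, you can repair your own induction. Keep the hypothesis in the reflected form $\Phi(e_{[i+1,n]}(r))\doteq\eta_{(n-2)/2}(e^{(r+1)}_{n+1,(i+1)'})$, and use the zero-mode identity $e^{(1)}_{i,i+1}=-e^{(1)}_{(i+1)',i'}$.
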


\begin{proof}
 For type $A$ case, we have by \cite[(4.9)]{BT18} that
\begin{equation}\label{eq:type-A-Gaussian-recursion}
 e_{a,b+1}^{(s)}=[e_{ab}^{(s)},e_b^{(1)}].
\end{equation}
For types $B$ and $D$, we have
\begin{equation}\label{eq:reflected-Gaussian-recursion}
 e_{a,b'}^{(s)}=-[e_{a,b'-1}^{(s)},e_b^{(1)}],
\end{equation}
together with \eqref{eq:type-A-Gaussian-recursion}, see
\cite[Lemma~5.15]{JLM18}. 

For $\beta=[i_1,\dots,i_{\ell}]$, not $[i,n]$ of type $B$, we have 
\[e_{\beta}(r)=[\cdots[[e_{i_1,r},e_{i_2,0}],e_{i_3,0}],\cdots,e_{i_\ell,0}].\]
By \eqref{eq:nonfinal-Gaussian-formulas}--\eqref{eq:last-node-Gaussian-formulas}, there is $c\in \kk$ such that $\Phi(e_{i_1,r})=\eta_c(e^{(r+1)}_{i_1,i_1+1})$ and $\Phi(e_{b,0})=e^{(1)}_{b,b+1}=\eta_c(e^{(1)}_{b,b+1})$.
Applying
\eqref{eq:type-A-Gaussian-recursion}--\eqref{eq:reflected-Gaussian-recursion} successively gives
\eqref{eq:exact-Gaussian-Lyndon-comparison}.

For $\beta=[i,n]$ of type $B$, put
\[
 g_j(u)=e_{(j+1)',j'}(u)\quad(1\leq j<n),
 \qquad
 g_n(u)=e_{n+1,n+2}(u).
\]
Let $\kappa=n-\frac12$,  \cite[Proposition~5.7]{JLM18} gives
\begin{equation}\label{eq:B-lower-right-simple-Gaussians}
 g_j(u)=-e_j(u+\kappa-j)\quad(j<n),
 \qquad
 g_n(u)=-e_n\left(u-\frac12\right).
\end{equation}
In particular,
\[
 \Phi(e_{j,0})=-g_j^{(1)}\quad(j<n),
 \qquad
 \Phi(e_n(u))=-g_n\left(u-\frac{n-2}{2}\right).
\]
We rewrite \eqref{eq:B-root-vector-special} as
\begin{equation}\label{eq:B-short-right-nested}
 e_{[i,n]}(r)\doteq
 [e_{i,0},[e_{i+1,0},\ldots,
 [e_{n-1,0},e_{n,r}]\ldots]].
\end{equation}
It follows from \eqref{eq:type-A-Gaussian-recursion} that
\[
 [g_i^{(1)},[g_{i+1}^{(1)},\ldots,
 [g_{n-1}^{(1)},g_n^{(s)}]\ldots]]
 \doteq e_{n+1,i'}^{(s)}.
\]
Combining this identity with \eqref{eq:B-short-right-nested} yields
\begin{equation}\label{eq:B-short-exact-Gaussian-comparison}
 \Phi(e_{[i,n]}(r))
 \doteq \eta_{(n-2)/2}(e_{n+1,i'}^{(r+1)})
 \qquad(i<n).
\end{equation}
The table \eqref{eq:tableabbeta} is clear from the above proof.
\end{proof}

Combining Propositions \ref{prop:RTT-Gaussian-pcentral}--\ref{prop:exact-Gaussian-Lyndon-comparison}, we obtain the following theorem:
\begin{Theorem}\label{thm:Lyndon-root-pcentral}
Let $\fg$ be of type $A_n$, $B_n$, or $D_n$.  For every
$\beta\in\Delta^+$ and $r\in\NN$,
\begin{equation}\label{eq:Lyndon-root-pcentral}
 e_\beta(r)^p\in Z(Y_n(\kk)).
\end{equation}
In particular, $e_\beta(r)^p\in Z(Y_n^>(\kk))$.
\end{Theorem}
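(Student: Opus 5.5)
The argument is a transfer through the RTT--Drinfeld isomorphism $\Phi$, combining Propositions \ref{prop:RTT-Gaussian-pcentral} and \ref{prop:exact-Gaussian-Lyndon-comparison}.

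\textbf{Step 1: reduce to a translated Gaussian coefficient.} Fix $\beta\in\Delta^+$ and $r\in\NN$. By Proposition \ref{prop:exact-Gaussian-Lyndon-comparison} there is a nonzero scalar $\lambda\in\kk$ with
\[
\Phi(e_\beta(r))=\lambda\,\eta_{c_\beta}\bigl(e_{A_\beta B_\beta}^{(r+1)}\bigr).
\]
Since $\eta_{c_\beta}$ is an algebra automorphism,
\[
\Phi(e_\beta(r)^p)=\lambda^p\,\eta_{c_\beta}\bigl((e_{A_\beta B_\beta}^{(r+1)})^p\bigr).
\]
An automorphism maps the center onto itself. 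Since $\Phi$ is an isomorphism of the full Yangians, it therefore suffices to show $(e_{A_\beta B_\beta}^{(r+1)})^p\in Z(Y_n^{\mathrm{RTT}}(\kk))$.

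\textbf{Step 2: check the index pairs against Proposition \ref{prop:RTT-Gaussian-pcentral}.} We go through table \eqref{eq:tableabbeta}.
\begin{itemize}
\item Type $A$: all pairs $(i,j+1)$ have $i<j+1$, so part (1) applies directly.
\item Type $D$: the pairs $(i,j+1)$ with $j<n$, $(i,n')$, and $(i,j')$ with $i<j<n$ all satisfy $a<b<a'$, so they lie in $\mathcal I_D$.
\item Type $B$, long-type entries: the pairs $(i,j+1)$ with $j+1\le n$, and $(i,j')$ with $i<j\le n$, satisfy $a<b<a'$. They are not of the excluded form $(a,n+1)$ with $a<n$, because $j'\ge n+1$ with equality only when $j=n$, and $(i,n')=(i,n+2)\neq(i,n+1)$.
\item Type $B$, $\varepsilon_n=[n]$: the pair is $(n,n+1)$, which is allowed since only $a<n$ is excluded.
\item Type $B$, $[i,n]$ with $i<n$: the pair is $(n+1,i')$. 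This falls into the added set $\{(n+1,b)\mid n'<b\le 1'\}$, because $n'=n+2\le i'$ and $i<n$ gives $i'>n'$.
\end{itemize}
This is exactly why the short-root vectors in \eqref{eq:B-root-vector-special} were chosen with the $r$ on the last node. The pairs $(a,n+1)$ with $a<n$ are not independent generators, and the chosen vectors avoid them.

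\textbf{Step 3: conclude.} Steps 1 and 2 give $e_\beta(r)^p\in Z(Y_n(\kk))$. Since $Y_n^>(\kk)\subset Y_n(\kk)$, such an element also commutes with all of $Y_n^>(\kk)$, which gives the final assertion.

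\textbf{Main obstacle.} The delicate step is the type $B$ bookkeeping in Step 2. We must make sure every table entry avoids the excluded pairs $(a,n+1)$, $a<n$, and that the conventions for $i'$ match those of \cite{CH25}. A further point is that Proposition \ref{prop:RTT-Gaussian-pcentral} must be read as centrality in the Yangian $Y_n^{\mathrm{RTT}}(\kk)$ itself, not only in the extended one. If it were stated only for the extended Yangian, we would first note that $e^{(s)}_{ab}$ lies in the subalgebra and that centrality passes to it.
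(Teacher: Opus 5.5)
Your proposal is correct and follows the paper's argument. The paper also proves the theorem by combining Proposition~\ref{prop:RTT-Gaussian-pcentral} with the comparison $\Phi(e_\beta(r))\doteq\eta_{c_\beta}(e^{(r+1)}_{A_\beta B_\beta})$ of Proposition~\ref{prop:exact-Gaussian-Lyndon-comparison}, using that $\Phi$ and $\eta_{c_\beta}$ preserve centers; your Step 2 spells out the check, left implicit in the paper, that every pair in table~\eqref{eq:tableabbeta} lies in $\mathcal I_B$ or $\mathcal I_D$. One harmless slip there: in type $B$ you have $j'=2n+2-j\geq n+2$ for all $j\leq n$, so $j'$ never equals $n+1$, which is what your remark $(i,n')=(i,n+2)$ actually uses.
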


\begin{Remark}\label{rem:type-C-pcenter-conjecture}
For type $C$, 
under the assumption $p>3$,
it is stated in \cite[Remark~3.10]{CH25} that
the Drinfeld presentation of the symplectic Yangian is exactly the same as the one over the complex field.
However,
to describe the $p$-center, we need to consider the Drinfeld generators $e_{i,i'}(u)$.
 It seems to be hard to find a suitable automorphism
 which sends the $e_{i,i'}(u)$ to some simple Gaussian generator.
Here, we formulate the type-$C$ statement as the following
conjecture:
\begin{equation}\label{eq:type-C-Lyndon-pcentral-conjecture}
 e_\beta(r)^p\in Z(Y_n(\kk))
 \quad\text{for every }\beta\in\Delta^+,\ r\in\NN,
 \quad \fg=\mathfrak{sp}_{2n}.
\end{equation}
Once \eqref{eq:type-C-Lyndon-pcentral-conjecture} is
proved, Corollary \ref{cor:positive-pcenter-free-module} below also holds for type $C$ Yangians.
\end{Remark}

\begin{Corollary}\label{cor:positive-pcenter-free-module}
Let $\fg$ be of type $A_n$, $B_n$, or $D_n$.  We have
\begin{equation}\label{eq:positive-Lyndon-pcenter}
 Z_p^>(Y_n(\kk))
 :=\kk[e_\beta(r)^p\mid \beta\in\Delta^+,\ r\in\NN]
\end{equation}
is a free polynomial algebra. 
Moreover,
$Y_n^>(\kk)$ is a free module over $Z_p^>(Y_n(\kk))$ with basis
\begin{equation}\label{eq:p-restricted-PBW-basis}
 \left\{
 \prod_{(\beta,r)\in\Delta^+\times\NN}^{\longrightarrow}
 e_\beta(r)^{h(\beta,r)}
 \ \middle|\ 
 0\leq h(\beta,r)<p,\quad h\text{ has finite support}
 \right\}.
\end{equation}
\end{Corollary}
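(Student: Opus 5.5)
The plan is to deduce both assertions from the PBW basis of Theorem \ref{thm:yangian-basisoverk} and the centrality of Theorem \ref{thm:Lyndon-root-pcentral}, using the loop filtration.

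\emph{Step 1: $Z_p^>$ is commutative and central.} By Theorem \ref{thm:Lyndon-root-pcentral}, the elements $z_{\beta,r}:=e_\beta(r)^p$ are central in $Y_n^>(\kk)$. Hence they commute pairwise, and $Z_p^>(Y_n(\kk))$ is a commutative subalgebra. There is therefore a surjection $\kk[X_{\beta,r}]\to Z_p^>(Y_n(\kk))$ sending $X_{\beta,r}\mapsto z_{\beta,r}$, and every element of $Y_n^>(\kk)$ can be multiplied by the $z_{\beta,r}$ on either side in the same way.

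\emph{Step 2: spanning.} Write each $h\in\mathcal H$ uniquely as $h=p\,q+h_0$ with $0\le h_0(\beta,r)<p$. Since the $z_{\beta,r}$ are central, the factor $e_\beta(r)^{h(\beta,r)}=z_{\beta,r}^{q(\beta,r)}e_\beta(r)^{h_0(\beta,r)}$ can be pulled out of the ordered product. This gives
\[
E_h=\Bigl(\prod_{(\beta,r)} z_{\beta,r}^{q(\beta,r)}\Bigr)E_{h_0},
\]
an exact equality. The $E_h$ span $Y_n^>(\kk)$ by Theorem \ref{thm:yangian-basisoverk}, so the restricted monomials \eqref{eq:p-restricted-PBW-basis} generate $Y_n^>(\kk)$ as a $Z_p^>$-module.

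\emph{Step 3: freeness and algebraic independence.} Suppose $\sum_{h_0} c_{h_0}E_{h_0}=0$ with $c_{h_0}\in Z_p^>$, or $c(z)=0$ for some polynomial $c\in\kk[X_{\beta,r}]$. Expand each coefficient as a $\kk$-combination of monomials $z^q=\prod z_{\beta,r}^{q(\beta,r)}$. By Step 2, $z^qE_{h_0}=E_{pq+h_0}$. The map $(q,h_0)\mapsto pq+h_0$ is injective, and the $E_h$ are $\kk$-linearly independent by Theorem \ref{thm:yangian-basisoverk}. Hence all coefficients vanish. Taking $h_0=0$ shows that the distinct monomials $z^q=E_{pq}$ are linearly independent, so $\kk[X_{\beta,r}]\to Z_p^>$ is an isomorphism and $Z_p^>$ is a free polynomial algebra. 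For general $h_0$, the same argument gives freeness of $Y_n^>(\kk)$ with the stated basis. The care needed here is to express each $c_{h_0}$ as a polynomial in the $z_{\beta,r}$ \emph{before} invoking independence; this is legitimate because $Z_p^>$ is generated by them.

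\emph{Main obstacle.} The only real issue is that Step 2 uses the identity $z^qE_{h_0}=E_{pq+h_0}$ exactly, with no lower-order terms. This identity depends on the $z_{\beta,r}$ being genuinely central, i.e.\ on Theorem \ref{thm:Lyndon-root-pcentral}. Commutation only modulo lower filtration degree would not suffice, and is exactly why type $C$ is excluded.

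If one prefers not to rely on exact centrality everywhere, there is a fallback: the leading symbols in $\operatorname{gr}Y_n^>(\kk)=U(\fn^+[t])$ give $(\tilde x_\beta t^r)^p$. Together with the restricted PBW monomials, these form a basis by the classical PBW theorem for $U(\fn^+[t])$. One would then conclude via Lemma \ref{lem:filtered-lifting-pbw}, but that route still needs centrality to make the module structure well defined.
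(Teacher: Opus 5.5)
Your proof is correct and is essentially the paper's argument: the PBW basis of Theorem \ref{thm:yangian-basisoverk}, together with the exact centrality from Theorem \ref{thm:Lyndon-root-pcentral}, gives $E_{pq+h_0}=z^qE_{h_0}$. This identity yields both the algebraic independence of the $e_\beta(r)^p$ (the paper's one-line argument) and the freeness with the restricted basis (for which the paper simply cites \cite[Corollary 5.13]{BT18}).

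One small correction to your closing remark: left multiplication by $Z_p^>(Y_n(\kk))$ defines a module structure whether or not the $e_\beta(r)^p$ are central. What centrality actually supplies is the commutativity of $Z_p^>(Y_n(\kk))$ and the exact factorization $E_h=z^qE_{h_0}$.
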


\begin{proof}
By Theorem~\ref{thm:yangian-basisoverk}, distinct monomials in the
elements $e_\beta(r)^p$ are linearly independent, so
\eqref{eq:positive-Lyndon-pcenter} is a polynomial algebra.  
The rest of the proof is the same as \cite[Corollary 5.13]{BT18}.
\end{proof}

For every classical type, let $I_p(Y_n^>(\kk))$ be the two-sided ideal
generated by the elements $e_\beta(r)^p$, and set
\begin{equation}\label{eq:defresyangian}
 Y_n^{>,[p]}:=Y_n^>(\kk)/I_p(Y_n^>(\kk)).
\end{equation}
In types $A$, $B$, and $D$, Theorem~\ref{thm:Lyndon-root-pcentral}
shows that this is a quotient by central elements.
Following \cite{GT21}, we call it the
\emph{restricted positive Yangian}.  
Corollary~
\ref{cor:positive-pcenter-free-module} shows that its basis consists
of the images of the $p$-restricted ordered monomials. 
In type $C$,
we still use the same notation for the two-sided ideal quotient, without assuming centrality.
Section~\ref{sec:modularshufflerealization} will
establish its $p$-restricted basis independently of
\eqref{eq:type-C-Lyndon-pcentral-conjecture} and identify it with the small Yangian.

\section{Shuffle algebra realization for modular Yangians}\label{sec:modularshufflerealization}


In this section, we shall study the shuffle algebra realization of modular Yangian $Y^{>}_n(\kk)$. First let us define a modular version of shuffle algebra over $\kk$: consider the $\NN^I$-graded $\kk$-vector space 
\[
W^{(n)}(\kk)=\bigoplus_{\underline{k}=(k_1,\dots,k_{n})} W^{(n)}_{\underline{k}}(\kk), 
\]
where $W^{(n)}_{\underline{k}}(\kk)$ consists of $\Sigma_{\underline{k}}$-symmetric rational functions in the variables $\{x_{i,r}\}_{1\leq i\leq n}^{1\leq r\leq k_i}$ satisfying the same pole conditions \eqref{polecondition}  and wheel conditions \eqref{wheel-Y}, but over $\kk$.  With the same shuffle product \eqref{shuffle product-1},  $W^{(n)}(\kk)$ becomes an associative $\kk$-algebra, and the assignment $e_{i,r}\mapsto x^r_{i,1}$ gives rise to a $\kk$-algebra homomorphism
\begin{equation}\label{eq:psi-modular}
 \Psi_{\kk}\colon \ Y^{>}_n(\kk)\longrightarrow W^{(n)}(\kk).
 \end{equation}
However, unlike characteristic $0$ case, $\Psi_\kk$ is not a $\kk$-algebra isomorphism: from Lemma \ref{k times of r-powerof x1}, we have $\Psi_{\kk}(e^p_{i,r})=0$ for any $1\leq i\leq n,\ r\in\NN$. The main result of this section is to determine the kernel $\mathrm{ker}(\Psi_{\kk})$ and  image $\mathrm{im}(\Psi_{\kk})$.

\subsection{The image $\mathrm{im}(\Psi_{\kk})$ -- rank $1$ case}
 We first consider the rank $1$ case. As in  subsection \ref{subs:zbasis}, let
\[
W^{(1)}_{\hbar}(\kk)=\bigoplus_{k\in\NN} W^{(1)}_{\hbar,k}(\kk),\quad \text{in which}\  W^{(1)}_{\hbar,k}(\kk)=\kk[x_1,\dots,x_k]^{\Sigma_k},
\]
and the shuffle product be defined as \eqref{eq:shuffleproducta1hbar}, with $\hbar\in\kk^{\times}$, that is $\hbar\neq 0$. Consider  the subalgebra $\tilde{W}_{\hbar}^{(1)}(\kk)$ generated by $x^{r}_{1}\  (r\in\NN)$.

For a partition $\lambda\in\cP$, 
recall that $m_i(\lambda)$ is the number of parts $\lambda_j$ which are equal to $i$. A partition $\lambda\in\cP$ is called {\em $p$-restricted} if $m_i(\lambda)<p$ for all  $i\in\NN$. For any $k\in\NN$, 
let $I_k\in W^{(1)}_{\hbar,k}(\kk)$ be the $\kk$-subspace spanned by the Hall-Littlewood polynomials $P_{\lambda}$ associated to the $p$-restricted partitions, cf. \eqref{eq:halllittlewoodpolynomial}, then $I_{k}\subset \tilde{W}_{\hbar}^{(1)}(\kk)\cap W^{(1)}_{\hbar,k}(\kk)$. Similar to \cite[(3.1)]{FJMMT03},  we consider another subspace of $W^{(1)}_{\hbar,k}(\kk)$ defined by the following wheel condition:
\begin{equation}\label{eq:wheelp}
 J_k\coloneqq \{f\in W^{(1)}_{\hbar,k}(\kk);~f(x_1,x_2,\dots,x_k)=0\ \text{once}\ x_1=x_2+\hbar=\cdots=x_p+(p-1)\hbar\}.   
\end{equation}

\begin{Proposition}\label{prop:rank1-wheel}
$\tilde{W}^{(1)}_{\hbar}(\kk)\cap W^{(1)}_{\hbar,k}(\kk)\subset J_k$.    
\end{Proposition}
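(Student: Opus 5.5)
The plan is to show that $J:=\bigoplus_{k\in\NN}J_k$ is a subalgebra of $W^{(1)}_{\hbar}(\kk)$ containing every generator $x_1^r$. Since $\tilde W^{(1)}_\hbar(\kk)$ is generated by these, it then lies in $J$, and taking the degree-$k$ component gives the claim.

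\emph{Generators.} The generators lie in $J_1$. For $k<p$ the condition \eqref{eq:wheelp} is vacuous, because there are not $p$ variables to specialize. So $J_k=W^{(1)}_{\hbar,k}(\kk)$ for $k<p$, and in particular $x_1^r\in J_1$ (as $p>3$).

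\emph{Reformulating the wheel condition.} Since $f$ is symmetric, $f\in J_k$ is equivalent to: $f$ vanishes whenever any $p$ pairwise distinct variables $x_{a_1},\dots,x_{a_p}$ satisfy $x_{a_m}=x_{a_{m+1}}+\hbar$ for $1\le m<p$. The key observation is that in characteristic $p$ this chain closes up cyclically. We have $x_{a_1}-x_{a_p}=(p-1)\hbar=-\hbar$, i.e. $x_{a_p}=x_{a_1}+\hbar$. So, reading indices mod $p$ with $a_{p+1}:=a_1$, we get $x_{a_m}-x_{a_{m+1}}=\hbar$ for all $m$. Moreover, on this locus any two of the $p$ variables differ by $j\hbar$ with $1\le j\le p-1$, which is nonzero because $p$ is prime and $\hbar\neq0$.

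\emph{Closure under the product.} Take $F\in J_k$ and $G\in J_\ell$, and impose the wheel specialization on $x_1,\dots,x_p$ inside $F\star G$, with all other variables generic. Consider one summand of $\Sym_{\text{sh}_{k,\ell}}$ in \eqref{eq:shuffleproducta1hbar}. Its denominators $x_r-x_{r'}$ are nonzero at the specialized point: for two wheel variables by the remark above, and otherwise by genericity. So each summand can be evaluated termwise. There are three cases.
\begin{itemize}
\item If all $p$ wheel variables are arguments of $F$, the summand vanishes because $F\in J_k$ (by the symmetric reformulation).
\item If all of them are arguments of $G$, it vanishes because $G\in J_\ell$.
\item Otherwise both groups meet the cycle $a_1\to a_2\to\cdots\to a_p\to a_1$. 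Going around the cycle, there must be a step $m$ with $x_{a_m}$ an argument of $G$ and $x_{a_{m+1}}$ an argument of $F$. The summand then contains the factor $x_{a_{m+1}}-x_{a_m}+\hbar$, with $r=a_{m+1}$ in the $F$-group and $r'=a_m$ in the $G$-group. This factor equals $-\hbar+\hbar=0$.
\end{itemize}
Hence every summand vanishes, so $F\star G\in J_{k+\ell}$, and $J$ is a subalgebra. This proves the proposition.

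The main point to handle carefully is the third case. Without the cyclic closure, which is exactly where characteristic $p$ is used, a split of the form $F$-group $=\{a_1,\dots,a_j\}$, $G$-group $=\{a_{j+1},\dots,a_p\}$ would give no vanishing factor. One also needs to check that no denominator vanishes on the wheel locus, so that the termwise evaluation is legitimate; this holds by the argument above.
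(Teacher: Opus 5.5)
Your proof is correct, but it is organized differently from the paper's. The paper does not show that $J=\bigoplus_k J_k$ is a subalgebra. Instead, for $k\geq p$ it expands an arbitrary product of generators $x_1^{\mu_1}\star\cdots\star x_1^{\mu_k}$ as a sum over all $\sigma\in\Sigma_k$, carrying the full product of factors $\frac{x_{\sigma(i)}-x_{\sigma(j)}+\hbar}{x_{\sigma(i)}-x_{\sigma(j)}}$ for $i<j$. It then kills each term on the wheel locus in one of two ways. Either some consecutive pair $\ell,\ell+1$ occurs in the wrong order, giving the factor $x_{\ell+1}-x_\ell+\hbar=0$. Or $1,\dots,p$ occur in increasing order, and then the pair $(1,p)$ gives $x_1-x_p+\hbar=p\hbar=0$. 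That second case is exactly your wrap-around step. So the key mechanism is the same in both proofs: in characteristic $p$ the chain closes into a $p$-cycle, and any splitting of the cycle between an ``earlier'' and a ``later'' group produces a vanishing numerator. The paper's argument is the special case of yours in which every group is a singleton, so it needs no hypothesis on the factors and no case analysis. Yours proves the stronger structural fact that $J$ is closed under $\star$. Hence the $p$-wheel condition is preserved by shuffle products of arbitrary elements, not just generators, and the inclusion follows from the trivial $J_1=W^{(1)}_{\hbar,1}(\kk)$. You are also more careful than the paper in checking that no denominator vanishes on the wheel locus, which is what justifies evaluating the symmetrization term by term.
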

\begin{proof}
If $k<p$, then condition \eqref{eq:wheelp} is vacuous,
and $J_k=W^{(1)}_{\hbar,k}(\kk)$.
If $k\geq p$, then it suffices to show that for any $\mu_1,\dots,\mu_k\in \NN$, 
 \[g(x_1,\dots,x_k)=x^{\mu_1}_{1}\star \cdots \star x^{\mu_k}_1=\sum_{\sigma\in \Sigma_{k}}\left(x^{\mu_1}_{\sigma(1)}x^{\mu_2}_{\sigma(2)}\cdots x^{\mu_k}_{\sigma(k)}\prod_{1\leq i<j\leq k}\tfrac{x_{\sigma(i)}-x_{\sigma(j)}+\hbar}{x_{\sigma(i)}-x_{\sigma(j)}}\right)\in  J_k.\]
 Let $x_1=x_2+\hbar=\cdots+x_p+(p-1)\hbar$. If there is $1\leq \ell\leq p-1$ such that $\sigma^{-1}(\ell)>\sigma^{-1}(\ell+1)$,
 and suppose $\sigma^{-1}(\ell)=j$, 
 $\sigma^{-1}(\ell+1)=i$, then the term corresponding to $\sigma$ in the above symmetrization is zero due to the factor $\frac{x_{\sigma(i)}-x_{\sigma(j)}+\hbar}{x_{\sigma(i)}-x_{\sigma(j)}}$; if $\sigma^{-1}(\ell)<\sigma^{-1}(\ell+1)$ for any $1\leq \ell\leq p-1$, and  suppose $\sigma^{-1}(1)=i$, $\sigma^{-1}(p)=j$, 
 then the term corresponding to $\sigma$ in the above symmetrization is also zero due to the factor $\frac{x_{\sigma(i)}-x_{\sigma(j)}+\hbar}{x_{\sigma(i)}-x_{\sigma(j)}}$. Therefore $g(x_1,\dots,x_k)\in J_k$.
\end{proof}

\begin{Proposition}
 For any $k\in\NN$, we have  $J_k\subset I_k$.  
\end{Proposition}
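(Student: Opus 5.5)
Since $I_k$ is spanned by $P_\lambda$ with $\lambda$ $p$-restricted, and $\{P_\lambda\}_{\lambda\in\cP_k}$ is a $\kk$-basis of $\kk[x_1,\dots,x_k]^{\Sigma_k}$ (reduce \eqref{eq:plambdamlambda} mod $p$; the transition to $m_\lambda$ is unitriangular over $\ZZ[\frac12]$, and $\hbar\in\kk^\times$ may be rescaled to $1$, or handled with $\hbar$ as a parameter), the claim amounts to a dimension count in each degree. Concretely, I would show that $J_k$ and $I_k$ have the same dimension in each homogeneous (total degree) piece, or more precisely that $J_k\cap F_d\subset I_k+$(lower) where $F_d$ is the degree filtration. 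Since $J_k$ is not homogeneous when $\hbar\neq0$, I would work with the filtration by total degree. The plan is: (i) prove $\dim(J_k\cap F_d)\le \#\{\lambda\ p\text{-restricted},\ |\lambda|\le d\}$; (ii) combine this with $I_k\subset J_k$ and $\dim(I_k\cap F_d)=\#\{\lambda\ p\text{-restricted}, |\lambda|\le d\}$ to conclude equality. Here $I_k\subset J_k$ follows from Proposition \ref{prop:rank1-wheel}, since $I_k\subset\tilde W^{(1)}_\hbar(\kk)$. The dimension count for $I_k$ uses the triangularity \eqref{eq:plambdamlambda}: the leading term of $P_\lambda$ is $m_\lambda$, of degree $|\lambda|$.

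For (i) I would use a Feigin--Jimbo--Miwa--Mukhin--Takeyama type filtration as in \cite{FJMMT03}: a Gordon filtration by specialization maps indexed by partitions of $k$ into parts of size $<p$. For a composition $k=\sum_{a=1}^{p-1} a\, c_a$, i.e.\ a "cluster type" $\mu$, specialize the variables into strings $x, x-\hbar,\dots,x-(a-1)\hbar$, one string of length $a$ per cluster. Because of the wheel condition \eqref{eq:wheelp}, no string of length $\ge p$ need be considered. Order the cluster types so that longer clusters come first. The core claim is then: if $f\in J_k$ vanishes under all specializations of types larger than $\mu$, then its $\mu$-specialization is divisible by an explicit product of linear factors $(w_s-w_t+c\hbar)$ between clusters, coming from the vanishing of the higher specializations and from symmetry. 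After division, the quotient is an arbitrary polynomial symmetric under permutations of equal-length clusters. Counting monomials of degree $\le d$ in such quotients, summed over $\mu$, gives exactly the number of $p$-restricted partitions of length $k$ with $|\lambda|\le d$: a cluster of length $a$ carrying exponent $r$ corresponds to $a$ equal parts $r$ in $\lambda$, and this correspondence is the classical bijection underlying $m_r(\lambda)<p$.

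The main obstacle is the divisibility and degree bookkeeping in (i): showing that the gluing factors have exactly the right degree, so that the count matches, and that the argument works over $\kk$ with $\hbar$ of additive order $p$. The latter matters because strings $x-j\hbar$ wrap around modulo $p$, which is precisely why clusters are capped at length $p-1$. As an alternative that avoids this, I would first try a shortcut. The leading homogeneous parts of $J_k$ lie in the $\hbar=0$ analogue, i.e.\ symmetric polynomials vanishing when $p$ variables coincide. That space has a known basis indexed by $p$-restricted partitions: it is spanned by the $m_\lambda$ with $m_i(\lambda)<p$ modulo degenerations, as in the Feigin--Stoyanovsky setting. This would give the upper bound on $\operatorname{gr}J_k$ directly, though one must check that taking top degree components of a wheel condition yields the coincidence condition. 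It does: substitute $x_j=x+ (1-j)\hbar$ and take the top degree term in $x$.
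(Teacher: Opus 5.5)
Your route (i) fails in characteristic $p$: the Gordon-type count does not give the number of $p$-restricted partitions. Take $p=3$, $k=3$. The cluster types are $(2,1)$ and $(1,1,1)$.

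For the $(2,1)$ type, $x_1=w_1$, $x_2=w_1-\hbar$, $x_3=w_2$, the wheel condition forces only the factor $w_2-w_1-\hbar$, namely the singleton completing the coset $w_1+\mathbb{F}_3\hbar$. Anything killed by this specialization is a multiple of $\prod_{i\neq j}(x_i-x_j-\hbar)$, which has degree $6$. So your bound gives $\dim(J_3\cap F_3)\leq 6$.

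In fact the wheel locus is $e_1=0$, $e_2=-\hbar^2$, so $J_3=(e_1,\,e_2+\hbar^2)\subset\kk[e_1,e_2,e_3]$. Hence $\dim(J_3\cap F_3)=5$, matching $(1,0,0),(2,0,0),(1,1,0),(3,0,0),(2,1,0)$. The true series is $(t+t^2-t^3)/\bigl((1-t)(1-t^2)(1-t^3)\bigr)$. It cannot equal $t^{D}/(1-t)^2+t^{D'}/\bigl((1-t)(1-t^2)(1-t^3)\bigr)$ for any $D,D'$, because the latter has numerator $t^{D}(1+t)(1+t+t^2)+t^{D'}$ with nonnegative coefficients.

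The image of the $(2,1)$-specialization is cut down by wrap-around conditions on leading forms, not by linear divisibility. Your correspondence ``cluster of length $a$ with exponent $r$ $\leftrightarrow$ $a$ parts equal to $r$'' also breaks, because different clusters can carry the same exponent. Here that produces the non-restricted $(1,1,1)$.

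Your fallback is the right idea, and once repaired it is a complete proof. However, its key input is asserted with an invalid justification. Let $J_k^0$ be the space of symmetric polynomials vanishing on $x_1=\cdots=x_p$. This space is \emph{not} described by $p$-restricted partitions in the Feigin--Stoyanovsky (characteristic $0$) setting; there it is smaller. For example, $x_1+\cdots+x_p\in J_p^0$ only because $p=0$ in $\kk$.

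You need a characteristic-$p$ computation. Put $x_1=\cdots=x_p=x$ and $y=(x_{p+1},\dots,x_k)$. Fix the last $k-p$ exponents of a monomial in $m_\lambda$. Such monomials occur with multiplicity $p!/\prod_r a_r!$, where $a_r$ is the number of the first $p$ exponents equal to $r$. This multiplicity vanishes in $\kk$ unless all $a_r<p$ fails, i.e. unless those $p$ exponents are all equal. Hence
\[
m_\lambda(x,\dots,x,y)=\sum_{r:\ m_r(\lambda)\geq p}x^{pr}\,m_{\lambda\setminus(r^p)}(y),
\]
where $\lambda\setminus(r^p)$ removes $p$ parts equal to $r$. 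Since $\lambda$ is recovered from $(r,\lambda\setminus(r^p))$, a symmetric $g=\sum_\lambda c_\lambda m_\lambda$ lies in $J_k^0$ if and only if $c_\lambda=0$ for every non-$p$-restricted $\lambda$.

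Also, the passage to leading forms must scale all the variables ($x\mapsto tx$, $y\mapsto ty$, then take the top power of $t$). Taking only the top term in $x$ is not enough.

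With these two fixes you get
\[
\operatorname{gr}I_k\subset\operatorname{gr}J_k\subset J_k^0=\operatorname{span}\{m_\lambda\mid\lambda\ p\text{-restricted}\}=\operatorname{gr}I_k,
\]
using $I_k\subset J_k$ (Proposition~\ref{prop:rank1-wheel}) and the fact that $P_\lambda$ has leading form $m_\lambda$. Hence $I_k=J_k$.

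This is shorter than the paper's argument. The paper inducts on $k$ and on degree: it restricts by $\rho$ to use $J_k=I_k$, and uses $\tau$ to force $m_0(\gamma)\leq p-2$. It then peels off $P_{\tilde\gamma}$ and divides by $x_1\cdots x_{k+1}$.
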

\begin{proof}
If $k<p$, 
every partition of length $k$ is $p$-restricted, thus from subsection \ref{subs:zbasis} we know $I_k=J_k=\kk[x_1,\dots,x_k]^{\Sigma_k}$.
Next, for any $k\geq p-1$, we show that if $I_k=J_k$,
then $J_{k+1}\subset I_{k+1}$.

For any $f\in J_{k+1}$,
there is a unique decomposition $f=x_1\cdots x_{k+1}\cdot g+h$,
in which $g,h\in \kk[x_1,\dots,x_{k+1}]^{\Sigma_{k+1}}$,
and
 \begin{equation}\label{eq:conditionh}
 h=\sum_{\lambda\in \cP_{k+1}} b_{\lambda} m_{\lambda},\quad \text{where all}\ \lambda\ \text{with}\ b_\lambda\neq 0\ \text{satisfies}\ m_{0}(\lambda)>0. 
 \end{equation}
 Consider the map $\rho\colon \kk[x_1,\dots,x_{k+1}]^{\Sigma_{k+1}} \longrightarrow  \kk[x_1,\dots,x_{k}]^{\Sigma_{k}}$, which sends
  \begin{equation*}
q(x_1,\dots,x_k,x_{k+1})\mapsto q(x_1,\dots,x_k,0),
  \end{equation*}
then we have $\rho(h)=\rho(f)\in J_k=I_k$, that is $\rho(h)=\sum_{\lambda\in \cP_k} c_{\lambda}P_{\lambda}$, where  all $\lambda$ with $c_\lambda\neq 0$ are $p$-restricted and they constitute a  finite set.
Let $<$ stand for the lexicographical order on $\cP_k$:
\[\lambda<\mu \quad \text{if}\quad  |\lambda|<|\mu|\ \text{or}\ |\lambda|=|\mu|,\ \text{and}\ \exists\ j\ \text{s.t.}\ \lambda_i=\mu_i\ \text{for}\ i<j,\ \text{but}\ \lambda_j<\mu_j. \]

\smallskip
  
\textbf{Claim:}~If $\rho(h)\neq 0$, and $\rho(h)=c_{\gamma}P_{\gamma}+\sum_{\mu<\gamma} c_{\mu}P_{\mu}$, with $c_{\gamma}\neq 0$, then  $m_0(\gamma)\leq p-2$.
\smallskip

\noindent To prove the claim, first note that we can also write $\rho(h)$ as a linear combination of monomial basis, so that  $\rho(h)=c_{\gamma} m_{\gamma}+\sum_{\mu<\gamma} a_{\mu}m_{\mu}$.
For any $\lambda=(\lambda_1\geq \cdots\geq \lambda_k)\in\cP_k$, let $\tilde{\lambda}=(\lambda_1\geq \cdots \geq \lambda_k\geq 0)\in\cP_{k+1}$ be the associated partition of length $k+1$, then $\rho(m_{\tilde{\lambda}})=m_{\lambda}$. Since $h$ satisfies the condition \eqref{eq:conditionh}, we see $h=c_{\gamma} m_{\tilde{\gamma}}+\sum_{\mu<\gamma} a_{\mu}m_{\tilde{\mu}}$.
Now consider the map
$\tau\colon \kk[x_1,\dots,x_{k+1}]^{\Sigma_{k+1}} \longrightarrow  \kk[x_1,\dots,x_{k-p+1}]^{\Sigma_{k-p+1}}$,
which sends
\begin{equation*}
q(x_1,\dots,x_{k-p+1},x_{k-p+2},x_{k-p+3},\dots,x_{k+1})\mapsto q(x_1,\dots,x_{k-p+1},0,\hbar,\dots,(p-1)\hbar).
  \end{equation*}
For $\mu=(\mu_1\geq \mu_2\geq  \cdots\geq  \mu_{k+1})\in\cP_{k+1}$ with  $m_0(\mu)>0$, let $\mu'=(\mu_1\geq \mu_2\geq\cdots\geq \mu_{k-p+1} )$ be the associated partition of length $k-p+1$, then we have $\tau(m_{\mu})=\sum_{\lambda\leq \mu'} b_{\lambda} m_{\lambda}$. If $m_{0}(\gamma)>p-2$, then $m_0(\gamma)=p-1$ (since $\gamma$ is $p$-restricted),   $m_{0}(\tilde{\gamma})=p$, thus for any $\mu\in \cP_{k+1}$ with $\mu<\tilde{\gamma}$, we have $\mu'<\tilde{\gamma}'$,  where $\tilde{\gamma}'=(\gamma_1\geq \cdots\geq \gamma_{k-p+1})$ and $\mu'=(\mu_1\geq \cdots\geq \mu_{k-p+1} )$. Then it follows from   $f\in J_{k+1}$ that
\[
0=\tau(f)=\tau(h)=c_{\gamma}\tau(m_{\tilde{\gamma}})+\sum_{\mu<\gamma} a_{\mu}\tau(m_{\tilde{\mu}})=c_{\gamma} m_{\tilde{\gamma}'}+\text{lower terms},   
\]
 which is a contradiction. Hence  $m_{0}(\gamma)\leq p-2$ and the above claim is true. 
 
\smallskip

Therefore, if $\rho(h)=c_{\gamma}P_{\gamma}+\sum_{\mu<\gamma} c_{\mu}P_{\mu}$, with $c_{\gamma}\neq 0$, then the associated partition $\tilde{\gamma}$ is also $p$-restricted.
We can form the Hall-Littlewood polynomial $P_{\tilde{\gamma}}$. From \eqref{eq:plambdamlambda} and $\rho(m_{\tilde{\gamma}})=m_{\gamma}$,  we have $\rho(P_{\tilde{\gamma}})=P_{\gamma}+Q_\gamma$, with $\deg Q_{\gamma}< \deg P_{\gamma}$.
Consider 
\[\tilde{f}=f-c_{\gamma}P_{\tilde{\gamma}},\]
then $\tilde{f}\in J_{k+1}$. Let   $\tilde{f}=x_1\cdots x_{k+1}\cdot \tilde{g}+\tilde{h}$, with $\tilde{g},\tilde{h}\in \kk[x_1,\dots,x_{k+1}]^{\Sigma_{k+1}}$ and $\tilde{h}$ satisfies condition \eqref{eq:conditionh}. Then \[\rho(\tilde{h})=\rho(\tilde{f})=\rho(f)-c_{\gamma} \rho(P_{\tilde{\gamma}})=\sum_{\mu<\gamma}c_{\mu} P_{\mu}-c_{\gamma}Q_{\gamma}.\]
Since $\rho(\tilde{h})=\rho(\tilde{f})\in J_{k}=I_k$, if $\rho(\tilde{h})\neq 0$, we have $\rho(\tilde{h})=b_{\nu} P_{\nu}+\sum_{\mu<\nu} b_{\mu}P_{\mu}$ , with $\nu<\gamma$. Iterating the above procedure, we know for any $f\in J_{k+1}$, there is $w \in I_{k+1}$ with $f-w=x_1\cdots x_{k+1} \cdot g'+h'$, with $\rho(h')=0$. Since $h'$ also  satisfies \eqref{eq:conditionh}, we know $h'=0$ and $g'\in J_{k+1}$. Continuing with induction on the degree of $f$, we finally show $J_{k+1}\subset I_{k+1}$.
\end{proof}

Combining the inclusions $I_{k}\subset \tilde{W}^{(1)}_{\hbar}(\kk)\cap W^{(1)}_{\hbar,k}(\kk)\subset J_k\subset I_k$, we obtain:

\begin{Theorem}\label{thm:rank1basis}
For any $
\hbar\in\kk^{\times}$, let $\tilde{W}^{(1)}_{\hbar}(\kk)$ be the subalgebra of $W^{(1)}_{\hbar}(\kk)$ generated by $x^{r}_1\ (r\in\NN)$, then $\tilde{W}^{(1)}_{\hbar}(\kk)$ is determined by the wheel condition: $f\in \tilde{W}^{(1)}_{\hbar}(\kk)$ if and only if
\begin{equation}\label{eq:wheelcharp}
f(x_1,x_2,\dots,x_k)=0\ \text{once}\ x_1=x_2+\hbar=\cdots=x_p+(p-1)\hbar,   
\end{equation}   
and the set of Hall-Littlewood polynomials $P_{\lambda}$ (cf. \eqref{eq:halllittlewoodpolynomial}) associated to $p$-restricted partitions form a basis of $\tilde{W}_{\hbar}^{(1)}(\kk)$.   
\end{Theorem}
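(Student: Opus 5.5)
The statement is obtained by combining the chain of inclusions
\[
I_k\subset \tilde{W}^{(1)}_{\hbar}(\kk)\cap W^{(1)}_{\hbar,k}(\kk)\subset J_k\subset I_k
\]
degree by degree in $k$, so that all three spaces coincide. The argument then unpacks this into the two claims of the theorem: the wheel characterization and the basis statement.

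First I verify the leftmost inclusion. For a $p$-restricted $\lambda\in\cP_k$, the product $x_1^{\lambda_1}\star\cdots\star x_1^{\lambda_k}$ lies in $\tilde{W}^{(1)}_{\hbar}(\kk)$, and $\mul(\lambda)=\prod_i m_i(\lambda)!$ is a product of factorials of integers $<p$. Hence $\mul(\lambda)$ is invertible in $\kk$, and $P_\lambda\in\tilde{W}^{(1)}_{\hbar}(\kk)$. One point needs care: \eqref{eq:halllittlewoodpolynomial} is an identity over $\ZZ[\frac12]$ (with $\hbar\in\ZZ[\frac12]$). For general $\hbar\in\kk^\times$ I would redo it over $\ZZ[\frac12][\hbar]$ with $\hbar$ an indeterminate and then specialize. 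The unitriangularity \eqref{eq:plambdamlambda} also survives over $\kk$, because leading terms have coefficient $1$. The middle inclusion is Proposition~\ref{prop:rank1-wheel}, and the last is the preceding proposition $J_k\subset I_k$.

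With all three spaces equal to $I_k$ for every $k$, the graded pieces of $\tilde{W}^{(1)}_{\hbar}(\kk)$ are exactly the $J_k$. This gives the characterization: $f\in \tilde{W}^{(1)}_{\hbar}(\kk)$ if and only if each homogeneous component satisfies \eqref{eq:wheelcharp}. The condition is vacuous for $k<p$.

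For the basis claim, I note that $I_k$ is spanned by the $P_\lambda$ with $\lambda$ $p$-restricted of length $k$. Linear independence follows from \eqref{eq:plambdamlambda}: over $\kk$ each $P_\lambda$ equals $m_\lambda$ plus lower-degree monomial symmetric functions, and the $m_\lambda$ form a $\kk$-basis of $\kk[x_1,\dots,x_k]^{\Sigma_k}$. Within each degree the transition matrix is unitriangular, so the $P_\lambda$ are independent.

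The main obstacle is the reduction-mod-$p$ step in the first inclusion, namely ensuring that $P_\lambda$ is well defined over $\kk$ for arbitrary $\hbar\in\kk^\times$ with the same triangularity. I would handle it by the indeterminate-$\hbar$ argument above: Lemma~\ref{k times of r-powerof x1} holds for any $\hbar$, and integrality of the symmetrized expression is a polynomial identity.
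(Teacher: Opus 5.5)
Your proposal is correct and follows the paper's proof: the theorem comes from combining $I_k\subset \tilde{W}^{(1)}_{\hbar}(\kk)\cap W^{(1)}_{\hbar,k}(\kk)\subset J_k\subset I_k$, where the middle and last inclusions are the two preceding propositions. Your extra justifications fill in details the paper leaves implicit: that $\mul(\lambda)$ is invertible in $\kk$ for $p$-restricted $\lambda$, that treating $\hbar$ as an indeterminate makes $P_\lambda$ and the unitriangularity \eqref{eq:plambdamlambda} available for arbitrary $\hbar\in\kk^{\times}$, and that the $P_\lambda$ are linearly independent by unitriangularity.
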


Let $\hbar=1$, then $W^{(1)}_{\hbar=1}(\kk)$ is the  modular shuffle algebra of type $A_1$, and $\Psi_{\kk}(Y^{>}_1(\kk))=\tilde{W}^{(1)}_{\hbar=1}(\kk)$, thus Theorem \ref{thm:rank1basis} determines  $\mathrm{im}(\Psi_{\kk})$ in $A_1$ case.

\subsection{The image $\mathrm{im}(\Psi_{\kk})$ -- general case} \label{subs:impsikk}

Let us now describe the image $\mathrm{im}(\Psi_{\kk})$ in general.  In characteristic $p>3$ case, we still have the  specialization maps
\begin{equation*}
  \phi_{\underline{d}}\colon \ W^{(n)}_{\underline{k}}(\kk)\longrightarrow
  \kk[\{w_{\beta,s}\}_{\beta\in\Delta^{+}}^{1\leq s\leq d_{\beta}}]^{\Sigma_{\unl{d}}},
\end{equation*}
defined as \eqref{spe-A}--\eqref{spe-D-2}. Lemmas \ref{lem:spepsipbwbasis}--\ref{lem:spefiltration} still hold (with coefficients in $\kk$,  root vectors $e_{\beta}(r)$ and ordered monomials $E_h$   defined in the same way as over $\CC$), since their proofs rely only on analyzing the specializations of $\zeta$-factors. Moreover, pick any $F\in W^{(n)}_{\unl{k}}(\kk)$ and $\unl{d}\in\mathrm{KP}(\unl{k})$, the specialization
$\phi_{\unl{d}}(F)$ is divisible by the product $\prod_{\beta\in\Delta^{+}}G_{\beta}$,  with $G_{\beta}$ appeared in \eqref{eq:spephidEh} (the proof is the same as over $\CC$ since it is based solely on the wheel conditions).  We can now define the following {\em reduced specialization map}
\begin{equation}
  \xi_{\unl{d}}\colon\  W^{(n)}_{\unl{k}}(\kk) \longrightarrow
  \kk[\{w_{\beta,s}\}_{\beta\in\Delta^{+}}^{1\leq s\leq d_{\beta}}]^{\Sigma_{\unl{d}}}
  \qquad \text{via} \qquad \xi_{\unl{d}}(F)\coloneqq \frac{\phi_{\unl{d}}(F)}{\prod_{\beta\in\Delta^{+}}G_{\beta}}.
\label{reducedspe}
\end{equation}
Let $\tilde{W}^{(n)}_{\unl{k}}(\kk)$ be the subspace  of $W^{(n)}_{\unl{k}}(\kk)$ consisting of functions $F$ that  satisfies : 
\begin{itemize}[leftmargin=0.7cm]
    \item Let $G=\xi_{\unl{d}}(F)$,  that is the symmetric polynomial in $\{w_{\beta,s}\}_{\beta\in\Delta^{+}}^{1\leq s\leq d_{\beta}}$ obtained by reduced specialization map, then $G(\{w_{\beta,s}\}_{\beta\in\Delta^{+}}^{1\leq s\leq d_{\beta}})=0$ once there is $\beta\in\Delta^{+}$ such that
    \begin{equation}\label{eq:wheel-higherrank}
    w_{\beta,1}=w_{\beta,2}+\tfrac{(\beta,\beta)}{2}=\cdots=w_{\beta,p}+(p-1)\cdot \tfrac{(\beta,\beta)}{2}.
\end{equation}
\end{itemize}
Let $\tilde{W}^{(n)}(\kk)=\bigoplus_{\unl{k}\in\NN^{n}} \tilde{W}^{(n)}_{\unl{k}}(\kk)$. Similar to the proofs of \cite[Lemma 3.51]{Tsy21}\cite[Propositions 4.11]{HT24}\cite[Propositions 3.13, 4.11]{HT26}, we have

\begin{Proposition}
 $\mathrm{im}(\Psi_{\kk})\subset \tilde{W}^{(n)}(\kk)$. 
\end{Proposition}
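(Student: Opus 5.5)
We have to show that $\xi_{\unl d}(F)$ satisfies the $p$-wheel condition \eqref{eq:wheel-higherrank} for every $F$ in the image. The plan is to reduce to PBW monomials and then to the rank~1 computation of Proposition~\ref{prop:rank1-wheel}.

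\emph{Step 1: reduce to PBW monomials.} By Theorem~\ref{thm:yangian-basisoverk}, $\mathrm{im}(\Psi_{\kk})\cap W^{(n)}_{\unl k}(\kk)$ is spanned by the $\Psi_{\kk}(E_h)$ with $h\in\mathcal H_{\unl k}$. Both $\xi_{\unl d}$ and the vanishing condition \eqref{eq:wheel-higherrank} are linear, so it suffices to fix $\unl d\in\mathrm{KP}(\unl k)$ and $h\in\mathcal H_{\unl k}$ and show that $\xi_{\unl d}(\Psi_{\kk}(E_h))$ vanishes on each locus \eqref{eq:wheel-higherrank}.

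\emph{Step 2: the cases $\deg h\neq\unl d$.}
\begin{itemize}
\item If $\unl d<\deg(h)$, then $\phi_{\unl d}(\Psi_{\kk}(E_h))=0$ by Lemma~\ref{lem:spefiltration} (valid over $\kk$), so $\xi_{\unl d}$ of it is zero.
\item If $\unl d>\deg(h)$, Lemma~\ref{lem:spepsipbwbasis} no longer gives a closed product. Instead I would expand $E_h$ as the shuffle product $\Psi_{\kk}(e_{\beta_1}(r_1))\star\cdots\star\Psi_{\kk}(e_{\beta_m}(r_m))$ of root-vector images.
\end{itemize}
For a general $F_1\star\cdots\star F_m$, the standard argument of \cite[Lemma 3.51]{Tsy21}, \cite[Prop.~4.11]{HT24} proceeds as follows. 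Substitute the $\phi_{\unl d}$ specialization into each term of the symmetrization. Each term distributes the groups $\{x^{(\beta,s)}\}$ among the factors. Now fix $\beta$ and the $p$ groups $(\beta,1),\dots,(\beta,p)$ in the wheel configuration $w_{\beta,a}=w_{\beta,a+1}+\tfrac{(\beta,\beta)}2$. Two cases arise.
\begin{itemize}
\item Suppose some consecutive pair of these groups is placed in factors in the ``wrong'' order, i.e.\ the variables of group $a+1$ lie in an earlier factor than those of group $a$. Then the product of cross $\zeta$-factors $\zeta_{ij}(x^{(\beta,a+1)}_{i,*}-x^{(\beta,a)}_{j,*})$ contains a zero. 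This is exactly the mechanism behind the factor $\frac{w_{\beta,s}-w_{\beta,r}+(\beta,\beta)/2}{w_{\beta,s}-w_{\beta,r}}$ in \eqref{hlp-rat}, so that term vanishes on the locus.
\item Otherwise the groups appear in increasing order. Then the pair $(1,p)$ produces the vanishing factor, via the cyclic relation $w_{\beta,1}-w_{\beta,p}=(p-1)\tfrac{(\beta,\beta)}{2}\equiv-\tfrac{(\beta,\beta)}2 \pmod p$. This is precisely the argument of Proposition~\ref{prop:rank1-wheel}.
\end{itemize}
This case split covers the situation where all $p$ groups are specialized inside distinct factors. If several groups of the same $\beta$ land inside one factor $\Psi_{\kk}(e_{\beta'}(r))$, the factor has grading $\beta'$ and cannot contain two full copies of $\beta$ unless the gradings force it. In that case $\phi$ of that piece vanishes for degree reasons, as in Lemma~\ref{lem:spefiltration}.

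\emph{Step 3: the case $\deg h=\unl d$.} Here Lemma~\ref{lem:spepsipbwbasis} gives $\xi_{\unl d}(\Psi_{\kk}(E_h))\doteq\prod_{\beta<\beta'}G_{\beta,\beta'}\prod_\beta\hat P_{\lambda_{h,\beta}}$. Each $\hat P_{\lambda_{h,\beta}}$, after the shift $w\mapsto w-\kappa_\beta$, is a rank~1 shuffle product with $\hbar=(\beta,\beta)/2\neq0$ in $\kk$ (using $p>3$). By Proposition~\ref{prop:rank1-wheel} it vanishes on \eqref{eq:wheel-higherrank} for that $\beta$, and hence the whole product does.

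\emph{Main obstacle.} The hard part is the bookkeeping in Step~2 for $\unl d\neq\deg h$. One must check that after dividing by $\prod G_\beta$ no cancelling zero in the denominators (poles $w_{\beta,s}-w_{\beta,r}$) destroys the vanishing. I would handle this as in the cited characteristic-zero proofs, by first clearing denominators: work with $\phi_{\unl d}$ of the numerator and note that the $G_\beta$ do not vanish identically on the wheel locus. Then vanishing of $\phi_{\unl d}$ on the locus implies vanishing of $\xi_{\unl d}$ by continuity, i.e.\ polynomial divisibility.
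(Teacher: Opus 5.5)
Your Step~1 reduction is fine, and so is your treatment of the cases $\unl d<\deg h$ (Lemma~\ref{lem:spefiltration}) and $\unl d=\deg h$ (Lemma~\ref{lem:spepsipbwbasis} plus Proposition~\ref{prop:rank1-wheel}, using $\hbar=(\beta,\beta)/2\neq 0$ in $\kk$). The gap is in the remaining case $\unl d>\deg h$, which carries all the content.

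The fix you propose there rests on a false claim. The factors $G_\beta$ \emph{do} vanish identically on the locus \eqref{eq:wheel-higherrank} as soon as $|\beta|\geq 2$. They come from the Serre-type wheel conditions between two groups of the same root. In type $A$, $G_\beta$ contains $w_{\beta,s}-w_{\beta,s'}-1$ for every $s\neq s'$. For instance, in $A_2$ with $\beta=[1,2]$, a direct computation gives $\phi_{\unl d}(\Psi(e_\beta(0)^2))\doteq(w_{\beta,1}-w_{\beta,2}-1)(w_{\beta,2}-w_{\beta,1}-1)$. On the locus one has $w_{\beta,1}=w_{\beta,2}+1$.

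Consequently $\phi_{\unl d}(F)$ vanishes on the locus for \emph{every} $F\in W^{(n)}_{\unl k}(\kk)$. Exhibiting a zero in each summand of $\phi_{\unl d}$ therefore says nothing about $\xi_{\unl d}(F)$. Your ``right order'' summands vanish there too: if a colour-$\ell$ variable of group $(\beta,a)$ precedes a colour-$(\ell-1)$ variable of group $(\beta,a+1)$, the factor $\zeta_{\ell,\ell-1}(\tfrac12)=0$ appears. What must be shown is that each summand vanishes along the locus to strictly higher order than $\prod_\beta G_\beta$ does.

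Your case split has a second problem. It presupposes that each group $(\beta,s)$ sits inside a single factor $\Psi_\kk(e_{\beta_t}(r_t))$. When $\unl d>\deg h$, a group is typically spread over several factors and interleaved with other groups. Then ``group $a+1$ lies in an earlier factor'' is undefined, and the dismissal ``for degree reasons'' is unsupported.

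To close the gap, drop the PBW case split and work directly with the spanning products $x^{r_1}_{i_1,1}\star\cdots\star x^{r_m}_{i_m,1}$. Count orders of vanishing summand by summand, where a summand corresponds to a total order $\prec$ on the variables. In type $A$ this goes as follows.
\begin{itemize}
\item Take $\beta=[i,j]$. A summand survives the specialization only if, within each group, the colours occur in the order $j,j-1,\dots,i$.
\item For two groups with variables $u_\ell$ and $v_\ell$, the exponent of $w_{\beta,a}-w_{\beta,b}-1$ in the summand is $\#\{\ell>i: u_\ell\prec v_{\ell-1}\}+\#\{\ell: v_\ell\prec u_\ell\}$. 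This is $\geq|\beta|-1$, and $\geq|\beta|$ when $v_i\prec u_i$. No pole lies on that hyperplane.
\item Hence each summand is divisible by $G_\beta$ on its own, and the quotient keeps a zero along $w_{\beta,a}=w_{\beta,b}+1$ whenever $v_i\prec u_i$.
\item On the locus, all cyclically consecutive pairs $(1,2),\dots,(p-1,p),(p,1)$ satisfy $w_{\beta,a}-w_{\beta,a+1}=1$, since $p-1\equiv -1$.
\item The colour-$i$ variables of the $p$ groups cannot increase all the way around the cycle. So some consecutive pair supplies the extra zero.
\end{itemize}
This is the correct form of your cyclic argument: it is applied to one fixed colour, and only after dividing by $G_\beta$.

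In types $B$, $C$, $D$ the same count must be run through the two-step specializations $\phi^{(2)}\circ\phi^{(1)}$. There the division by $B_\beta$ makes termwise divisibility more delicate. That is what the characteristic-zero arguments the paper cites for this proposition are meant to supply.
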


For any $h \colon  \Delta^{+}\times\NN\rightarrow \NN$ with finite support, we say $h$ is {\em $p$-restricted} if $ h(\beta,r)<p$ for all $(\beta,r)\in\Delta^{+}\times \NN$. Denote the set of $p$-restricted functions $h$ by $\mathcal{H}^{p}$, and the set of functions in  $\mathcal{H}^{p}$ with grading $\unl{k}$ by $\mathcal{H}^{p}_{\unl{k}}$.  
We consider the $p$-restricted ordered monomials
\begin{equation}\label{eq:prestrictedordered} 
 E_{h}\, =\prod_{(\beta,r)\in\Delta^{+}\times\mathbb{N}}\limits^{\rightarrow}e_{\beta}(r)^{h(\beta,r)},\qquad h\in\mathcal{H}^{p}
 \end{equation}
as \eqref{eq:pbwd-yangian}, for the Yangian $Y^>_n(\kk)$.
Based on Theorem \ref{thm:rank1basis}, and the analogue of Lemmas \ref{lem:spepsipbwbasis}--\ref{lem:spefiltration}, the following  analogue of Lemma \ref{prop:mainpropertyspe} holds (arguing in the same way as \cite[Section 3.2]{Tsy21}):
\begin{Lemma}\label{lem:charpspan}
 Let $W'_{\unl{k}}(\kk)$ be the subspace of $W^{(n)}_{\unl{k}}(\kk)$ spanned by $\{\Psi_{\kk}(E_{h})\}_{h\in \mathcal{H}^{p}_{\unl{k}}}$. For any $F\in \tilde{W}^{(n)}_{\unl{k}}(\kk)$ and $\unl{d}\in \text{\rm KP}(\unl{k})$, if $\phi_{\unl{d}'}(F)=0$ for all
$\unl{d}'\in \text{\rm KP}(\unl{k})$ such that $\unl{d}'<\unl{d}$, then there exists $F_{\unl{d}}\in W'_{\unl{k}}(\kk)$
such that $\phi_{\unl{d}}(F)=\phi_{\unl{d}}(F_{\unl{d}})$ and $\phi_{\unl{d}'}(F_{\unl{d}})=0$ for all $\unl{d}'<\unl{d}$.
\end{Lemma}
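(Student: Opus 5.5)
The argument follows the characteristic-zero proof of Lemma \ref{prop:mainpropertyspe}. The only new ingredient is to replace the $\CC$-basis of Hall--Littlewood polynomials by the $p$-restricted basis from Theorem \ref{thm:rank1basis}.

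Fix $F\in\tilde W^{(n)}_{\unl{k}}(\kk)$ and $\unl{d}$ with $\phi_{\unl{d}'}(F)=0$ for all $\unl{d}'<\unl{d}$. The first step is a divisibility statement. Exactly as in the proofs cited for Lemma \ref{lem:span}, the vanishing of the lower specializations together with the wheel conditions forces $\phi_{\unl{d}}(F)$ to be divisible by $\prod_{\beta<\beta'}G_{\beta,\beta'}\cdot\prod_\beta G_\beta$. These arguments only analyse linear factors $w_{\beta,s}-c\,w_{\beta',s'}-a$ coming from $\zeta$-factors and wheel conditions. The constants are in $\ZZ[\frac12]$ and stay distinct modulo $p>3$ in the relevant range, so the arguments go through over $\kk$. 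We may therefore write $\xi_{\unl{d}}(F)=\prod_{\beta<\beta'}G_{\beta,\beta'}\cdot R$ with $R\in\kk[\{w_{\beta,s}\}]^{\Sigma_{\unl{d}}}$.

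The second step uses the defining condition of $\tilde W^{(n)}_{\unl{k}}(\kk)$. Since $G_{\beta,\beta'}$ only involves pairs from different root groups, the vanishing \eqref{eq:wheel-higherrank} of $\xi_{\unl{d}}(F)$ on each single-root wheel locus transfers to $R$, provided $\prod G_{\beta,\beta'}$ does not vanish identically on that locus. This holds because the locus constrains only the variables $w_{\beta,\cdot}$ and leaves all other groups free. Hence $R$ satisfies, separately for each $\beta$, the wheel condition \eqref{eq:wheelcharp} in the variables $w_{\beta,1},\dots,w_{\beta,d_\beta}$ with $\hbar=\frac{(\beta,\beta)}{2}$, which is nonzero in $\kk$.

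The third step applies the rank-one result. After the shift $w_{\beta,s}\mapsto w_{\beta,s}-\kappa_\beta$, which preserves the wheel locus, Theorem \ref{thm:rank1basis} shows that the space of symmetric polynomials in one group satisfying this condition has basis the $P_{\lambda}$ with $\lambda$ $p$-restricted. Since $\kk[\{w_{\beta,s}\}]^{\Sigma_{\unl{d}}}=\bigotimes_\beta\kk[w_{\beta,\cdot}]^{\Sigma_{d_\beta}}$ and the wheel conditions are imposed factorwise, the subspace satisfying all of them is the tensor product of the factorwise subspaces. This tensor-product description is the step I expect to need the most care. Concretely, I would expand $R$ in the basis $\{m_{\lambda}\}$ of one tensor factor with coefficients in the others, and apply the condition one factor at a time. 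It follows that $R$ is a $\kk$-combination of $\prod_\beta \hat P_{\lambda_{h,\beta}}$, equivalently of $\prod_\beta P_{\lambda_{h,\beta}}$ up to units since $\mathrm{mul}<p!$ is invertible, over $h\in\mathcal H^p_{\unl{k},\unl{d}}$.

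Finally, the characteristic-$p$ analogue of Lemma \ref{lem:spepsipbwbasis} gives $\phi_{\unl{d}}(\Psi_\kk(E_h))\doteq\prod G_{\beta,\beta'}\prod G_\beta\prod\hat P_{\lambda_{h,\beta}}$. So the corresponding linear combination $F_{\unl{d}}$ of the $\Psi_\kk(E_h)$, $h\in\mathcal H^p_{\unl{k},\unl{d}}$, lies in $W'_{\unl{k}}(\kk)$ and satisfies $\phi_{\unl{d}}(F_{\unl{d}})=\phi_{\unl{d}}(F)$. The analogue of Lemma \ref{lem:spefiltration} gives $\phi_{\unl{d}'}(F_{\unl{d}})=0$ for $\unl{d}'<\unl{d}$, which completes the proof.
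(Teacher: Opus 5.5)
Your proposal is correct and follows the paper's route: Tsymbaliuk's filtration argument, with Theorem~\ref{thm:rank1basis} supplying the $p$-restricted Hall--Littlewood basis in each root group. You also make explicit two steps the paper leaves implicit, namely passing the $p$-wheel condition from $\xi_{\unl{d}}(F)$ to $R$ and the tensor-product decomposition over root groups.

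Two justifications should be tightened. First, $\mathrm{mul}(\lambda)=\prod_i m_i(\lambda)!$ is a unit in $\kk$ because every $m_i(\lambda)<p$, not because $\mathrm{mul}<p!$. Second, the carry-over of divisibility by $\prod G_{\beta,\beta'}\prod G_\beta$ should rest, as in the paper, on the wheel and $\zeta$-factor structure of the characteristic-zero arguments, not on distinctness of constants modulo $p$. That distinctness fails for large rank: for example, the constants $n-\ell-\frac32$ in $B_\beta$ repeat modulo $p$ once $n-j>p$. It is also not what those proofs use, since repeated linear factors already occur in $B_\beta$ in characteristic zero.
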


Thus we obtain $\mathrm{im}(\Psi_{\kk})=\tilde{W}^{(n)}(\kk)$, and:

\begin{Proposition}\label{prop:impsikgeneral}
The $\kk$-algebra homomorphism $\Psi_{\kk}$ of \eqref{eq:psi-modular} gives rise to a surjective map 
\begin{equation}\label{eq:surjectivemap}
 \Psi_{\kk}\colon \ Y^{>}_n(\kk)\twoheadrightarrow \tilde{W}^{(n)}(\kk).
 \end{equation}
 Moreover, $\{\Psi_{\kk}(E_h)\}_{h\in\mathcal{H}^{p}}$ form a basis of $\tilde{W}^{(n)}(\kk)$.
\end{Proposition}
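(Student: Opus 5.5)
Two things must be shown: that $\Psi_{\kk}(Y_n^>(\kk))$ equals $\tilde W^{(n)}(\kk)$, and that the $p$-restricted monomials $\Psi_{\kk}(E_h)$, $h\in\mathcal H^p$, form a basis of it. The inclusion $\mathrm{im}(\Psi_{\kk})\subset\tilde W^{(n)}(\kk)$ is the preceding Proposition. For the reverse inclusion I use the triangularity of the specialization maps, as in the proof of Theorem~\ref{ShuffleIY-A}. Fix $\unl k$ and $F\in\tilde W^{(n)}_{\unl k}(\kk)$. Order the finite set $\mathrm{KP}(\unl k)$ by \eqref{eq:KP-order} and run a descending induction.

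For the smallest $\unl d$ the hypothesis of Lemma~\ref{lem:charpspan} is vacuous, so we get $F_{\unl d}\in W'_{\unl k}(\kk)$ with $\phi_{\unl d}(F-F_{\unl d})=0$. The difference $F-F_{\unl d}$ still lies in $\tilde W^{(n)}_{\unl k}(\kk)$: $F$ does by assumption, and $F_{\unl d}\in\mathrm{im}(\Psi_{\kk})\subset\tilde W^{(n)}(\kk)$. Since $F_{\unl d}$ vanishes under all smaller specializations, applying Lemma~\ref{lem:charpspan} repeatedly yields $G\in W'_{\unl k}(\kk)$ with $\phi_{\unl d}(F-G)=0$ for every $\unl d\in\mathrm{KP}(\unl k)$.

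The step needing care is showing that a function killed by all $\phi_{\unl d}$ is zero. I would check this via the Kostant partition $\unl d$ consisting only of simple roots: then $\phi_{\unl d}$ is the identity on the numerator $f$, so $\phi_{\unl d}(F)=0$ forces $f=0$. In the lexicographic order this $\unl d$ is maximal, so the induction reaches it. Hence $F=G\in\mathrm{im}(\Psi_{\kk})$, which proves the equality and also shows that $\{\Psi_{\kk}(E_h)\}_{h\in\mathcal H^p}$ spans.

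For linear independence, suppose $\sum_{h\in\mathcal H^p_{\unl k}}a_h\Psi_{\kk}(E_h)=0$ with not all $a_h$ zero, and let $\unl d$ be the smallest degree among the $h$ with $a_h\neq0$. By the modular version of Lemma~\ref{lem:spefiltration}, $\phi_{\unl d}$ kills every term with $\deg h>\unl d$. Applying $\xi_{\unl d}$ and using Lemma~\ref{lem:spepsipbwbasis} gives
\[
\sum_{\deg h=\unl d}a_h' \prod_{\beta\in\Delta^+}\hat P_{\lambda_{h,\beta}}=0,
\]
where the common factor $\prod_{\beta<\beta'}G_{\beta,\beta'}$ has been cancelled. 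That factor is nonzero over $\kk$: it is a product of linear factors whose constants lie in $\ZZ[\frac12]$, and since the coefficient of $w_{\beta,s}$ is $1$ these factors do not vanish mod $p$. The coefficients $a_h'$ are nonzero multiples of $a_h$; in particular the scalars $\pm2^t$ from \eqref{eq:doteqmean} are units because $p>3$.

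The main obstacle is the independence of the products $\prod_\beta P_{\lambda_{h,\beta}}$ over $\kk$. Here $\hat P_\lambda=\mathrm{mul}(\lambda)P_\lambda$ is taken in the variables $w_{\beta,s}-\kappa_\beta$ with $\hbar=(\beta,\beta)/2$. For $p$-restricted $h$ the partition $\lambda_{h,\beta}$ is $p$-restricted, so $\mathrm{mul}(\lambda_{h,\beta})$ is invertible in $\kk$. Theorem~\ref{thm:rank1basis} then says the $P_\lambda$ for $p$-restricted $\lambda$ are linearly independent, and a shift of variables preserves this. Because the variables $w_{\beta,s}$ for different $\beta$ are disjoint, tensor products of such bases remain independent. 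This forces all $a_h=0$, a contradiction, so the family is a basis.
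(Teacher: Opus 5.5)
Your proposal is correct and follows the paper's route. You combine the inclusion $\mathrm{im}(\Psi_{\kk})\subset\tilde W^{(n)}(\kk)$ with iterated use of Lemma~\ref{lem:charpspan} over $\mathrm{KP}(\unl k)$ for spanning, and use the triangularity from Lemmas~\ref{lem:spepsipbwbasis}--\ref{lem:spefiltration} together with the rank-one Hall--Littlewood basis of Theorem~\ref{thm:rank1basis} for independence, filling in details the paper leaves implicit. Two minor slips do not affect the argument: the induction you run is ascending, not descending, in the order \eqref{eq:KP-order}, and the simple-root specialization is a shift of variables rather than the identity.
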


\subsection{The small Yangian $\bar{y}^{>}_n(\kk)$}\label{subsection: the subalgebra}
To determine the kernel $\mathrm{ker}(\Psi_{\kk})$, we need another version of modular Yangian $\bar{Y}^{>}_n(\kk)$  constructed    from the integral form $\Y^{>}_n$ and ``reduction modulo $p$''. 
Since $2$ is invertible in $\kk$, there is a natural ring homomorphism $\eta\colon \ZZ[\frac{1}{2}]\rightarrow \kk$, which gives rise to the $\kk$-algebra  $\bar{Y}^{>}_n(\kk)\coloneqq \Y^{>}_n\otimes_{\ZZ[\frac{1}{2}]} \kk$. For any $a\in \Y^{>}_n$, we use $\bar{a}\coloneqq a\otimes 1$ to denote the corresponding element in $\bar{Y}^{>}_n(\kk)$. Note that $\bar{Y}^{>}_n(\kk)$ is not isomorphic to $Y^{>}_n(\kk)$, as $\bar{e}_{\beta}(r)^p=0$ (cf. Proposition \ref{prop:dividedpoweresebetar}) in $\bar{Y}^{>}_n(\kk)$ for any $(\beta,r)\in \Delta^{+}\times \NN$. 

Recall the integral form $\W^{(n)}$ of shuffle algebra from Subsection \ref{subsection:integralformddd},  analogously we can form the $\kk$-algebra $ \W^{(n)}\otimes_{\ZZ[\frac{1}{2}]} \kk$.  The $\ZZ[\frac{1}{2}]$-algebra isomorphism $\Psi\colon \Y_n^> \stackrel{\sim}{\longrightarrow} \W^{(n)}$ (cf. Theorem \ref{ShuffleIY-A}) gives rise to a $\kk$-algebra isomorphism  
\[\bar{\Psi} \colon \bar{Y}_n^>(\kk) \stackrel{\sim}{\longrightarrow} \W^{(n)}\otimes_{\ZZ[\frac{1}{2}]}\kk.\]
 Analogously to the Lie algebra case (cf. \cite{Hum77}), consider the subalgebra $\bar{y}^{>}_n(\kk)$ of $\bar{Y}^{>}_n(\kk)$ generated by  the elements
\[\{\bar{e}_{\beta}(r)^{(t)};\   \beta\in\Delta^{+}, r\in\NN, 0\leq t\leq p-1\},\]
cf. \eqref{def:divided power:positive root},
and we will call $\bar{y}^{>}_n(\kk)$ the \emph{small Yangian}.
We know $\bar{y}^{>}_n(\kk)$ is also generated by $\bar{e}_{i,r}\ (1\leq i\leq n, r\in\NN)$. Thus the $\kk$-algebra isomorphism  $\bar{\Psi} \colon \bar{Y}_n^>(\kk) \stackrel{\sim}{\longrightarrow} \W^{(n)}\otimes_{\ZZ[\frac{1}{2}]}\kk$ gives rise to a $\kk$-algebra isomorphism   \begin{equation}\label{eq:shufflely}
 \bar{\Psi} \colon \bar{y}_n^>(\kk) \stackrel{\sim}{\longrightarrow} \bar{W}^{(n)}(\kk),   
\end{equation}
where $\bar{W}^{(n)}(\kk)$ is the $\kk$-subalgebra of  $W^{(n)}\otimes_{\ZZ[\frac{1}{2}]}\kk$ generated by $x^r_{i,1}\otimes 1\ (1\leq i\leq n,r\in\NN)$.

\begin{Proposition}\label{prop:pbwsmallyangian}
(1) The assignment $f\otimes \lambda\mapsto \lambda\cdot \eta(f)$ gives rise to a $\kk$-algebra isomorphism 
 \begin{equation}\label{eq:isoshuffleoverkreduction}
 \phi\colon\ \bar{W}^{(n)}(\kk)\stackrel{\sim}{\longrightarrow} \tilde{W}^{(n)}(\kk),
   \end{equation}
where $\eta(f)\in \tilde{W}^{(n)}(\kk)$ is given by changing the coefficients from  $\ZZ[\frac{1}{2}]$ to $\kk$ with the map $\eta$.   

\smallskip

\noindent (2) Let $\bar{E}_h\coloneqq E_h\otimes 1$ be the corresponding ordered monomials in $\bar{Y}^{>}_n(\kk)$, cf. \eqref{eq:pbwd-yangian}. Then the elements $\{\bar{E}_{h}\}_{h\in\mathcal{H}^{p}}$  form a basis of $\bar{y}^>_n$ over $\kk$.
\end{Proposition}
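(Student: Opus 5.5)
The plan is to compare the two bases: the $\ZZ[\frac12]$-basis $\{\Psi(\E_h)\}_{h\in\mathcal H}$ of $\W^{(n)}$ from Theorem \ref{ShuffleIY-A}, and the $\kk$-basis $\{\Psi_\kk(E_h)\}_{h\in\mathcal H^p}$ of $\tilde W^{(n)}(\kk)$ from Proposition \ref{prop:impsikgeneral}.

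\textbf{Part (1): $\phi$ is well defined.} First I check that the reduction map $\eta$ is compatible with the shuffle product. Every $F\in\W^{(n)}$ has numerator with $\ZZ[\frac12]$-coefficients and the same fixed denominator \eqref{polecondition}. The shuffle product \eqref{shuffle product-1} is a sum over shuffles, with no division by factorials, and the $\zeta$-factors have coefficients in $\ZZ[\frac12]$. Hence $\eta$ defines a ring map $\W^{(n)}\otimes_{\ZZ[\frac12]}\kk\to W^{(n)}(\kk)$. Pole conditions are preserved, and wheel conditions are preserved because they are vanishing conditions on polynomials with $\ZZ[\frac12]$-coefficients at points defined over $\ZZ[\frac12]$. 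This map sends $x_{i,1}^r\otimes1\mapsto x_{i,1}^r$. Restricted to $\bar W^{(n)}(\kk)$, it is therefore an algebra map whose image is the subalgebra generated by the $x_{i,1}^r$, namely $\operatorname{im}\Psi_\kk=\tilde W^{(n)}(\kk)$ by Proposition \ref{prop:impsikgeneral}. So $\phi$ is surjective.

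\textbf{Spanning set for $\bar W^{(n)}(\kk)$.} Since $\bar y^>_n(\kk)$ is generated by the $\bar e_{i,r}$, $\bar W^{(n)}(\kk)=\bar\Psi(\bar y^>_n(\kk))$. The ordinary monomials $E_h=\prod e_\beta(r)^{h(\beta,r)}$ lie in $\Y^>_n$ and equal $\prod h(\beta,r)!\,\E_h$. In $\bar Y^>_n(\kk)$ we have $\bar e_\beta(r)^p=p!\,\bar e_\beta(r)^{(p)}=0$, and the $\bar e_\beta(r)$ lie in $\bar y^>_n$. I want to show that $\bar y^>_n(\kk)$ is spanned by $\{\bar E_h\}_{h\in\mathcal H^p}$. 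For this I use the straightening argument of Theorem \ref{thm:yangian-basisoverk}, now carried out over $\ZZ[\frac12]$. Any product of generators $e_{i,r}$ in $\Y^>_n$ is a $\ZZ[\frac12]$-combination of $\E_h$. Lifting to $Y^>_n(\CC)$, such a product is a $\ZZ$-combination-like expansion in the ordinary $E_h$, since integral straightening only produces integer multiples of ordinary monomials. Reducing mod $p$ kills any $E_h$ with some $h(\beta,r)\ge p$. I expect this to be the main obstacle: the coefficients of ordinary monomials $E_h$ in the expansion need not be integral a priori. To bypass it, I work on the shuffle side. Writing a product of generators as $\sum a_h\,\E_h$ and applying $\phi$ gives elements of $\tilde W^{(n)}(\kk)$. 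I would then argue by dimension instead of by explicit straightening.

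\textbf{Dimension count.} Fix a grading $\unl k$. The space $\W^{(n)}_{\unl k}$ is free with basis $\Psi(\E_h)$, $h\in\mathcal H_{\unl k}$, which is infinite but finite in each loop-degree piece; so I filter by total polynomial degree. The images $\eta(\Psi(\E_h))$ for $h\in\mathcal H^p$ equal $\Psi_\kk(E_h)/\prod h!$, and these are linearly independent by Proposition \ref{prop:impsikgeneral}. Hence $\{\bar E_h\}_{h\in\mathcal H^p}$ is linearly independent in $\bar y^>_n$, and $\phi$ maps their span isomorphically onto $\tilde W^{(n)}(\kk)$.

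For injectivity of $\phi$ on all of $\bar W^{(n)}(\kk)$, I show that $\bar W^{(n)}(\kk)$ is spanned by these elements. Take $G\in\bar W^{(n)}_{\unl k}(\kk)$ of a fixed degree. The element $G-\sum c_h\bar\Psi(\bar E_h)$ can be arranged to lie in $\ker\phi$. For $F\in\bar W^{(n)}(\kk)$ in $\ker\phi$, I apply the specialization maps $\phi_{\unl d}$ over $\kk$ together with Lemma \ref{lem:spefiltration}. Triangularity of $\phi_{\unl d}(\Psi(\E_h))$ from \eqref{spe-bfEh} and the Hall--Littlewood basis \eqref{eq:plambdamlambda} show that $\ker(\W^{(n)}\otimes\kk\to W^{(n)}(\kk))$ is spanned by the $\bar\Psi(\bar\E_h)$ with some $h(\beta,r)\ge p$. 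Reduction of $P_\lambda$ for non-$p$-restricted $\lambda$ is not automatically zero, so I use the equality $\bar E_h=\prod h!\,\bar\E_h=0$ for such $h$ instead. Thus any $F$ in $\bar W^{(n)}(\kk)$ differs from a combination of $\bar\Psi(\bar E_h)$, $h\in\mathcal H^p$, by a combination of $\bar\Psi(\bar\E_h)$ with non-restricted $h$.

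The remaining point is to show that such a difference, lying in the subalgebra generated by the $x^r_{i,1}$, must vanish. I would obtain this from the grading-by-grading dimension equality, using the rank-one Theorem \ref{thm:rank1basis} applied factorwise to each $\beta$ through the reduced specializations $\xi_{\unl d}$. This gives that $\phi$ is injective, proving (1). It also gives (2): $\{\bar E_h\}_{h\in\mathcal H^p}$ is a basis of $\bar y^>_n$, being the preimage under the isomorphism $\phi\circ\bar\Psi$ of the basis of Proposition \ref{prop:impsikgeneral}.
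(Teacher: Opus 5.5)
Your argument that $\phi$ is surjective is correct, and so is your proof that $\{\bar E_h\}_{h\in\mathcal H^{p}}$ is linearly independent (via $\phi(\bar\Psi(\bar E_h))=\Psi_{\kk}(E_h)$ and Proposition~\ref{prop:impsikgeneral}). Both match the paper.

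The gap is the other half: you do not show that $\bar y^>_n(\kk)$ is \emph{spanned} by the $p$-restricted $\bar E_h$, which is equivalent to injectivity of $\phi$. Your bypass rests on the claim that $\ker(\W^{(n)}\otimes\kk\to W^{(n)}(\kk))$ is spanned by the $\Psi(\E_h)\otimes1$ with $h\notin\mathcal H^{p}$. This is false, because that kernel is zero. Already in rank one, $\W^{(1)}_k\otimes\kk=\kk[x_1,\dots,x_k]^{\Sigma_k}$ and the reduction map is an isomorphism. For instance, $\Psi(\e_{1,0}^{(p)})=1$ by Lemma~\ref{k times of r-powerof x1}, and its reduction is $1\neq0$. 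In general, the triangularity you invoke, applied at a minimal $\unl{d}$, actually proves injectivity: by \eqref{eq:plambdamlambda}, the reductions of \emph{all} $P_\lambda$, restricted or not, form a $\kk$-basis.

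Your ``remaining point'' is that $\bar W^{(n)}(\kk)$ meets $\operatorname{span}\{\Psi(\E_h)\otimes1: h\notin\mathcal H^{p}\}$ trivially. Since $\W^{(n)}\otimes\kk$ is the direct sum of the restricted and non-restricted spans, this is exactly equivalent to the spanning statement. So the appeal to a ``dimension equality'' is circular: the dimensions of the pieces of $\bar W^{(n)}(\kk)$ are precisely what is unknown.

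The missing idea is to straighten in $Y^>_n(\kk)$ rather than in $\Y^>_n$; this removes the integrality obstacle you identified. The relations \eqref{eq:Drinfeld-rel1}--\eqref{eq:Drinfeld-rel2} have $\ZZ[\frac12]$-coefficients and hold in $\Y^>_n$. Hence $e_{i,r}\mapsto\bar e_{i,r}$ defines a surjection $\varphi\colon Y^>_n(\kk)\twoheadrightarrow\bar y^>_n(\kk)$ with $\varphi(E_h)=\bar E_h$. By Theorem~\ref{thm:yangian-basisoverk}, the $E_h$ ($h\in\mathcal H$) span $Y^>_n(\kk)$. Moreover $\bar E_h=0$ whenever some $h(\beta,r)\geq p$, because $\bar e_\beta(r)^p=\e_\beta(r)^{(p)}\otimes p!=0$ by Proposition~\ref{prop:dividedpoweresebetar}. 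This gives spanning at once. Combined with your independence argument, it yields (2) and the injectivity of $\phi$; this is exactly the paper's proof.

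Alternatively, you could prove directly that the reduction is injective on all of $\W^{(n)}\otimes\kk$. The module $\W^{(n)}_{\unl{k}}$ is saturated in the free $\ZZ[\frac12]$-module of symmetric numerators, because the wheel conditions are $\CC$-linear. So the quotient is torsion-free, hence flat over the PID $\ZZ[\frac12]$. Then $\phi$ is injective as a restriction, and (2) follows from Proposition~\ref{prop:impsikgeneral}.
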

\begin{proof}
On the one hand,  $\bar{y}^{>}_n(\kk)$ is generated by $\bar{e}_{i,r}\ (1\leq i\leq n, r\in\NN)$, thus the assignment $e_{i,r}\mapsto \bar{e}_{i,r}$ gives a surjective $\kk$-algebra homomorphism
\begin{equation}\label{eq:mapvarphi}
   \varphi\colon Y^{>}_n(\kk)\twoheadrightarrow \bar{y}^{>}_n(\kk). 
\end{equation}
From Theorem \ref{thm:yangian-basisoverk} and that $\varphi(e_{\beta}(r)^{p})=\bar{e}_{\beta}(r)^{p}=0$, we know $\{\bar{E}_{h}\}_{h\in\mathcal{H}^{p}}$  form a spanning set of $\bar{y}^>_n$ over $\kk$. On the other hand, from subsection \ref{subs:impsikk}, $\tilde{W}^{(n)}(\kk)$ is the subalgebra of $W^{(n)}(\kk)$ generated by $x^{r}_{i,1}\ (1\leq i\leq n, r\in\NN)$ over $\kk$.  Since shuffle product $\star$ is compatible with the map $\eta\colon \ZZ[\frac{1}{2}]\rightarrow \kk$, and $\phi(x^r_{i,1}\otimes 1)=x^r_{i,1}$,  $\phi$ is also a surjective $\kk$-algebra homomorphism. Thus we obtain a surjective map $\phi\circ \bar{\Psi}\colon \bar{y}^>_n \twoheadrightarrow \tilde{W}^{(n)}(\kk)$. Now it follows from 
\[\phi(\bar{\Psi}(\bar{E}_h))=\Psi_{\kk}(E_h)\]
 and Proposition \ref{prop:impsikgeneral} that $\{\bar{E}_{h}\}_{h\in\mathcal{H}^{p}}$ is also linearly independent in $\bar{y}^>_n $. Hence $\{\bar{E}_{h}\}_{h\in\mathcal{H}^{p}}$  form a basis of $\bar{y}^>_n$ over $\kk$,  and $\phi$ is an isomorphism.
\end{proof}

 Combining Propositions \ref{prop:impsikgeneral}--\ref{prop:pbwsmallyangian} and the isomorphism \eqref{eq:shufflely}, we obtain the following commutative diagram
\[
\begin{tikzcd}
Y^{>}_n(\kk) \arrow[d, "\varphi"', two heads] \arrow[r, "\Psi_{\kk}", two heads] & \tilde{W}^{(n)}(\kk)  \\
\bar{y}^{>}_n(\kk) \arrow[r, "\bar{\Psi}"', "\cong"]                             & \bar{W}^{(n)}(\kk) \arrow[u, "\phi", "\cong"']
\end{tikzcd},
\]
such that $\Psi_{\kk}=\phi\circ \bar{\Psi}\circ \varphi$. Recall the ideal $I_p(Y_n^>(\kk))$ and the quotient $Y_n^{>,[p]}$ from \eqref{eq:defresyangian}.  Since $\varphi(e_\beta(r)^p)=\bar e_\beta(r)^p=0$, we have $I_p(Y_n^>(\kk))\subset\ker(\Psi_\kk)$.  Every non-$p$-restricted PBW monomial contains a consecutive factor $e_\beta(r)^p$, so the classes
\[\hat{E}_{h}\coloneqq \prod_{(\beta,r)\in\Delta^{+}\times\mathbb{N}}\limits^{\rightarrow}\hat{e}_{\beta}(r)^{h(\beta,r)},\qquad h\in\mathcal{H}^{p}\]
span $Y_n^{>,[p]}$, where $\hat{e}_{\beta}(r)$ is the class of $e_{\beta}(r)$.  Proposition~\ref{prop:impsikgeneral} says that their images $\Psi_\kk(E_h)$ form a basis of $\tilde W^{(n)}(\kk)$.  Hence the classes $\hat E_h$ are linearly independent as well.  Thus they form a basis of $Y_n^{>,[p]}$ and $\ker(\Psi_{\kk})=I_p(Y_n^{>}(\kk))$.  This argument does not use $p$-centrality and therefore also applies in type $C$.  We obtain our main result:

\begin{Theorem}\label{maintheorem}
The $\kk$-algebra homomorphism 
 $\Psi_{\kk}\colon Y^{>}_n(\kk)\longrightarrow W^{(n)}(\kk)$ of \eqref{eq:psi-modular} gives rise to  a $\kk$-algebra isomorphism $\bar{\Psi}_{\kk}\colon Y^{>,[p]}_n \stackrel{\sim}{\longrightarrow} \tilde{W}^{(n)}(\kk)$, and we have the following commutative diagram of $\kk$-algebra isomorphisms:
 \[
\begin{tikzcd}
Y^{>,[p]}_n \arrow[d, "\bar{\varphi}", "\cong"'] \arrow[r, "\bar{\Psi}_{\kk}", "\cong"'] & \tilde{W}^{(n)}(\kk)  \\
\bar{y}^{>}_n(\kk)  \arrow[r, "\bar{\Psi}"', "\cong"]                            & \bar{W}^{(n)}(\kk) \arrow[u, "\phi", "\cong"']
\end{tikzcd}
\]
\end{Theorem}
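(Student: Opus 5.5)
The plan is to assemble Theorem~\ref{maintheorem} from the factorization $\Psi_{\kk}=\phi\circ\bar{\Psi}\circ\varphi$ together with the basis statements already established; no new specialization computations should be needed. First I check $I_p(Y_n^>(\kk))\subset\ker(\Psi_\kk)$ and $I_p(Y_n^>(\kk))\subset\ker(\varphi)$. By Proposition~\ref{prop:dividedpoweresebetar}, $e_\beta(r)^p=p!\,\e_\beta(r)^{(p)}$ in $\Y_n^>$, so its reduction vanishes in $\bar Y_n^>(\kk)$. Hence $\varphi(e_\beta(r)^p)=0$, and so $\Psi_\kk(e_\beta(r)^p)=0$ as well. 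Both $\varphi$ and $\Psi_\kk$ therefore descend to homomorphisms $\bar\varphi\colon Y_n^{>,[p]}\to\bar y_n^>(\kk)$ and $\bar\Psi_\kk\colon Y_n^{>,[p]}\to\tilde W^{(n)}(\kk)$, with $\bar\Psi_\kk=\phi\circ\bar\Psi\circ\bar\varphi$. Surjectivity of both follows from Proposition~\ref{prop:impsikgeneral} and from the fact that $\bar y_n^>(\kk)$ is generated by the $\bar e_{i,r}$.

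Next I show that the classes $\hat E_h$ with $h\in\mathcal H^p$ span $Y_n^{>,[p]}$. By Theorem~\ref{thm:yangian-basisoverk}, every element of $Y_n^>(\kk)$ is a combination of ordered monomials $E_h$. If $h(\beta,r)\ge p$ for some $(\beta,r)$, then $E_h$ contains the consecutive factor $e_\beta(r)^p$, since equal indices are adjacent in the ordered product. Thus $E_h=a\,e_\beta(r)^p\,b$ lies in $I_p$. This step is purely combinatorial, uses no centrality, and so also works in type $C$.

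For injectivity I use Proposition~\ref{prop:impsikgeneral}: the elements $\bar\Psi_\kk(\hat E_h)=\Psi_\kk(E_h)$, $h\in\mathcal H^p$, form a basis of $\tilde W^{(n)}(\kk)$. So a spanning family is sent bijectively onto a basis. It follows that the $\hat E_h$ are linearly independent, and $\bar\Psi_\kk$ is an isomorphism. In particular $\ker\Psi_\kk=I_p(Y_n^>(\kk))$. Finally, $\phi$ and $\bar\Psi$ are isomorphisms by Proposition~\ref{prop:pbwsmallyangian} and \eqref{eq:shufflely}, so $\bar\varphi=\bar\Psi^{-1}\circ\phi^{-1}\circ\bar\Psi_\kk$ is an isomorphism too, and the diagram commutes by construction. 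Alternatively, $\bar\varphi$ sends the basis $\hat E_h$ to the basis $\bar E_h$ of Proposition~\ref{prop:pbwsmallyangian}(2).

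The genuine content sits in the inputs, especially the equality $\mathrm{im}(\Psi_\kk)=\tilde W^{(n)}(\kk)$ with basis $\{\Psi_\kk(E_h)\}_{h\in\mathcal H^p}$. The main obstacle is the linear independence part of that statement, which rests on the rank-one Theorem~\ref{thm:rank1basis} combined with the triangularity of Lemmas~\ref{lem:spepsipbwbasis}--\ref{lem:spefiltration} over $\kk$. For the present theorem I take it as given. The only point I would double-check is that $\Psi_\kk(E_h)$ really equals $\phi(\bar\Psi(\bar E_h))$, i.e.\ that reduction modulo $p$ commutes with the shuffle product. This holds because the $\zeta$-factors have $\ZZ[\frac12]$ coefficients.
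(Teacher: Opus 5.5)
Your proposal is correct and follows essentially the same route as the paper: you factor $\Psi_{\kk}=\phi\circ\bar{\Psi}\circ\varphi$, use $\bar{e}_\beta(r)^p=0$ to get $I_p(Y_n^>(\kk))\subset\ker(\Psi_{\kk})$, and observe that the classes $\hat{E}_h$ with $h\in\mathcal{H}^p$ span $Y_n^{>,[p]}$ because every non-restricted ordered monomial contains a consecutive factor $e_\beta(r)^p$. You then get linear independence (hence $\ker(\Psi_{\kk})=I_p(Y_n^>(\kk))$, without any centrality, so type $C$ is included) from the basis statement of Proposition~\ref{prop:impsikgeneral}, and read off $\bar{\varphi}$ as an isomorphism from the diagram, exactly as the paper does.
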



\bigskip
\noindent
\textbf{Acknowledgment.}
We would like to thank Boris Feigin and Alexander Tsymbaliuk for their inspiring  work and for stimulating discussions. This work is supported by the National Natural 
Science Foundation of China (Nos. 12401028, 12671035) and
the Natural Science Foundation of Hubei Province (No. 2025AFB716).


\end{document}